\newcolumntype{C}{>{$}c<{$}} %Defines math mode in tabular
\theoremstyle:=definition,remark,plain\do{%
    \expandafter\g@addto@macro\csname th@\theoremstyle\endcsname{%
      \addtolength\thm@preskip{.5\baselineskip plus .2\baselineskip minus .2\baselineskip}
      \addtolength\thm@postskip{.5\baselineskip plus .2\baselineskip minus .2\baselineskip}
    }%
  }
\numberwithin{equation}{section}
\tikzset{%
	owt/.style={circle, fill=black, inner sep=2pt, outer sep=0pt, minimum size=5pt}, % a black weight
	wt/.style={circle, draw=black, fill=white, inner sep=2pt, outer sep=0pt, minimum size=5pt}, % a white weight
	nom/.style={rectangle,draw=black!20,fill=red!10,inner sep=2pt}, % for labelling diagrams
	axiscolour/.style={blue}, % axis colour (obviously)
	fillcolour/.style={fill=blue,fill opacity=0.3}, % fill colour for weight supports
	drawcolour/.style={blue,opacity=0.3,line width=0.2cm,text opacity=1} % draw colour for the C_{u,v} sets
}
\newcommand{\makeaxes}[2]{% #i is the \lambda_i label in the form "node[direction] {...}
	\draw[axiscolour] (-90:1) -- (90:1) #2;
	\draw[axiscolour] (-150:1) -- (30:1) #1;
	\draw[axiscolour] (-30:1) -- (150:1);
}
\newcommand{\thintriangle}[5]{% #1 is the distance from the origin to the marked vertex
	\begin{scope}[rotate around={#2:(0,0)},shift={(0,-#1)}]% #2 is the angle to rotate about the origin
		\draw[thick] (-120:1) -- (0:0) node[owt,label=#3] {} -- (-60:1);% #3 is the vertex label (direction:{...})
		\node at (0,-#4) {#5};% #4 is the module label's distance from the origin; #5 is the module label ($...$)
		\begin{scope}[shift={(0,-0.1)}]% now fill the triangle
			\fill[fillcolour] (-120:1) -- (0:0) -- (-60:1) -- cycle;
		\end{scope}
	\end{scope}
}
\newcommand{\thicktriangle}[5]{% #1 is the distance from the origin to the marked vertex
	\begin{scope}[rotate around={#2:(0,0)},shift={(0,-#1)}]% #2 is the angle to rotate about the origin
		\draw[thick] (-150:1) -- (0:0) node[owt,label=#3] {} -- (-30:1);% #3 is the vertex label (direction:{...})
		\node at (0,-#4) {#5};% #4 is the module label's distance from the origin; #5 is the module label ($...$)
		\begin{scope}[shift={(0,-0.06)}]% now fill the triangle
			\fill[fillcolour] (-150:1) -- (0:0) -- (-30:1) -- cycle;
		\end{scope}
	\end{scope}
}
\newcommand{\brick}[4]{% #1 is the distance from the origin to the marked vertex
	\begin{scope}[rotate around={#2:(0,0)},shift={(0,-#1)}]% #2 is the angle to rotate about the origin
		\draw[thick] (180:1) -- node[owt,label=#3,pos=#4] {} (0:1);% #3 is the vertex label (direction:{...}); #4 is position along line
		\begin{scope}[shift={(0,-0.06)}]% now fill the rectangle
			\fill[fillcolour] (-1,0) rectangle (1,-0.75);
		\end{scope}
	\end{scope}
}
\newcommand{\pd}{\partial}     % holomorphic partial d
\renewcommand{\ge}{\geqslant} % never use \geq or \geqslant, use \ge which is globally defined to be one or the other
\renewcommand{\le}{\leqslant} % same for \leq and \leqslant
\newcommand{\blank}{{-}}
\DeclareMathOperator{\vspn}{span}
\DeclareMathOperator{\Ind}{Ind}
\newcommand{\cc}{\mathsf{c}}   % central charge
\newcommand{\dd}{\mathrm{d}}   % d in derivatives and integrals
\newcommand{\ee}{\mathsf{e}}   % ln e = 1
\newcommand{\ii}{\mathfrak{i}} % imaginary unit
\newcommand{\kk}{\mathsf{k}}   % level
\newcommand{\qq}{\mathsf{q}}   % modular parameter
\newcommand{\uu}{\mathsf{u}}   % minimal model parameter
\providecommand{\vv}{\mathsf{v}}\renewcommand{\vv}{\mathsf{v}}   % minmod parameter [DR both needed as later newtx predefines \vv]
\newcommand{\yy}{\mathsf{y}}   % modular parameter
\newcommand{\zz}{\mathsf{z}}   % modular parameter
\newcommand{\fracuv}{\frac{\uu}{\vv}} % so lazy!
\newcommand{\wun}{\vvmathbb{1}}  % here used to indicate characteristic functions (requires newtxmath)
\DeclarePairedDelimiter{\brac}{\lparen}{\rparen}   % use \brac for (...) and \brac* to automatically scale the ( and )
\DeclarePairedDelimiter{\sqbrac}{\lbrack}{\rbrack} % use \sqbrac[\big] for \bigl(...\bigr) etc...
\DeclarePairedDelimiter{\set}{\lbrace}{\rbrace}
\newcommand{\st}{\mspace{5mu} {:} \mspace{5mu}}    % "such that" in sets
\DeclarePairedDelimiter{\abs}{\lvert}{\rvert}
\DeclarePairedDelimiter{\norm}{\lVert}{\rVert}
\DeclarePairedDelimiter{\ang}{\langle}{\rangle}
\DeclarePairedDelimiter{\no}{{\mspace{2mu}} {:} {\mspace{2mu}}}{{\mspace{2mu}} {:} {\mspace{2mu}}} % normal ordering (prevent := or =:) %SW I had to add extra {} around the mspace command to avoid compilation errors
\DeclarePairedDelimiterX{\comm}[2]{\lbrack}{\rbrack}{#1 , #2}  % commutators
\DeclarePairedDelimiterX{\acomm}[2]{\lbrace}{\rbrace}{#1 , #2} % anticommutators
\DeclarePairedDelimiterX{\inner}[2]{\langle}{\rangle}{#1 , #2} % scalar products
\DeclarePairedDelimiterX{\super}[2]{\lparen}{\rparen}{#1 \delimsize\vert \mathopen{} #2} % for super args (m|n)
\newcommand{\ira}{\hookrightarrow}    % for injections
\newcommand{\fld}[1]{\mathbb{#1}}    % for fields and related things
\newcommand{\alg}[1]{\mathfrak{#1}}  % for Lie algebras
\newcommand{\grp}[1]{\mathsf{#1}}    % for groups
\newcommand{\Mod}[1]{\mathcal{#1}}   % modules
\newcommand{\VOA}[1]{\mathsf{#1}}    % VOAs
\newcommand{\categ}[1]{\mathscr{#1}} % categories
\newcommand{\ZZ}{\fld{Z}}
\newcommand{\NN}{\ZZ_{\ge 0}} %\fld{N}} %D modified to eliminate confusion among uneducated people
\newcommand{\RR}{\fld{R}}
\newcommand{\CC}{\fld{C}}
\newcommand{\HH}{\fld{H}} % upper half-plane
\newcommand{\SLG}[2]{\grp{#1}_{#2}}       % Lie groups like SU(2)
\newcommand{\mgrp}{\grp{SL}\brac*{2;\ZZ}}
\newcommand{\finite}[1]{#1}
\newcommand{\affine}[1]{\widehat{#1}}
\newcommand{\SLA}[2]{\finite{\alg{#1}}_{#2}}             % Lie algebras like sl(2)
\newcommand{\AKMA}[2]{\affine{\alg{#1}}_{#2}}            % Kac-Moody algebras
\newcommand{\envalg}[1]{\mathsf{U}\brac{#1}}             % Universal enveloping (super)algebras
\newcommand{\ideal}[1]{\ang[\big]{#1}}
\newcommand{\glone}{\SLA{gl}{1}}
\newcommand{\sltwo}{\SLA{sl}{2}}
\newcommand{\gltwo}{\SLA{gl}{2}}
\newcommand{\slthree}{\SLA{sl}{3}}
\newcommand{\aslthree}{\AKMA{sl}{3}}
\newcommand{\aslpos}{\affine{\alg{sl}}{}_3^>}            % kludge for lowering superscript
\newcommand{\aslzero}{\affine{\alg{sl}}{}_3^0}
\newcommand{\aslneg}{\affine{\alg{sl}}{}_3^<}
\newcommand{\killingsymb}{\kappa}
\newcommand{\killing}[2]{\killingsymb \brac{#1 , #2}}                % the Killing form or the next best thing
\newcommand{\bilin}[2]{\brac{#1 , #2}}                               % the bilinear form on the weight space
\newcommand{\pair}[2]{\ang{#1 , #2}}                                 % the pair of a vector space and its dual
\newcommand{\tcartan}{\begin{psmallmatrix} 2 & -1 \\ -1 & 2 \end{psmallmatrix}}
\newcommand{\csub}{\alg{h}}                                          % Cartan subalgebra
\newcommand{\dcsub}{\alg{h}^*}                                       % dual Cartan subalgebra
\newcommand{\rdcsub}{\alg{h}^*_{\RR}}
\newcommand{\borel}{\alg{b}}                                         % Borel subalgebra
\newcommand{\para}{\alg{p}}                                          % parabolic subalgebra
\newcommand{\nilrad}{\alg{u}}                                        % nilradical of parabolic
\newcommand{\levi}{\alg{l}}                                          % Levi factor
\newcommand{\roots}{\Delta}                                          % set of roots
\newcommand{\sroot}[1]{\alpha_{#1}}                                  % simple root
\newcommand{\fwt}[1]{\Lambda_{#1}}                                   % fundamental weights
\newcommand{\fcwt}[1]{\xi_{#1}}                                      % fundamental coweight
\newcommand{\wvec}{\rho}                                             % Weyl vector
\newcommand{\weyl}{\grp{W}}                                          % Weyl group of sl(3)
\newcommand{\ccent}[1]{\envalg{#1}^{\csub}}                          % Centraliser of Cartan in UEA
\newcommand{\co}[1]{#1^{\vee}}                                       % co-things
\newcommand{\wlat}{\grp{P}}                                          % weight lattice
\newcommand{\rlat}{\grp{Q}}                                          % root lattice
\newcommand{\cwlat}{\co{\grp{P}}}                                    % coweight lattice
\newcommand{\crlat}{\co{\grp{Q}}}                                    % coroot lattice
\newcommand{\pwlat}{\wlat_{\ge}}
\newcommand{\nilorb}[1]{\mathbb{O}_{\text{#1}}}                      % nilpotent orbit
\newcommand{\sing}[1]{\operatorname{sing}(#1)}                       % Mathieu's singular set
\newcommand{\apwlat}[1]{\affine{\wlat}_{\ge}^{#1}}                   % affine dominant integral weights
\newcommand{\afwt}[1]{\affine{\Lambda}_{#1}}                         % affine fundamental weights
\newcommand{\admwts}[2]{A_{#1,#2}}                                   % set of admissible weights
\newcommand{\admuv}{\admwts{\uu}{\vv}}
\newcommand{\theawts}{\admwts{3}{2}}
\newcommand{\sadmwts}[2]{B_{#1,#2}}                                  % admissible weights giving gl(2) coherent families
\newcommand{\sadmuv}{\sadmwts{\uu}{\vv}}
\newcommand{\theswts}{\sadmwts{3}{2}}
\newcommand{\radmwts}[2]{C_{#1,#2}}                                  % admissible weights giving sl(3) coherent families
\newcommand{\radmuv}{\radmwts{\uu}{\vv}}
\newcommand{\therwts}{\radmwts{3}{2}}
\newcommand{\func}[2]{#1(#2)}                        % application of automorphisms
\newcommand{\autfont}[1]{\mathrm{#1}}
\newcommand{\weylaut}[1]{\autfont{w}_{#1}}           % (finite) Weyl group automorphism
\newcommand{\dynkaut}{\autfont{d}}                   % Dynkin symmetry
\newcommand{\conjaut}{\autfont{c}}                   % affine conjugation
\newcommand{\sfaut}[1]{\sigma^{#1}}                  % spectral flow wrt coweight #1
\newcommand{\weylmod}[2]{\func{\weylaut{#1}}{#2}}    % twist module #2 by Weyl element #1
\newcommand{\dynkmod}[1]{\func{\dynkaut}{#1}}        % twist a module by the Dynkin symmetry
\newcommand{\conjmod}[1]{\func{\conjaut}{#1}}        % conjugate an affine module
\newcommand{\sfmod}[2]{\func{\sfaut{#1}}{#2}}        % apply spectral flow wrt coweight #1 #2 times to #3
\newcommand{\wcat}{\categ{W}}                        % sl_3 weight modules
\newcommand{\awcat}[1]{\affine{\categ{W}}_{#1}}      % affine sl_3 weight modules
\newcommand{\awcatk}{\affine{\categ{W}}_{\kk}}       % affine sl_3 weight modules
\newcommand{\awcatuv}{\affine{\categ{W}}_{\uu,\vv}}  % affine sl_3 weight modules
\newcommand{\bpminmod}[2]{\VOA{BP}\brac{#1, #2}}                % Bershadsky-Polyakov minimal model VOA
\newcommand{\slnminmod}[3]{\VOA{A}_{#1} \brac{#2, #3}}          % admissible level sl(n) VOAs
\newcommand{\minmod}[2]{\slnminmod{2}{#1}{#2}}                  % admissible level sl(3) VOAs
\newcommand{\minmoduv}{\minmod{\uu}{\vv}}
\newcommand{\themm}{\minmod{3}{2}}
\newcommand{\slnuniv}[2]{\VOA{V}_{#2}(\SLA{sl}{#1})}            % universal sl(n) VOA (#2 is the level)
\newcommand{\slnsimp}[2]{\VOA{L}_{#2}(\SLA{sl}{#1})}            % universal sl(n) VOA (#2 is the level)
\newcommand{\univ}[1]{\slnuniv{3}{#1}}                          % universal sl(3) VOA
\newcommand{\univk}{\univ{\kk}}
\newcommand{\simp}[1]{\slnsimp{3}{#1}}                          % simple sl(3) VOA
\newcommand{\simpk}{\simp{\kk}}
\newcommand{\IrrSymb}{\Mod{L}}
\newcommand{\SRelSymb}{\Mod{S}}
\newcommand{\RelSymb}{\Mod{R}}
\newcommand{\firr}[1]{\IrrSymb_{#1}}              % irreducible hw module
\newcommand{\fsemi}[2]{\SRelSymb^{#1}_{#2}}       % semidense sl_3 module (induced from dense sl_2 module along root #1)
\newcommand{\frel}[2]{\RelSymb^{#1}_{#2}}         % dense sl_3 module
\newcommand{\coh}[1]{\Mod{C}^{#1}}                  % coherent family corresponding to hw #1
\newcommand{\airr}[1]{\affine{\IrrSymb}_{#1}}         % irreducible hw module
\newcommand{\asemi}[2]{\affine{\SRelSymb}^{#1}_{#2}}  % semirelaxed module
\newcommand{\arel}[2]{\affine{\RelSymb}^{#1}_{#2}}    % relaxed module
\DeclareMathOperator{\tr}{tr}
\newcommand{\Gr}[1]{\sqbrac[\big]{#1}}                 % element of a Grothendieck group/ring
\newcommand{\traceover}[1]{\tr_{\raisebox{-2pt}{$\scriptstyle #1$}}} % for ch[M] = tr_M ...
\DeclareMathOperator{\chmap}{ch}
\newcommand{\modcoords}[3]{\left( #1 \middle\bracevert #2 \middle\bracevert #3 \right)}
\newcommand{\ch}[1]{\chmap \Gr{#1}}                    % character
\newcommand{\fch}[4]{\ch{#1} \brac[\big]{#2;#3;#4}}    % new character as function of q and ...
\newcommand{\mch}[4]{\ch{#1} \modcoords{#2}{#3}{#4}}   % modular character, ie tau form
\newcommand{\modfont}[1]{\mathsf{#1}}
\newcommand{\mods}{\modfont{S}}                        % modular S-matrix
\newcommand{\modt}{\modfont{T}}                        % modular T-matrix
\newcommand{\smat}[2]{\mods^{#1}_{#2}}                         % standard S-matrix entry; #1 is spectral flow, #2 is weights
\newcommand{\asmat}[2]{\overline{\mods}{}^{#1}_{#2}}           % deg 1 atypical S-matrix entry
\newcommand{\aasmat}[2]{\dbloverline{\mods}{}^{#1}_{#2}}       % deg 2 atypical S-matrix entry
\newcommand{\dbloverline}[1]{\overline{\dbl@overline{#1}}}
\newcommand{\dbl@overline}[1]{\mathpalette\dbl@@overline{#1}}
\newcommand{\dbl@@overline}[2]{%
  \begingroup
  \sbox\z@{$\m@th#1\overline{#2}$}%
  \ht\z@=\dimexpr\ht\z@-2\dbl@adjust{#1}\relax
  \box\z@
  \ifx#1\scriptstyle\kern-\scriptspace\else
  \ifx#1\scriptscriptstyle\kern-\scriptspace\fi\fi
  \endgroup
}
\newcommand{\dbl@adjust}[1]{%
  \fontdimen8
  \ifx#1\displaystyle\textfont\else
  \ifx#1\textstyle\textfont\else
  \ifx#1\scriptstyle\scriptfont\else
  \scriptscriptfont\fi\fi\fi 3
}
\newcommand{\fuse}{\mathbin{\times}}                              % fusion #1 = the VOA
\newcommand{\Grfuse}{\mathbin{\boxtimes}}                         % Verlinde algebra fusion
\newcommand{\fuscoeff}[3]{\genfrac{\langle}{\rangle}{0pt}{0}{#3}{#1 \quad #2}}  % Grothendieck fusion coefficient
\newcommand{\cft}{conformal field theory}
\newcommand{\cfts}{conformal field theories}
\newcommand{\uea}{universal enveloping algebra}
\newcommand{\lw}{lowest-weight}
\newcommand{\lwv}{\lw\ vector}
\newcommand{\hw}{highest-weight}
\newcommand{\hwv}{\hw\ vector}
\newcommand{\hwvs}{\hwv s}
\newcommand{\hwm}{\hw\ module}
\newcommand{\hwms}{\hwm s}
\newcommand{\rhw}{relaxed highest-weight}
\newcommand{\rhwm}{\rhw\ module}
\newcommand{\rhwms}{\rhwm s}
\newcommand{\srhw}{semi\rhw}
\newcommand{\srhwm}{\srhw\ module}
\newcommand{\srhwms}{\srhwm s}
\newcommand{\vo}{vertex operator}
\newcommand{\voa}{\vo\ algebra}
\newcommand{\voas}{\voa s}
\newcommand{\ope}{operator product expansion}
\newcommand{\qhr}{quantum hamiltonian reduction} %D yes the _h_ is lowercase
\newcommand{\lhs}{left-hand side}
\newcommand{\rhs}{right-hand side}
\newcommand{\fdim}{finite-dimensional}
\newcommand{\infdim}{in\fdim}
\newcommand{\bp}{Bershadsky--Polyakov}
\newcommand{\ep}{Euler--Poincar\'{e}}
\newcommand{\kl}{Kazhdan--Lusztig}
\newcommand{\km}{Kac--Moody}
\newcommand{\wzw}{Wess--Zumino--Witten}
\theoremstyle{plain}
\newtheorem{theorem}{Theorem}[section]
\newtheorem{corollary}[theorem]{Corollary}
\newtheorem{lemma}[theorem]{Lemma}
\newtheorem{proposition}[theorem]{Proposition}
\renewcommand\author@andify{%
  \nxandlist {\unskip ,\penalty-1 \space\ignorespaces}%
    {\unskip {} \@@and~}%
    {\unskip \penalty-2 \space \@@and~}%
}
\begin{document}

\title{Admissible-level $\mathfrak{sl}_3$ minimal models}

\author[]{Kazuya Kawasetsu}
\address[Kazuya Kawasetsu]{
Priority Organization for Innovation and Excellence \\
Kumamoto University \\
Kumamoto 860-8555, Japan.
}
\email{kawasetsu@kumamoto-u.ac.jp}

\author[D~Ridout]{David Ridout}
\address[David Ridout]{
School of Mathematics and Statistics \\
University of Melbourne \\
Parkville, Australia, 3010.
}
\email{david.ridout@unimelb.edu.au}

\author[S~Wood]{Simon Wood}
\address[Simon Wood]{
School of Mathematics\\
Cardiff University \\
Cardiff, United Kingdom, CF24 4AG.
}
\email{woodsi@cardiff.ac.uk}

\subjclass[2020]{Primary 17B69, 81T40; Secondary 17B10, 17B67}

\begin{abstract}
	The first part of this work uses the algorithm recently detailed in \cite{KawRel19} to classify the irreducible weight modules of the minimal model \voa\ $\simpk$, when the level $\kk$ is admissible.  These are naturally described in terms of families parametrised by up to two complex numbers.  We also determine the action of the relevant group of automorphisms of $\aslthree$ on their isomorphism classes and compute explicitly the decomposition into irreducibles when a given family's parameters are permitted to take certain limiting values.

	Along with certain character formulae, previously established in \cite{KawRel20}, these results form the input data required by the standard module formalism to consistently compute modular transformations and, assuming the validity of a natural conjecture, the Grothendieck fusion coefficients of the admissible-level \(\slthree\) minimal models.  The second part of this work applies the standard module formalism to compute these explicitly when \(\kk=-\frac32\).  This gives the first nontrivial test of this formalism for a nonrational \voa\ of rank greater than $1$ and confirms the expectation that the methodology developed here will apply in much greater generality.
\end{abstract}

\maketitle

\markleft{K~KAWASETSU, D~RIDOUT AND S~WOOD} % don't know how to remove the oxford comma in the running head

\onehalfspacing

\section{Introduction} \label{sec:intro}

\subsection{Background}

Vertex operator algebras are versatile algebraic structures that have made numerous contributions to the fields of mathematical physics, representation theory, number theory and geometry, to name but a few. They are arguably best known as a rigorous algebraic axiomatisation of the chiral part of a two-dimensional \cft. Some of the earliest families of \voas\ to be studied were those constructed from untwisted affine \km\ algebras \cite{FreVer92}. When restricted to nonnegative-integral levels of these algebras, the associated \cfts\ describe strings propagating on (compact, connected, simply connected) Lie groups \cite{WitNon84,GepStr86}. In this case, the \cft\ is rational and the category of modules of the \voa\ is a modular tensor category.  In particular, the celebrated Verlinde formula holds for the fusion multiplicities \cite{VerFus88,MooCla89,HuaVer04,HuaVer04a}.

There are, however, many other interesting levels that one can consider such as the nonintegral admissible levels of Kac and Wakimoto. The corresponding \voas\ are no longer rational, but the characters of the \hwms\ span a representation of the modular group \cite{KacMod88}.  Unfortunately, the Verlinde formula gives negative fusion multiplicities in these cases \cite{KohFus88}, an inconsistency that led some in the research community to declare that nonintegral admissible levels might be ``intrinsically sick''.  Nevertheless, \voas\ with these levels are useful in studying nonunitary coset models \cite{KenInf86} and are essential for studying W-algebras via \qhr\ \cite{FeiQua90,KacQua03}.

The explanation for these negative fusion multiplicities was eventually isolated in \cite{RidSL208} for the \voa\ $\slnsimp{2}{-1/2}$ (and later extended to all $\slnsimp{2}{\kk}$ with $\kk$ admissible \cite{CreMod12,CreMod13} and many other \voas\ \cite{CreRel11,BabTak12,RidBos14,MorBou15,CanFus15,CanFus15b,RidAdm17,BabRep20,FehMod21}).  The problem lies with the fact that the character formulae of \cite{KacMod88} are analytic continuations of the formal power series that encode the graded dimensions of modules. Because these continuations do not completely distinguish irreducible modules, the Verlinde formula returns (Grothendieck) fusion multiplicities for a quotient category of (virtual) modules. To obtain the correct (Grothendieck) fusion multiplicities, the fix \cite{CreMod12} is to consider a much larger category of modules and treat their characters as distributions rather than meromorphic functions.  This fix was dubbed the ``standard module formalism'' in \cite{CreLog13,RidVer14}.

Whilst there is much evidence for the validity of the standard module formalism, including direct comparison of the results with independently obtained fusion multiplicities \cite{GabFus01,RidFus10,AdaFus19,AllBos20}, it has thus far been mainly tested on \voas\ that one could describe as being ``rank $1$'', like $\slnsimp{2}{\kk}$.  More precisely, the so-called standard modules for which the formalism is named form a $1$-parameter family in most examples.  In the few cases where a second parameter is needed, see for example \cite{CreRel11,BabTak12,BabRep20}, the additional parameter is the eigenvalue of a central element of the mode algebra, a relatively trivial generalisation.

In this paper, we report what we believe is the first application of the standard module formalism in a genuine rank-$2$ example, this being the admissible-level affine \voa\ $\simp{-3/2}$.  We moreover set up the general results needed to extend this application to $\simp{\kk}$, for any admissible $\kk$, the intent being to analyse this generalisation in the near future.  In fact, we emphasise the language of parabolic subalgebras throughout so that the reader will appreciate the means for, and difficulties inherent in, generalising to all admissible-level affine \voas.

\subsection{Results and outlook}

Let us describe our results in more detail.  The first is the classification of all irreducible weight $\simp{\kk}$-modules when $\kk$ is admissible (\cref{thm:class}).  While this classification was previously obtained by Futorny and collaborators using Gelfand--Tsetlin combinatorics \cite{AraWei16}, see also \cite{FutSim20,FutPos20,FutAdm21} for further recent progress, our proof is very different.  It follows from Arakawa's celebrated \hw\ classification \cite{AraRat16} by applying a vertex-algebraic generalisation of Mathieu's theory \cite{MatCla00} of coherent families, see \cite{KawRel19} for the details underlying this methodology.

This classification is not our main result however.  Experience with nonrational \voas\ has shown that a much finer understanding of the irreducible modules in the weight category is necessary.  In particular, one needs to know the explicit action of the invertible functors obtained by twisting with automorphisms of the underlying vertex algebra.  For $\simp{\kk}$, these are the automorphisms of the root system of $\slthree$ and the spectral flow automorphisms of $\aslthree$.  The results are quite intricate and are reported in \cref{prop:Wtwists,prop:dtwists,prop:sftwists}.

A second point is that the non\hw\ irreducible $\simp{\kk}$-modules form continuously parametrised (generalised coherent) families.  However, these parametrisations may be naturally extended to include certain reducible $\simp{\kk}$-modules and it is these reducible modules that are crucial for the correct understanding of the modularity of the weight category.  In \cref{prop:sdegen,prop:rdegen}, we determine their composition factors explicitly.  These two refinements to the irreducible classification are new and, as already mentioned, are necessary for a serious investigation of the further properties of the weight category.

These refinements serve as our first main result.  Our second is then a serious investigation of the weight category, specifically of its modularity.  This is significantly complicated by the fact that the relevant characters, themselves the result of highly nontrivial calculations \cite{KawRel20}, are almost never linearly independent.  However, this is not the case for precisely one (nonintegral) admissible level: $\kk=-\frac{3}{2}$.  Specialising to this level, we compute the modular S-transforms of these standard characters (\cref{thm:stcharmod}) and then combine this with our first main result to calculate the Grothendieck fusion multiplicities for all irreducible weight $\simp{-3/2}$-modules (\cref{sec:32fusion}).  This relies on the conjectural ``standard Verlinde formula'' of \cite{CreLog13,RidVer14}.  These modularity investigations constitute our second main result.  The fact that the multiplicities are indeed found to be nonnegative integers is a strong endorsement of the applicability of the standard module formalism to higher-rank nonrational \voas.

A natural question to ask is how one can bypass the linear dependence problem for other (nonintegral) admissible levels.  An identical (and in fact closely related) issue arises with the characters of Zamolodchikov's $W_3$ minimal models \cite{ZamInf85}.  Whilst the modularity of these rational \voas\ has been known for at least thirty years, see \cite{FreCha92} for example, a correct derivation of the modular S-matrix only appeared recently in work of Arakawa and van~Ekeren \cite{AraMod19} because of the linear dependence of the irreducible $W_3$ characters.  This derivation required the development of technology to compute with generalisations of characters called one-point functions.  These functions only differ from characters by the insertion of an additional zero-mode and, with the right zero-mode, the one-point functions of the irreducible weight modules are indeed linearly independent \cite{ZhuMod96}.  Unfortunately, explicit formulae for these functions are unknown.

This work nevertheless opens the door to exploring the modularity of weight $\simp{\kk}$-modules for all admissible levels $\kk$.  They key here is the inverse quantum hamiltonian reduction functors introduced by Adamovi\'{c} in \cite{AdaRea17}, see also \cite{SemInv94}.  These can be used to deduce remarkable character formulae for certain irreducible $\simp{\kk}$-modules.  Indeed, since this paper was written, it has been shown that the string functions of some of these modules are irreducible characters of the \bp\ minimal models \cite{AdaRel21}.  As these \bp\ characters have string functions that were shown to be irreducible $W_3$-characters in \cite{AdaRea20}, it only remains to lift the Arakawa--van~Ekeren one-point function modularity from $W_3$ to $\simp{\kk}$.  Since this paper was written, the lift to the \bp\ minimal models has in fact appeared \cite{FehMod21}.  The remaining lift is however not quite as straightforward as the modules considered in \cite{AdaRel21} may not exhaust the standard modules of $\simp{\kk}$ in general.  We hope to report on this in the future.

A second natural question to ask concerns the obvious conjecture that $\simp{\kk}$ admits logarithmic weight modules, these being modules on which the Cartan zero-modes act semisimply but the Virasoro zero-mode acts nonsemisimply with Jordan blocks of finite rank.  Experience with $\slnsimp{2}{\kk}$ \cite{GabFus01,AdaCon04,RidFus10,AdaRea17} indicates that such modules surely do exist and, again, since this paper was written, their existence has been confirmed \cite{AdaRel21}.  Unfortunately, the structure of the known logarithmic $\simp{\kk}$-modules remains mysterious --- even their composition factors are unknown.  However, a companion paper to this one \cite{CreKaz21} combines the results presented here with a conjectural logarithmic \kl\ correspondence to posit not only composition factors but also complete Loewy diagrams and fusion rules (but only in the special case $\kk = -\frac{3}{2}$).

In this context, the work reported here represents an important part of a rapidly advancing program to understand the representation theory and modularity of higher-rank affine \voas\ and W-algebras.  With the methodology being carefully selected to facilitate generalisations, we expect to uncover a beautiful general theory for weight $\VOA{L}_{\kk}(\alg{g})$-modules at (nonintegral) admissible levels $\kk$ that significantly extends that of the well known integrable \hwms\ that describe the spectrum when $\kk$ a non-negative integer.

\subsection{Outline}

The paper is organised as follows. In \cref{sec:Lie}, we fix our conventions for the (\fdim) Lie algebra \(\slthree\), summarise pertinent properties of its automorphisms and outline Mathieu's celebrated classification of its irreducible weight modules (with \fdim\ weight spaces) \cite{MatCla00}. In \cref{sec:Aff}, we similarly fix our conventions for the affine \km\ algebra \(\aslthree\), introduce the affine automorphisms called spectral flow and discuss the Zhu-inductions \cite{ZhuMod96} of the irreducible weight $\slthree$-modules.  We also explicitly identify the result of applying spectral flow to such a Zhu-induced module (when the result is again positive-energy).

The simple admissible-level \voas\ $\simp{\kk}$ are then introduced in \cref{sec:MinMod}.  After reviewing the admissible weights of Kac--Wakimoto, we quote a specialisation of Arakawa's famous classification of \hw\ $\simp{\kk}$-modules \cite{AraRat16}.  This is the input required by the algorithm presented in \cite{KawRel19} to classify the irreducible \rhw\ $\simp{\kk}$-modules.  We present the results of this algorithm and thereby obtain explicit conditions for an irreducible weight $\aslthree$-module to be an $\simp{\kk}$-module.  This result is naturally presented in terms of vertex-algebraic generalisations of coherent families.

After comparing this classification with the nilpotent orbits in $\slthree$, we carefully analyse two features of the resulting families of irreducibles.  First, we identify the isomorphism class of the result of twisting any of the irreducible weight $\simp{\kk}$-modules by an automorphism.  We also determine the composition factors of the reducible $\simp{\kk}$-modules that naturally arise when the parameters appearing in our classification take certain limiting values.  Both analyses are rather intricate, but are essential for the standard module formalism computations that follow.

Finally, \cref{sec:minmod32} specialises to the level $-\frac{3}{2}$.  First, we identify a set of standard modules for $\simp{-3/2}$ and note that their characters are linearly independent.  We compute the modular S-transforms of these standard characters and then use the result to calculate the Grothendieck fusion multiplicities for all irreducible weight $\simp{-3/2}$-modules from the conjectural ``standard Verlinde formula'' of \cite{CreLog13,RidVer14}.

\section*{Acknowledgements}

KK's research is partially supported by the Australian Research Council Discovery Project DP160101520, MEXT Japan ``Leading Initiative for Excellent Young Researchers (LEADER)'', JSPS Kakenhi Grant numbers 19KK0065 and 21K3775.
DR's research is supported by the Australian Research Council Discovery Projects DP160101520 and DP210101502, as well as an Australian Research Council Future Fellowship FT200100431.
SW's research is supported by the Australian Research Council
Discovery Project DP160101520 and the Humboldt Fellowship for Experience
Researchers GBR-1212053-HFST-E.

\section{The \fdim\ Lie algebra $\slthree$ and its weight modules} \label{sec:Lie}

In this section, we introduce our notation and conventions for the simple Lie algebra $\slthree$, its automorphisms and its weight modules.  For convenience, we shall assume here and throughout that the definition of a weight module requires that every weight space is \fdim.

\subsection{The simple Lie algebra $\slthree$} \label{sec:LieAlg}

Let $E_{ij}$ denote the $3 \times 3$ matrix whose entries are all zero save for that corresponding to row $i$ and column $j$ which is instead $1$.  The Lie algebra $\slthree$ is the complex vector space spanned by the traceless matrices
\begin{equation} \label{eq:sl3basis}
	e^3 = E_{13}, \quad
	\begin{aligned}
			e^1 &= E_{12}, & h^1 &= E_{11}-E_{22}, & f^1 &= E_{21}, \\
			e^2 &= E_{23}, & h^2 &= E_{22}-E_{33}, & f^2 &= E_{32},
	\end{aligned}
	\quad f^3 = E_{31},
\end{equation}
equipped with the matrix commutator as Lie bracket.  We fix the Cartan subalgebra to be $\csub = \vspn_{\CC} \set{h^1, h^2}$ and normalise the Killing form so that its only nonvanishing entries, with respect to the basis \eqref{eq:sl3basis}, are
\begin{equation} \label{eq:Killing}
	\killing{e^i}{f^j} = \killing{f^i}{e^j} = \delta^{ij}, \quad \killing{h^k}{h^{\ell}} = A^{k\ell}, \quad i,j=1,2,3,\ k,\ell=1,2.
\end{equation}
Here, $A = \tcartan$ denotes the Cartan matrix of $\slthree$.  In what follows, the pairing of the dual space $\dcsub$ with $\csub$ will be denoted by $\pair{\blank}{\blank}$ while the bilinear form on $\dcsub$ induced from $\killingsymb$ will be denoted by $\bilin{\blank}{\blank}$.

With this data, we describe the root system $\roots$ of $\slthree$ using the following conventions:
\begin{itemize}
	\item The simple roots, denoted by $\sroot{1}$ and $\sroot{2}$, correspond to the simple root vectors $e^1$ and $e^2$, respectively.
	\item The simple coroots are $h^1$ and $h^2$ while the (dual) fundamental weights are denoted by $\fwt{1}$ and $\fwt{2}$, respectively.
	\item The highest root is denoted by $\sroot{3}$.  Its root vector is $e^3$ and its coroot is $h^3 = h^1 + h^2$.
\end{itemize}
When convenient, the element of \eqref{eq:sl3basis} that defines a root vector for the root $\alpha$ will also be denoted by $e^{\alpha}$.  If $\alpha$ is a positive root, then we will also use the notation $f^{\alpha} = e^{-\alpha}$.  We illustrate these roots and weights in \cref{fig:sl3roots}.
\begin{figure}
	\begin{tikzpicture}[scale=2.5,<->,>=latex]
		\makeaxes{node[black,right] {$\lambda_1$}}{node[black,right] {$\lambda_2$}}
		\node[wt,label=right:$\sroot{1}$] at (0:0.86) {};
		\node[wt,label=above right:$\sroot{3}$] at (60:0.86) {};
		\node[wt,label=above left:$\sroot{2}$] at (120:0.86) {};
		\node[wt,label=left:$-\sroot{1}$] at (180:0.86) {};
		\node[wt,label=below left:$-\sroot{3}$] at (-120:0.86) {};
		\node[wt,label=below right:$-\sroot{2}$] at (-60:0.86) {};
		\node[owt,label=above:$\fwt{1}$] at (30:0.5) {};
		\node[owt,label=left:$\fwt{2}$] at (90:0.5) {};
		\node[owt,label=left:$\fwt{3}$] at (150:0.5) {};
		\node[owt,label=below:$-\fwt{1}$] at (-150:0.5) {};
		\node[owt,label=right:$-\fwt{2}$] at (-90:0.5) {};
		\node[owt,label=right:$-\fwt{3}$] at (-30:0.5) {};
	\end{tikzpicture}
\caption{The roots (white) of $\slthree$ in (the real slice of) $\dcsub$ along with the weights of the fundamental $\slthree$-modules (black).  Here, we set $\fwt{3} = \fwt{2} - \fwt{1}$ for convenience.  The coordinates $\lambda_1$ and $\lambda_2$ are identified as Dynkin labels.} \label{fig:sl3roots}
\end{figure}

The weight and root lattices of $\slthree$ will be denoted by
\begin{equation}
	\wlat = \vspn_{\ZZ} \set{\fwt{1},\fwt{2}} \subset \dcsub \quad \text{and} \quad \rlat = \vspn_{\ZZ} \set{\sroot{1},\sroot{2}} \subset \dcsub,
\end{equation}
respectively.  The set of dominant integral weights will be denoted by $\pwlat = \vspn_{\NN} \set{\fwt{1},\fwt{2}}$.  The integer duals, with respect to $\pair{\blank}{\blank}$, of $\wlat$ and $\rlat$ are the coroot and coweight lattices
\begin{equation}
	\crlat = \vspn_{\ZZ} \set{h^1,h^2} \subset \csub \quad \text{and} \quad \cwlat = \vspn_{\ZZ} \set{\fcwt{1},\fcwt{2}} \subset \csub,
\end{equation}
respectively.  The fundamental coweights $\fcwt{1}$ and $\fcwt{2}$ thus
satisfy $\pair{\sroot{i}}{\fcwt{j}} = \delta_{ij}$.

As $\slthree$ is a rank $2$ simple Lie algebra with exponents $1$ and $2$, the centre of its \uea{} $\envalg{\slthree}$ is a polynomial ring in two generators $Q$ and $C$, where $Q$ is quadratic in the basis \eqref{eq:sl3basis} and $C$ is cubic.  Explicit expressions for these generators are easily found:
\begin{subequations} \label{eq:defCasimirs}
	\begin{align}
		Q &= \frac{1}{3} \brac*{h^1 h^1 + h^2 h^2 + h^3 h^3} + h^1 + h^2 + h^3 + 2 \brac*{f^1 e^1 + f^2 e^2 + f^3 e^3}, \label{eq:defQ} \\
		C &= \brac*{h^1+2h^2+3} \brac*{2h^1+h^2+3} \brac*{h^1-h^2} \notag \\
		&\qquad + 9 f^1 e^1 \brac*{h^1+2h^2+3} - 9 f^2 e^2 \brac*{2h^1+h^2+3} + 9 f^3 e^3 \brac*{h^1-h^2} + 27 f^1 f^2 e^3 + 27 f^3 e^1 e^2. \label{eq:defC}
	\end{align}
\end{subequations}
We remark that the factors $h^1+2h^2+3$, $2h^1+h^2+3$, and $h^1-h^2$ appearing in $C$ commute with the generators $e^1$ and $f^1$, $e^2$ and $f^2$, and $e^3$ and $f^3$, respectively.

\subsection{Automorphisms of $\slthree$} \label{sec:LieAut}

The automorphisms of $\roots$ come in two flavours: the $6$ inner automorphisms of the Weyl group $\weyl \cong \grp{S}_3$ and the $6$ outer automorphisms obtained by composing a Weyl symmetry with the $\ZZ_2$-symmetry of the Dynkin diagram.  Together, these automorphisms form the dihedral group $\grp{D}_6 \cong \grp{S}_3 \times \ZZ_2$.  We denote the simple Weyl reflections by $\weylaut{1}$ and $\weylaut{2}$.  The reflection corresponding to the highest root $\sroot{3}$ is denoted by $\weylaut{3} = \weylaut{1} \weylaut{2} \weylaut{1} = \weylaut{2} \weylaut{1} \weylaut{2}$.  The actions on $\csub$ and $\dcsub$ are thus
\begin{equation} \label{eq:Waction}
	\weylmod{i}{h} = h - \killing{h}{h^i} h^i \quad \text{and} \quad
	\weylmod{i}{\lambda} = \lambda - \pair{\lambda}{h^i} \sroot{i}, \qquad i=1,2,3,\ h \in \csub,\ \lambda \in \dcsub.
\end{equation}
Note that $\lambda_i = \pair{\lambda}{h^i}$ is, for $i=1,2$, the $i$-th Dynkin label of $\lambda$.  We also recall the shifted action of $\weyl$ on $\dcsub$:
\begin{equation} \label{eq:shWaction}
	\weylaut{i} \cdot \lambda = \weylmod{i}{\lambda + \wvec} - \wvec = \lambda - (\lambda_i + 1) \sroot{i}, \qquad i=1,2,\ \lambda \in \dcsub.
\end{equation}
Here, $\wvec = \fwt{1} + \fwt{2}$ denotes the Weyl vector of $\slthree$.

The automorphism corresponding to the Dynkin symmetry is denoted by $\dynkaut$ and acts by
\begin{equation} \label{eq:daction}
	\dynkmod{h^i} = h^{\dynkmod{i}} \quad \text{and} \quad
	\dynkmod{\fwt{i}} = \fwt{\dynkmod{i}},
\end{equation}
extended to $\csub$ and $\dcsub$ by linearity.  Here, $\dynkaut$ acts on $i \in \set{1,2,3}$ as the transposition $(1,2)$.  We also have $\dynkaut w_i = w_{\dynkmod{i}} \dynkaut$.  The automorphism $\conjaut = \dynkaut \weylaut{3} = \weylaut{3} \dynkaut$ is called the \emph{conjugation} automorphism.  It acts on $\csub$ and $\dcsub$ as $-1$ times the identity and so is central in $\grp{D}_6$: $\conjaut \weylaut{i} = \weylaut{i} \conjaut$.  Extending the shifted action to $\grp{D}_6$ gives
\begin{equation} \label{eq:shdaction}
	\dynkaut \cdot \lambda = \dynkmod{\lambda} \quad \text{and} \quad
	\conjaut \cdot \lambda = -\lambda - 2 \wvec, \qquad \lambda \in \dcsub.
\end{equation}

Each of these root system automorphisms may be extended to an automorphism of $\slthree$ and we shall use the same notation for these extensions as for their restrictions to $\alg{h}$.  These extensions are not unique, but we shall fix one choice arbitrarily for each automorphism of the root system, noting that different choices will not affect what follows.  The resulting set of $\slthree$-automorphisms does not satisfy the defining relations of $\grp{D}_6$, in particular those corresponding to reflections need not square to the identity.  However, we still have the property that each extended automorphism $\omega$ maps the root space labelled by the root $\alpha$ into the root space labelled by $\func{\omega}{\alpha}$.  It follows that the extended automorphisms define a projective action of $\grp{D}_6$ on $\slthree$.

We shall make frequent use of twisting representations of $\slthree$ by automorphisms.  This amounts to applying the automorphism before acting with the representation morphism.  As we prefer to keep representations implicit, we implement this twisting notationally through the language of modules as follows:  Given an $\slthree$-automorphism $\omega$ and an $\slthree$-module $\Mod{M}$, define $\func{\omega^*}{\Mod{M}}$ to be the image of $\Mod{M}$ under an (arbitrarily chosen) isomorphism $\omega^*$ of vector spaces.  The action of $\slthree$ on $\func{\omega^*}{\Mod{M}}$ is then defined by
\begin{equation}
	x \, \func{\omega^*}{v} = \func{\omega^*}{\func{\omega^{-1}}{x} \, v}, \quad x \in \slthree,\ v \in \Mod{M}.
\end{equation}
In other words, $\func{\omega}{x} \, \func{\omega^*}{v} = \func{\omega^*}{xv}$.  In view of this natural property, we shall, from here on, drop the star that distinguishes the automorphism $\omega$ from the corresponding vector space isomorphism $\omega^*$.

The extensions of the root system automorphisms therefore define autoequivalences on the category $\wcat$ of weight $\slthree$-modules.  However, it is not clear if these functors may be chosen to give an action of $\grp{D}_6$ on this category because the extensions do not satisfy all the defining relations of $\grp{D}_6$.  Nevertheless, we have a well defined $\grp{D}_6$-action on the set of isomorphism classes of weight $\slthree$-modules.  As we are mostly concerned with identifying modules up to isomorphism, this will suffice for what follows.  We remark that this action is obviously structure-preserving.  In particular, it preserves irreducibility.

\subsection{Irreducible weight $\slthree$-modules} \label{sec:LieIrr}

Recall that a parabolic subalgebra is one that contains a Borel subalgebra.  Every irreducible weight module over a simple (finite-dimensional complex) Lie algebra $\alg{g}$ is isomorphic to a module obtained by the following procedure \cite{FerLie90}:
\begin{itemize}
	\item Choose a parabolic subalgebra $\para$ of $\alg{g}$ and an irreducible dense module $\Mod{D}$ over the Levi factor $\levi$ of $\alg{p}$.
	\item Extend $\Mod{D}$ to a $\para$-module by letting the nilradical $\nilrad$ of $\para$ act trivially.
	\item Take the irreducible quotient of the induced $\alg{g}$-module $\Ind_{\para}^{\alg{g}} \Mod{D}$.
\end{itemize}
Here, a \emph{dense} module is a weight module whose weight support (its set of weights) coincides with a coset of $\dcsub / \rlat$, where $\rlat$ is the root lattice of $\alg{g}$.  Such weight supports are clearly maximal among those of all indecomposable weight modules.  We mention that irreducible dense modules are also called \emph{cuspidal} and \emph{torsion-free} in the literature.  Irreducible dense modules are known to only exist in types $\mathrm{A}$ and $\mathrm{C}$ \cite{FerLie90}.

For $\slthree$, there are four distinct parabolic subalgebras, up to twisting by $\weyl \cong \grp{S}_3$:
\begin{itemize}
	\item The Borel subalgebra $\borel = \vspn_{\CC} \set{e^1,e^2,e^3,h^1,h^2}$ with Levi \(\csub\) and nilradical \(\vspn_{\CC}\set{e^1,e^2,e^3}\).
	\item The subalgebra $\para = \vspn_{\CC} \set{e^1,e^2,e^3,h^1,h^2,f^1}$ with Levi $\levi = \vspn_{\CC}\set{e^1,h^1,h^2,f^1} \cong \sltwo \oplus \glone           \cong \gltwo$ and nilradical \(\nilrad=\vspn_{\CC}\set{e^2,e^3}\).
	\item The subalgebra $\dynkmod{\para} = \vspn_{\CC} \set{e^1,e^2,e^3,h^1,h^2,f^2}$ with Levi $\dynkmod{\levi} = \vspn_{\CC} \set{e^2,h^1,h^2,f^2} \cong \sltwo \oplus \glone \cong \gltwo$ and nilradical \(\dynkmod{\nilrad} = \vspn_{\CC} \set{e^1,e^3}\).
	\item The entire algebra $\slthree$ whose Levi is also $\slthree$ and whose nilradical is \(0\).
\end{itemize}
When the parabolic is $\borel$, the procedure above starts from an irreducible dense $\csub$-module.  As this is one-dimensional, the result is an irreducible \hw\ $\slthree$-module, classified up to isomorphism by its highest weight $\mu \in \dcsub$.  We shall therefore denote it by $\firr{\mu}$.  The weight support of a generic $\firr{\mu}$ is illustrated in \cref{fig:sl3mods} (left).  This irreducible may be twisted by an automorphism (autoequivalence) $\omega$.  In general, this results in an irreducible \hwm\ with respect to a different choice of Borel, namely $\func{\omega}{\borel}$.  We remark however that $\dynkaut$ preserves the standard Borel $\borel$, so $\dynkmod{\firr{\mu}} \cong \firr{\dynkmod{\mu}}$.  Moreover, we have $\weylmod{i}{\firr{\mu}} \cong \firr{\mu}$, for $i=1,2$, if and only if $\mu_i \in \NN$.
\begin{figure}
	\begin{tikzpicture}[scale=1.8,<->,>=latex,baseline=(current bounding box.center)]
		\makeaxes{}{}
		\begin{scope}[shift={(0.2,0.3)}]
			\node[owt,label=above right:$\scriptstyle\mu$] at (0,0) {};
			\foreach \n in {1,2,3,4} \node[owt] at (180:0.3*\n) {};
			\begin{scope}[shift={(-60:0.3)}]
				\foreach \n in {0,1,2,3,4} \node[owt] at (180:0.3*\n) {};
			\end{scope}
			\begin{scope}[shift={(-60:0.6)}]
				\foreach \n in {0,1,2,3,4} \node[owt] at (180:0.3*\n) {};
			\end{scope}
			\begin{scope}[shift={(-60:0.9)}]
				\foreach \n in {0,1,2,3,4} \node[owt] at (180:0.3*\n) {};
			\end{scope}
			\begin{scope}[shift={(-60:1.2)}]
				\foreach \n in {0,1,2,3,4} \node[owt] at (180:0.3*\n) {};
			\end{scope}
			\node[] at (180:0.3*5) {$\cdots$};
			\node[rotate=-60] at (-60:0.3*5) {$\cdots$};
			\node[rotate=60] at (-120:0.3*5) {$\cdots$};
		\end{scope}
	\end{tikzpicture}
	\hfill
	\begin{tikzpicture}[scale=1.8,<->,>=latex,baseline=(current bounding box.center)]
		\makeaxes{}{}
		\begin{scope}[shift={(0.15,0.4)}]
			\node[owt,label=above:$\scriptstyle\mu$] at (0,0) {};
			\foreach \n in {-3,-2,-1,1,2,3,4} \node[owt] at (180:0.3*\n) {};
			\begin{scope}[shift={(-60:0.3)}]
				\foreach \n in {-3,-2,-1,0,1,2,3,4,5} \node[owt] at (180:0.3*\n) {};
			\end{scope}
			\begin{scope}[shift={(-60:0.6)}]
				\foreach \n in {-2,-1,0,1,2,3,4,5} \node[owt] at (180:0.3*\n) {};
			\end{scope}
			\begin{scope}[shift={(-60:0.9)}]
				\foreach \n in {-2,-1,0,1,2,3,4,5,6} \node[owt] at (180:0.3*\n) {};
			\end{scope}
			\begin{scope}[shift={(-60:1.2)}]
				\foreach \n in {-1,0,1,2,3,4,5,6} \node[owt] at (180:0.3*\n) {};
			\end{scope}
			\node[] at (180:0.3*5) {$\cdots$};
			\node[] at (0:0.3*4) {$\cdots$};
			\node[rotate=-60] at (-60:0.3*5) {$\cdots$};
			\node[rotate=60] at (-120:0.3*5) {$\cdots$};
		\end{scope}
	\end{tikzpicture}
	\hfill
	\begin{tikzpicture}[scale=1.8,<->,>=latex,baseline=(current bounding box.center)]
		\makeaxes{}{}
		\begin{scope}[shift={(0.15,0.25)}]
			\node[owt,label=above:$\scriptstyle\mu$] at (0,0) {};
			\foreach \n in {-3,-2,-1,1,2,3,4} \node[owt] at (180:0.3*\n) {};
			\begin{scope}[shift={(-60:0.3)}]
				\foreach \n in {-2,-1,0,1,2,3,4} \node[owt] at (180:0.3*\n) {};
				\node[] at (180:0.3*5) {$\cdots$};
				\node[] at (0:0.3*3) {$\cdots$};
			\end{scope}
			\begin{scope}[shift={(-60:0.6)}]
				\foreach \n in {-2,-1,0,1,2,3,4,5} \node[owt] at (180:0.3*\n) {};
			\end{scope}
			\begin{scope}[shift={(-60:0.9)}]
				\foreach \n in {-1,0,1,2,3,4,5} \node[owt] at (180:0.3*\n) {};
			\end{scope}
			\begin{scope}[shift={(-60:1.2)}]
				\foreach \n in {-1,0,1,2,3,4,5,6} \node[owt] at (180:0.3*\n) {};
			\end{scope}
			\begin{scope}[shift={(120:0.3)}]
				\foreach \n in {-3,-2,-1,0,1,2,3} \node[owt] at (180:0.3*\n) {};
			\end{scope}
			\begin{scope}[shift={(120:0.6)}]
				\foreach \n in {-4,-3,-2,-1,0,1,2,3} \node[owt] at (180:0.3*\n) {};
			\end{scope}
			\node[rotate=-60] at (-60:0.3*5) {$\cdots$};
			\node[rotate=60] at (-120:0.3*5) {$\cdots$};
			\begin{scope}[shift={(-90:0.3*1.732)}]
				\node[rotate=60] at (60:0.3*5) {$\cdots$};
				\node[rotate=-60] at (120:0.3*5) {$\cdots$};
			\end{scope}
		\end{scope}
	\end{tikzpicture}
\caption{The weight support of a generic irreducible \hwm\ $\firr{\mu}$ (left), a generic irreducible semidense module $\fsemi{\lambda}{[\mu]}$ (middle) and a generic irreducible dense module $\frel{\lambda}{[\mu]}$ (right) for $\slthree$.  In each case, we indicate a weight $\mu$, noting that the coset in the semidense and dense cases is $[\mu] = \mu + \ZZ \sroot{1}$ and $[\mu] = \mu + \rlat$, respectively.} \label{fig:sl3mods}
\end{figure}

When the parabolic is $\para$, the procedure instead starts from an irreducible dense $\levi$-module, $\levi \cong \gltwo$, that is the tensor product of an irreducible dense $\sltwo$-module and a one-dimensional $\glone$-module.  In \cite{MatCla00}, Mathieu classified the irreducible dense modules over a general reductive Lie algebra $\alg{g}$, with Cartan subalgebra $\csub$, by showing that each may be uniquely realised as a direct summand of an irreducible semisimple \emph{coherent family}
\begin{equation}
	\coh{} = \bigoplus_{[\mu]} \coh{}_{[\mu]}.
\end{equation}
Here, the sum is over cosets of the weight support of $\coh{}$ modulo the root lattice of $\alg{g}$.  A coherent family of $\alg{g}$-modules is defined to satisfy the following conditions:
\begin{itemize}
	\item The dimension of the weight space $\coh{}(\mu)$ is independent of the weight $\mu \in \dcsub$.
	\item The trace of the action on $\coh{}(\mu)$ of every $U$ in the centraliser of the Cartan, $\ccent{\alg{g}}$, is a polynomial in $\mu$.
\end{itemize}
We recall that $\ccent{\alg{g}}$ is the subalgebra of $\envalg{\alg{g}}$ consisting of all weight-$0$ elements.  The direct summands $\coh{}_{[\mu]}$ are always dense and, in fact, are irreducible for all but finitely many $[\mu]$.  A coherent family $\coh{}$ is said to be irreducible, if some $\coh{}_{[\mu]}$ is irreducible, and semisimple, if every $\coh{}_{[\mu]}$ is completely reducible.

For $\alg{g} = \levi \cong \sltwo \oplus \glone$, it is easy to show that $\ccent{\levi}$ is a polynomial algebra in the Cartan basis elements and the quadratic Casimir.  It follows that an irreducible dense $\levi$-module has $1$-dimensional weight spaces, hence so do the corresponding coherent families.  Let $\sroot{}$ denote the simple root of $\sltwo$ and let $e$ and $f$ be the corresponding positive and negative root vectors, respectively.  Then, $fe \in \ccent{\levi}$ acts as a (nonconstant) polynomial in $\mu$ on the $\coh{}(\mu)$, hence it acts on some $\coh{}(\mu)$ as zero.  It therefore follows that this $\coh{}(\mu)$ is either spanned by a \hwv\ or $\coh{}(\mu+\sroot{})$ is spanned by a \lwv.  We may choose, among the $\mu$ satisfying this property, one whose $\sltwo$ Dynkin index has minimal real part.  Denote this weight by $\lambda$.  It then follows that every coherent family of $\levi$-modules possesses a reducible direct summand $\coh{}_{[\lambda]}$ with a composition factor isomorphic to an \infdim\ \hwm\ with highest weight $\lambda$.

Conversely, every irreducible \infdim\ \hw\ $\levi$-module yields, via the \emph{twisted localisation} functors introduced in \cite{MatCla00}, an irreducible coherent family with the \hwm\ as a composition factor.  This may then be \emph{semisimplified} if an irreducible semisimple coherent family is desired.  Moreover, two irreducible \infdim\ \hwms\ yield isomorphic (irreducible semisimple) coherent families of $\levi$-modules if and only if their highest weights, $\lambda$ and $\lambda'$ say, satisfy $\lambda = \lambda'$ or $\lambda = \weylaut{1} \cdot \lambda'$.\footnote{If we were considering coherent families of $\dynkmod{\levi}$-modules instead of $\levi$-modules, then the Weyl reflection appearing here would be $\weylaut{2}$.}

It follows that irreducible semisimple coherent families of $\levi$-modules (and hence dense $\levi$-modules) are classified by irreducible \infdim\ \hw\ $\levi$-modules (modulo the shifted action of the Weyl group of $\levi$).  We shall therefore denote an irreducible semisimple coherent family of $\levi$-modules by $\coh{\lambda}$, where $\lambda$ is the highest weight of one of its \infdim\ \hw\ composition factors, noting again that $\lambda$ is only determined uniquely up to the shifted action of the Weyl group.  We also note that the direct summand $\coh{\lambda}_{[\mu]}$ is reducible if and only if $[\mu] = [\lambda]$ or $[\weylaut{1} \cdot \lambda]$.

Returning now to the construction of irreducible $\slthree$-modules corresponding to the parabolic $\para$, note that the weight support of the irreducible semisimple coherent family $\coh{\lambda}$ of $\levi$-modules is the coset $\lambda + \CC \sroot{1} \in \dcsub / \CC \sroot{1}$.   That of the irreducible dense $\levi$-module $\coh{\lambda}_{[\mu]}$ then coincides with some coset $[\mu] \in \dcsub / \ZZ \sroot{1}$, where $\mu \in \lambda + \CC \sroot{1}$.  In other words, we have $[\mu] \in (\lambda + \CC \sroot{1}) / \ZZ \sroot{1}$.  The weight support of the irreducible quotient of the $\slthree$-module induced from $\coh{\lambda}_{[\mu]}$ then coincides with
\begin{equation}
	\mu + \ZZ \sroot{1} - \NN \sroot{2} = \set*{\mu+m\sroot{1}-n\sroot{2}\st  m\in\ZZ,\ n\in\NN}.
\end{equation}
We shall refer to this irreducible quotient as a \emph{semidense} $\slthree$-module, denoting it by $\fsemi{\lambda}{[\mu]}$.  The weight support of a generic $\fsemi{\lambda}{[\mu]}$ is illustrated in \cref{fig:sl3mods} (middle).

Twisting $\fsemi{\lambda}{[\mu]}$ by an automorphism $\omega$ again results in a new irreducible semidense module corresponding to $\omega(\para)$.  In particular, we obtain all the irreducible semidense modules for the parabolic $\dynkmod{\para}$ in this fashion.  Note also that twisting $\fsemi{\lambda}{[\mu]}$ by $\weylaut{1}$ results in the semidense module $\fsemi{\lambda}{[\weylmod{1}{\mu}]}$, because $\weylmod{1}{\para} = \para$.

It therefore only remains to consider the case in which the parabolic is all of $\slthree$, hence the corresponding irreducible modules are dense.  In this case, the irreducible semisimple coherent families of $\slthree$-modules again have irreducible \infdim\ \hw\ submodules.  But as the weight multiplicities of a coherent family are uniformly bounded, the same must be true for these submodules.  A weight $\slthree$-module is said to be \emph{bounded} if it is \infdim\ and its weight multiplicities are uniformly bounded.  Mathieu's work \cite{MatCla00} then classifies the irreducible semisimple coherent families $\coh{\lambda}$ of $\slthree$-modules in terms of the highest weight $\lambda \in \dcsub$ of any one of their (necessarily irreducible) bounded \hw\ $\slthree$-submodules.  Again, $\lambda$ is only unique up to the shifted action of $\weyl$.

We shall therefore denote an irreducible dense $\slthree$-module by $\frel{\lambda}{[\mu]}$, where $[\mu] \in \dcsub / \rlat$ is the weight support and $\lambda \in \dcsub$ specifies the unique (up to isomorphism) irreducible semisimple coherent family $\coh{\lambda}$ of $\slthree$-modules with $\coh{\lambda}_{[\mu]} \cong \frel{\lambda}{[\mu]}$.  The weight support of a generic $\frel{\lambda}{[\mu]}$ is illustrated in \cref{fig:sl3mods} (right).  It turns out that every $\weyl$-twist of a semisimple coherent family results in an isomorphic semisimple coherent family \cite{MatCla00}.  However, twists by $\dynkaut$ (and hence $\conjaut$) need not preserve the isomorphism class.  An easy way to see this is to note from \eqref{eq:defC} that $\conjaut$, which acts on the defining representation \eqref{eq:sl3basis} of $\slthree$ as $A \mapsto -A^T$, sends the cubic Casimir $C$ to $-C$.

\section{The affine \km\ algebra \texorpdfstring{$\aslthree$}{sl3} and its weight modules} \label{sec:Aff}

The focus of this \lcnamecref{sec:Aff} is the affine \km\ algebra $\aslthree$, its associated affine vertex algebras and their (smooth) weight modules.  We assume that a weight $\aslthree$-module has finite-dimensional weight spaces, where a weight space is defined to be the intersection of a simultaneous eigenspace of $\csub \oplus \CC K \ira \aslthree$, where $K$ denotes the central element, and a generalised eigenspace of the Virasoro zero mode (derivation) $L_0$.

\subsection{The affine \km\ algebra \texorpdfstring{$\aslthree$}{sl3} and its associated vertex algebras} \label{sec:AffAlg}

The affinisation of $\slthree$, with respect to the normalised Killing form \eqref{eq:Killing}, is
\begin{equation} \label{eq:asl3}
	\aslthree = \slthree \otimes \CC[t,t^{-1}] \oplus \CC K,
\end{equation}
where the Lie bracket is defined by
\begin{equation} \label{eq:asl3comm}
	\comm{x \otimes t^m}{y \otimes t^n} = \comm{x}{y} \otimes t^{m+n} + m \killing{x}{y} \delta_{m+n,0} K, \quad
	\comm{x \otimes t^m}{K} = 0, \qquad x,y \in \slthree,\ m,n \in \ZZ.
\end{equation}
We shall usually follow standard practice in abbreviating $x \otimes t^m$ as $x_m$.

There is an obvious decomposition of this affinisation into subalgebras:
\begin{equation}
	\begin{gathered}
		\aslthree = \aslpos \oplus \aslzero \oplus \aslneg; \\
		\aslpos = \vspn_{\CC} \set{x_n \st x \in \slthree,\ n \in \ZZ_{>0}}, \quad
		\aslzero = \vspn_{\CC} \set{x_0, K \st x \in \slthree}, \quad
		\aslneg = \vspn_{\CC} \set{x_n \st x \in \slthree,\ n \in \ZZ_{<0}}.
	\end{gathered}
\end{equation}
With this generalised triangular decomposition, we may extend an arbitrary $\slthree$-module to an $(\aslpos \oplus \aslzero)$-module by letting $K$ act as some multiple $\kk \in \CC$ of the identity and $\aslpos$ act trivially.  This may then be induced to an $\aslthree$-module.  The resulting $\aslthree$-module is called the level-$\kk$ affinisation of the original $\slthree$-module.

If we affinise the trivial $\slthree$-module in this fashion, then we obtain a
parabolic Verma module of highest weight $\kk \afwt{0}$, where $\afwt{i}$
denotes the $i$-th fundamental weight of $\aslthree$.  It is well known
\cite{FreVer92} that this module admits the structure of a vertex algebra with
strong generators and (\ope) relations  given by
\begin{equation} \label{eq:UnivVOA}
	x(z) = \sum_{n \in \ZZ} x_n z^{-n-1}, \quad
	x(z) y(w) \sim \frac{\killing{x}{y} \kk \wun}{(z-w)^2} + \frac{\comm{x}{y}(w)}{z-w}, \qquad x,y \in \slthree,
\end{equation}
where $\wun$ denotes the identity field.  For $\kk \neq -3$, the Sugawara construction gives a conformal structure for which the $x(z)$ with $x \in \slthree$ are Virasoro primary fields of conformal weight $1$.  Recalling that $h^3 = h^1 + h^2$, we have
\begin{multline} \label{eq:T}
  T(z) = \frac{1}{2(\kk+3)} \biggl[\frac{1}{3} \brac[\Big]{\no{h^1(z) h^1(z)} + \no{h^2(z) h^2(z)} + \no{h^3(z) h^3(z)}} \biggr. \\
  \biggl. - \pd h^1(z) - \pd h^2(z) - \pd h^3(z) + 2 \brac[\Big]{\no{e^1(z) f^1(z)} + \no{e^2(z) f^2(z)} + \no{e^3(z) f^3(z)}} \biggr]
\end{multline}
and the Virasoro modes defined by $T(z) = \sum_{n \in \ZZ} L_n z^{-n-2}$ have central charge
\begin{equation} \label{eq:c}
	\cc = \frac{8\kk}{\kk+3}.
\end{equation}

The \voa\ constructed on this level-$\kk$ parabolic Verma module is said to be \emph{universal}.  We shall denote this universal affine \voa\ by $\univk$.  It is simple unless the level $\kk \neq -3$ has the following form \cite{GorSim07}:
\begin{equation} \label{eq:ktuv}
	\kk+3 = \fracuv, \quad \uu \in \ZZ_{\ge 2},\ \vv \in \ZZ_{\ge 1},\ \gcd \set{\uu,\vv} = 1.
\end{equation}
If $\uu \in \ZZ_{\ge 3}$ in \eqref{eq:ktuv}, then $\kk$ is said to be an \emph{admissible level} \cite{KacMod88}.  When $\kk$ satisfies \eqref{eq:ktuv}, we shall denote the (unique) simple quotient of $\univk$ by $\minmoduv$ and refer to it as an $\slthree$ \emph{minimal model}.

\subsection{Automorphisms of $\aslthree$} \label{sec:AffAut}

The automorphisms $\omega \in \grp{D}_6$ of the root system $\roots$ of $\slthree$ all lift to automorphisms of $\aslthree$ that act trivially on $\CC[t,t^{-1}]$, $K$ and $L_0$.  We shall use the same symbols to denote these lifts as in the previous section.  Apart from these lifts, there is another important family of automorphisms of $\aslthree$ known as the \emph{spectral flow} automorphisms $\sfaut{\fcwt{}}$, parametrised by elements $\fcwt{}$ of the coweight lattice $\cwlat \subset \csub$.  More precisely, these automorphisms represent the translations in the extended affine Weyl group $\widetilde{\weyl} = \weyl \ltimes \cwlat$ and act on $\aslthree$ and the Virasoro modes explicitly as follows:
\begin{equation} \label{eq:sf}
	\begin{aligned}
		\sfmod{\fcwt{}}{e^\alpha_n} &= e^\alpha_{n-\pair{\alpha}{\fcwt{}}}, &
		\sfmod{\fcwt{}}{h_n} &= h_n - \killing{\fcwt{}}{h} \delta_{n,0} K, \\
		\sfmod{\fcwt{}}{K} &= K, &
		\sfmod{\fcwt{}}{L_n} &= L_n - \fcwt{n} + \tfrac{1}{2} \killing{\fcwt{}}{\fcwt{}} \delta_{n,0} K,
	\end{aligned}
	\qquad \fcwt{} \in \cwlat,\ \alpha \in \roots,\ h \in \csub,\ n \in \ZZ.
\end{equation}
Here, $\fcwt{n} = \fcwt{} \otimes t^n$ is the $n$-th mode of the affine field $\fcwt{}(z)$ corresponding to $\fcwt{} \in \cwlat \subset \csub$, as per \eqref{eq:UnivVOA}.  We note that the root system automorphisms $\omega \in \grp{D}_6$ and the spectral flow automorphisms $\sfaut{\fcwt{}}$, $\fcwt{} \in \cwlat$, satisfy
\begin{equation} \label{eq:sfD6rels}
  \sfaut{\fcwt{}} \sfaut{\fcwt{}'} = \sfaut{\fcwt{} + \fcwt{}'} \quad \text{and} \quad
  \omega \sfaut{\fcwt{}} \omega^{-1} = \sfaut{\func{\omega}{\fcwt{}}},
\end{equation}
where the action of $\grp{D}_6$ on $\cwlat \subset \csub$ is the usual one given in \eqref{eq:Waction} and \eqref{eq:daction}.

All of these $\aslthree$-automorphisms define automorphisms of the level-$\kk$ vertex algebras corresponding to $\univk$ and their simple quotients $\minmoduv$, though spectral flow does not preserve the conformal structure, hence does not give automorphisms of the \voas{} themselves.  Consequently, these automorphisms induce autoequivalences of the category $\awcatk$ of weight $\univk$-modules, which we identify with the category of smooth weight $\aslthree$-modules, and the full subcategory $\awcatuv$ of weight $\minmoduv$-modules (when $\kk$ satisfies \eqref{eq:ktuv}).  We shall again use the same symbols to denote an automorphism and the associated autoequivalence.  As in \cref{sec:LieAut}, the induced action of these automorphisms on the set of isomorphism classes of modules in $\awcatk$ or $\awcatuv$ satisfies \eqref{eq:sfD6rels}.

We record, for future convenience, how weights and conformal weights change under spectral flow.  Let $v$ be a weight vector in some level-$\kk$ $\aslthree$-module.  If its weight is $\nu$ and its conformal weight is $\Delta$, then the weight and conformal weight of $\sfmod{\fcwt{}}{v}$ are
\begin{equation} \label{eq:sfwts}
	\nu + \kk \fcwt{}^* \quad \text{and} \quad \Delta + \pair{\nu}{\fcwt{}} + \frac{1}{2} \killing{\fcwt{}}{\fcwt{}} \kk,
\end{equation}
respectively, where $\fcwt{}^* = \killing{\fcwt}{\blank} \in \dcsub$.  The weight follows from
\begin{equation}
	h_0 \sfmod{\fcwt{}}{v} = \sfmod{\fcwt{}}{\sfmod{-\fcwt{}}{h_0} v} = \sfmod{\fcwt{}}{(h_0 + \killing{\fcwt{}}{h} K) v} = \pair{\nu + \kk \fcwt{}^*}{h} \, \sfmod{\fcwt{}}{v}
\end{equation}
and a similar computation gives the conformal weight.

\subsection{Irreducible weight $\aslthree$-modules} \label{sec:AffIrr}

We can construct many families of smooth level-$\kk$ weight $\aslthree$-modules, hence $\univk$-modules, by affinising the $\slthree$-modules introduced in \cref{sec:Lie}.  Moreover, the affinisation of an irreducible $\slthree$-module will have a unique irreducible quotient.  We shall indicate this irreducible quotient with a hat: $\affine{\Mod{M}}$ is the irreducible quotient of the level-$\kk$ affinisation of $\Mod{M}$.  In this way, we arrive at irreducible smooth level-$\kk$ $\aslthree$-modules denoted by $\airr{\mu}$, $\asemi{\lambda}{[\mu]}$ and $\arel{\lambda}{[\mu]}$.  We shall refer to these as \hwms, \emph{semirelaxed} \hwms\ and \emph{relaxed} \hwms, respectively.\footnote{We remark that under the general definition in \cite{RidRel15}, all of these modules would be examples of \rhwms.  However, in this case the nomenclature introduced here is convenient and so we shall adopt it, hoping that no confusion will arise.}  We emphasise that the weights $\lambda$ and $\mu$ appearing in this notation are $\slthree$-weights: $\lambda, \mu \in \dcsub$.  They, of course, completely determine their affine counterparts $\affine{\lambda}$ and $\affine{\mu}$ because the latter are fixed to have level $\kk$.

Given any $\aslthree$-module $\affine{\Mod{N}}$, the vectors of minimal
conformal weight (should any exist) are \emph{relaxed} \hwvs{} in the sense
that they would be genuine \hwvs{} except that the requirement to be
annihilated by the $e^i_0$, $i=1,2,3$, has been relaxed, in other words removed.
Such vectors are also known as \emph{ground states}.  If $\affine{\Mod{N}}$ is a quotient of the affinisation of an irreducible $\slthree$-module $\Mod{M}$, then its space of ground states is called the \emph{top space} of $\affine{\Mod{N}}$.  It is, moreover, isomorphic to $\Mod{M}$ as an $\slthree$-module.  The conformal weight of the ground states of $\airr{\lambda}$, $\asemi{\lambda}{[\mu]}$ and $\arel{\lambda}{[\mu]}$ is
\begin{equation}
	\Delta_{\lambda} = \frac{\bilin{\lambda}{\lambda + 2 \wvec}}{2(\kk+3)}.
\end{equation}

The action of an $\slthree$-automorphism $\omega \in \grp{D}_6$ on the modules $\airr{\mu}$, $\asemi{\lambda}{[\mu]}$ and $\arel{\lambda}{[\mu]}$ of $\awcatk$ is easily deduced from the action on their respective top spaces $\firr{\mu}$, $\fsemi{\lambda}{[\mu]}$ and $\frel{\lambda}{[\mu]}$, discussed in \cref{sec:LieIrr}.  The action of the spectral flow functors is more interesting because, in general, $\sfaut{\fcwt{}}$ maps an irreducible weight module whose conformal weights are bounded below to an irreducible weight module whose conformal weights are not bounded below.

A weight module (with \fdim\ weight spaces) whose conformal weights are bounded below is said to be \emph{positive-energy}.  Thus, the full subcategories of positive-energy modules in $\awcatk$ and $\awcatuv$, respectively, are preserved by the action of $\omega$, but not by spectral flow.\footnote{There is a well known exception to this statement when $\uu \ge 3$ and $\vv=1$, so $\kk \in \NN$.  Then, the positive energy category coincides with $\awcat{\uu,1}$ and the category of integrable \hw{} $\aslthree$-modules; the latter is of course preserved by spectral flow.}  Spectral flow does, however, map smooth weight modules to smooth weight modules, so $\awcatk$ and $\awcatuv$ are closed under this action.  We can therefore obtain many new irreducible weight $\univk$- or $\minmoduv$-modules by applying spectral flow to the positive-energy ones already identified.

It is almost always the case that the result of applying spectral flow to a positive-energy module is a module that is not positive-energy.  However, it will be useful for what follows to know the (isomorphism class of the) result when it is positive-energy.  Let $\set{\fcwt{1},\fcwt{2}}$ denote the basis of $\cwlat$ dual to the simple root basis $\set{\sroot{1},\sroot{2}}$ of $\rlat$.
\begin{proposition} \label{prop:+ESF}
	\leavevmode
	\begin{enumerate}
		\item The spectral flows $\sfmod{\fcwt{1}}{\airr{\mu}}$ and $\sfmod{\fcwt{2}}{\airr{\mu}}$ are positive-energy if and only if $\mu \in \pwlat$.  In fact, \label{it:sffdim}
		\begin{equation}
			\begin{aligned}
				\sfmod{\fcwt{1}}{\airr{\mu}}
				&\cong \airr{\kk \fwt{1} + \weylaut{1} \weylmod{2}{\mu}}
				\cong \dynkmod{\airr{\kk \fwt{2} - \weylmod{1}{\mu}}} \\
				\text{and} \quad \sfmod{\fcwt{2}}{\airr{\mu}}
				&\cong \airr{\kk \fwt{2} + \weylaut{2} \weylmod{1}{\mu}}
				\cong \dynkmod{\airr{\kk \fwt{1} - \weylmod{2}{\mu}}}
			\end{aligned}
			\qquad \text{($\mu \in \pwlat$)}.
		\end{equation}
		\item The spectral flows $\sfmod{\fcwt{1}-\fcwt{2}}{\airr{\mu}}$ and $\sfmod{\fcwt{2}-\fcwt{1}}{\airr{\mu}}$ are positive-energy if and only if $\mu_1 \in \NN$ and $\mu_2 \in \NN$, respectively.  Moreover, \label{it:sfdeghw}
		\begin{equation}
			\begin{aligned}
				\sfmod{\fcwt{1}-\fcwt{2}}{\airr{\mu}}
				&\cong \weylmod{2}{\airr{\kk \fwt{2} + \weylaut{2} \weylmod{1}{\mu}}}
				\cong \dynkaut \weylmod{1}{\airr{\kk \fwt{1} - \weylmod{2}{\mu}}} &&& &\text{($\mu_1 \in \NN$)} \\
				\text{and} \quad \sfmod{\fcwt{2}-\fcwt{1}}{\airr{\mu}}
				&\cong \weylmod{1}{\airr{\kk \fwt{1} + \weylaut{1} \weylmod{2}{\mu}}}
				\cong \dynkaut \weylmod{2}{\airr{\kk \fwt{2} - \weylmod{1}{\mu}}} &&& &\text{($\mu_2 \in \NN$)}.
			\end{aligned}
		\end{equation}
		\item The spectral flows $\sfmod{-\fcwt{1}}{\airr{\mu}}$ and $\sfmod{-\fcwt{2}}{\airr{\mu}}$ are always positive-energy: \label{it:sfhw}
		\begin{equation}
			\begin{aligned}
				\sfmod{-\fcwt{1}}{\airr{\mu}}
				&\cong \weylaut{1} \weylmod{2}{\airr{\kk \fwt{2} + \weylaut{2} \weylmod{1}{\mu}}}
				\cong \conjaut \weylmod{2}{\airr{\kk \fwt{1} - \weylmod{2}{\mu}}} \\
				\text{and} \quad
				\sfmod{-\fcwt{2}}{\airr{\mu}}
				&\cong \weylaut{2} \weylmod{1}{\airr{\kk \fwt{1} + \weylaut{1} \weylmod{2}{\mu}}}
				\cong \conjaut \weylmod{1}{\airr{\kk \fwt{2} - \weylmod{1}{\mu}}}.
			\end{aligned}
		\end{equation}
		\item The spectral flow $\sfmod{\fcwt{}}{\asemi{\lambda}{[\mu]}}$ is positive-energy if and only if $\fcwt{} = 0$ or $\fcwt{} = -\fcwt{2}$.  Indeed, \label{it:sfsrel}
		\begin{equation}
			\sfmod{-\fcwt{2}}{\asemi{\lambda}{[\mu]}} \cong \conjaut \weylmod{1}{\asemi{\kk \fwt{2} - \weylmod{1}{\lambda}}{[\kk \fwt{2} - \weylmod{1}{\mu}]}}.
		\end{equation}
		\item The spectral flow $\sfmod{\fcwt{}}{\arel{\lambda}{[\mu]}}$ is positive-energy if and only if $\fcwt{} = 0$. \label{it:sfrel}
	\end{enumerate}
\end{proposition}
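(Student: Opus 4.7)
The common strategy rests on \eqref{eq:sfwts}: for a weight vector $v$ of weight $\nu$ and conformal weight $\Delta$ in a smooth level-$\kk$ module, the image $\sfmod{\fcwt{}}{v}$ has conformal weight $\Delta + \pair{\nu}{\fcwt{}} + \tfrac12 \killing{\fcwt{}}{\fcwt{}}\kk$, whose last summand is a $\fcwt{}$-dependent constant.  Hence $\sfmod{\fcwt{}}{\affine{\Mod{M}}}$ is positive-energy precisely when $\Delta + \pair{\nu}{\fcwt{}}$ is bounded below as $v$ ranges over the weight vectors of $\affine{\Mod{M}}$.  When this holds, the resulting irreducible is determined up to isomorphism by its top space regarded as an $\slthree$-module, which is the $\sfaut{\fcwt{}}$-image of the vectors minimising $\Delta + \pair{\nu}{\fcwt{}}$.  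The plan for each part is therefore to determine when such a minimum exists and, when it does, to extract the $\slthree$-module structure of these minimisers.

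For parts (iv) and (v), the ground-state support is decisive.  In the dense case (v), this support is all of $\mu + \rlat$; writing $\fcwt{} = c_1 \fcwt{1} + c_2 \fcwt{2}$ with $c_i \in \ZZ$, the pairing $\pair{\mu + n_1\sroot{1} + n_2\sroot{2}}{\fcwt{}} = \pair{\mu}{\fcwt{}} + n_1 c_1 + n_2 c_2$ is bounded below on $\ZZ^2$ if and only if $c_1 = c_2 = 0$.  In the semidense case (iv), the ground-state support $\mu + \ZZ\sroot{1} - \NN\sroot{2}$ forces $c_1 = 0$ (from $\sroot{1}$-unboundedness) and $c_2 \le 0$ (from $-\sroot{2}$-unboundedness), but an additional constraint arises from the affine raising mode $e^{\sroot{2}}_{-1}$: each application increases $\Delta$ by $1$ and $\pair{\nu}{\fcwt{}}$ by $c_2$, so positive-energy in the image requires $1 + c_2 \ge 0$.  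Combined, these yield $\fcwt{} \in \set{0, -\fcwt{2}}$.  Identifying the image of $\sfaut{-\fcwt{2}}$ then reduces to recognising that its top space is a semidense $\slthree$-module and matching the highest weight by normalising the twisted Borel against the standard one using the relations \eqref{eq:sfD6rels}.

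For parts (i)--(iii), we treat $\airr{\mu}$.  Since $\pair{\sroot{1}}{\fcwt{1}} = 1$ and $\pair{\sroot{2}}{\fcwt{1}} = 0$, we have $\pair{\nu}{\fcwt{1}} = \pair{\mu}{\fcwt{1}} - m_1$ for any weight $\nu = \mu - m_1\sroot{1} - m_2\sroot{2}$ of $\firr{\mu}$; positive-energy after $\sfaut{\fcwt{1}}$ reduces to $m_1$ being bounded above on $\firr{\mu}$, since every affine raising mode is readily checked to non-decrease $\Delta + \pair{\nu}{\fcwt{1}}$.  This fails whenever $\firr{\mu}$ is \infdim: if $\mu_1 \notin \NN$, the iterates $(f^{\sroot{1}})^N v_{\text{hw}}$ are nonzero for all $N$, whilst if $\mu_2 \notin \NN$, the vectors $(f^{\sroot{2}})^k v_{\text{hw}}$ (which are annihilated by $e^{\sroot{1}}$ and have $h^1$-eigenvalue $\mu_1 + k$) admit $(f^{\sroot{1}})^N$-descendants with $m_1 = \mu_1 + k$ unbounded as $k \to \infty$.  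Hence $\mu \in \pwlat$ is necessary, and when it holds, $m_1 \le \mu_1 + \mu_2$, saturated at the Weyl-extremal weight $\weylaut{1}\weylmod{2}{\mu}$.  The image of this extremal vector is then a \hwv\ of weight $\kk \fwt{1} + \weylaut{1}\weylmod{2}{\mu}$, giving the first isomorphism in (i); the alternative expression in terms of $\dynkaut$ follows from $\dynkmod{\airr{\nu}} \cong \airr{\dynkmod{\nu}}$ together with an elementary Weyl computation.  The remaining spectral flows in (i)--(iii) are handled analogously: the sign of $\pair{\sroot{i}}{\fcwt{}}$ dictates which of $\mu_1, \mu_2 \in \NN$ is required.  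For instance, the pairing $\pair{\sroot{2}}{\fcwt{1} - \fcwt{2}} = -1$ bounds $\pair{\nu}{\fcwt{}}$ from below in the $\sroot{2}$ direction, relaxing the condition to $\mu_1 \in \NN$; meanwhile, the negative spectral flows $-\fcwt{i}$ impose no condition at all, as both $\pair{\sroot{j}}{-\fcwt{i}}$ are nonpositive.

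The principal obstacle is not the positive-energy criterion itself---which is a direct consequence of \eqref{eq:sfwts}---but rather the precise identification of each image with one of the named irreducibles, since this requires disentangling composite Weyl, Dynkin and conjugation twists via repeated use of \eqref{eq:sfD6rels}.  The semidense case (iv) is particularly delicate because the top space of the image is not concentrated at a single extremal weight but is generated by a family of ground-state vectors together with their $(e^{\sroot{2}}_{-1})^K$- and $(e^{\sroot{3}}_{-1})^L$-descendants, all sharing the same minimal conformal weight after spectral flow; confirming that this assembly is indeed the claimed semidense $\slthree$-module ultimately invokes Mathieu's classification of irreducible coherent families recalled in \cref{sec:LieIrr}.
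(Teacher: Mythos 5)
Your overall strategy coincides with the paper's: use \eqref{eq:sfwts} to turn positive-energy into boundedness below of $\Delta + \pair{\nu}{\fcwt{}}$, reduce this to the top space (for the coweights in play, every creation mode $x_{-n}$ with root $\alpha$ shifts this quantity by $n + \pair{\alpha}{\fcwt{}} \ge 0$), read the necessary conditions off the weight supports, and identify the flowed module through an extremal (highest-weight) vector together with the classification recalled in \cref{sec:LieIrr}. Parts (i)--(iii) and (v) are treated at essentially the same level of detail as the paper's own outline, and your identification of the extremal weight $\weylaut{1}\weylmod{2}{\mu}$ matches the paper's bookkeeping with composite twists.

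The one step that needs shoring up is your exclusion of $\fcwt{} = -p\fcwt{2}$, $p \ge 2$, in (iv). Your constraint $1 + c_2 \ge 0$ comes from applying $e^2_{-1}$ repeatedly to a ground state, each application lowering the flowed conformal weight by $p-1$; but this only bites if $(e^2_{-1})^N v \ne 0$ for arbitrarily large $N$, equivalently if $\asemi{\lambda}{[\mu]}$ has nonzero vectors of weight $\mu + m_1\sroot{1} + m_2\sroot{2}$, $m_2>0$, at conformal weight exactly $\Delta + m_2$. This is not automatic in the irreducible quotient and you do not justify it. The paper obtains it as a byproduct of the $p=1$ case: having shown $\sfmod{-\fcwt{2}}{\asemi{\lambda}{[\mu]}}$ is positive-energy, its ground states form an irreducible weight $\slthree$-module containing the images of the $v_{m_1,0}$, and the classification of \cref{sec:LieIrr} then forces every $v_{m_1,m_2}$ with $m_2>0$ to map to a ground state, i.e.\ to have conformal weight $\Delta + m_2$; only with this in hand is $p \ge 2$ excluded. (A direct alternative is to run the $\sltwo$-triple $\set{e^2_{-1}, f^2_1, h^2_0 - K}$ on suitably chosen ground states, but some argument is required.) A smaller omission: in (iv) you only gesture at ``normalising the twisted Borel'' to pin down the parameters, whereas the precise labels $\kk\fwt{2} - \weylmod{1}{\lambda}$ and $[\kk\fwt{2} - \weylmod{1}{\mu}]$ are extracted in the paper by tracking a vector annihilated by $e^1_0$ and noting that the extra $\weylaut{1}$-twist (rather than conjugation alone) is what produces a highest-weight, rather than lowest-weight, $\gltwo$-submodule.
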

\begin{proof}
	We start with \ref{it:sfrel}.  The top space of $\arel{\lambda}{[\mu]}$ is an irreducible dense $\slthree$-module with weight support $\mu + \rlat$.  A ground state $v \in \arel{\lambda}{[\mu]}$ then has weight $\mu + m_1 \sroot{1} + m_2 \sroot{2}$, for some $m_1, m_2 \in \ZZ$, and its conformal weight is $\Delta$, independent of $m_1$ and $m_2$.  By \eqref{eq:sfwts}, the conformal weight of $\sfmod{\fcwt{}}{v}$ is then $\Delta' + m_1 \pair{\sroot{1}}{\fcwt{}} + m_2 \pair{\sroot{2}}{\fcwt{}}$, where $\Delta' = \Delta + \pair{\mu}{\fcwt{}} + \frac{1}{2} \killing{\fcwt{}}{\fcwt{}} \kk$.  This is unbounded below as $m_1$ and $m_2$ range over $\ZZ$ unless $\pair{\sroot{1}}{\fcwt{}} = \pair{\sroot{2}}{\fcwt{}} = 0$.

	The proof of \ref{it:sfsrel} starts out similarly, but because a weight of the top space of $\asemi{\lambda}{[\mu]}$ again has the form $\mu + m_1 \sroot{1} + m_2 \sroot{2}$ with $m_1 \in \ZZ$, but now with $m_2 \in \ZZ_{\le0}$, the conformal weights of the spectral flow of the top space are unbounded below unless $\pair{\sroot{1}}{\fcwt{}} = 0$ and $\pair{\sroot{2}}{\fcwt{}} \le 0$, that is unless $\fcwt{} = -p \fcwt{2}$ for some $p \in \NN$.

	Take $\fcwt{} = -\fcwt{2}$ and let $v_{m_1,m_2} \in \asemi{\lambda}{[\mu]}$ denote a state of minimal conformal weight among those whose weight is $\mu + m_1 \sroot{1} + m_2 \sroot{2}$, where $m_1,m_2 \in \ZZ$.  For $m_2 \le 0$, these are the ground states and we denote their common conformal weight by $\Delta$.  The conformal weight of $\sfmod{-\fcwt{2}}{v_{m_1,m_2}}$, $m_2\le0$, is then $\Delta' - m_2$, where $\Delta' = \Delta - \pair{\mu}{\fcwt{2}} + \frac{1}{2} \killing{\fcwt{2}}{\fcwt{2}} \kk$.  This is obviously minimised when $m_2=0$.  Of all the ground states of $\asemi{\lambda}{[\mu]}$, only the $v_{m_1,0}$ have spectral flow images that might be ground states in $\sfmod{-\fcwt{2}}{\asemi{\lambda}{[\mu]}}$.

	Consider next the state $v_{m_1,m_2}$, where $m_1 \in \ZZ$ and $m_2 \in \ZZ_{>0}$.  This is not a ground state of $\asemi{\lambda}{[\mu]}$, hence its conformal weight is $\Delta + n$, for some $n>0$.  In fact, we have $n \ge m_2$ because $v_{m_1,m_2}$ can only be obtained from the $v_{m_1,0}$ by acting with modes that include at least $m_2$ of the $e^2_{-r}$ or $e^3_{-s}$, $r,s>0$.  The conformal weight of $\sfmod{-\fcwt{2}}{v_{m_1,m_2}}$ is then $\Delta' + n - m_2$.  As this is bounded below by the conformal weight $\Delta'$ of the $\sfmod{-\fcwt{2}}{v_{m_1,0}}$, it follows that $\sfmod{-\fcwt{2}}{\asemi{\lambda}{[\mu]}}$ is positive-energy and the $\sfmod{-\fcwt{2}}{v_{m_1,0}}$ are indeed ground states.

	For $m_2>0$, $v_{m_1,m_2}$ also maps to a ground state in $\sfmod{-\fcwt{2}}{\asemi{\lambda}{[\mu]}}$ if and only if $n=m_2$.  However, the ground states must form an irreducible $\slthree$-module, so combining the results thus far with the classification in \cref{sec:LieIrr} forces the $v_{m_1,m_2}$ to map to ground states, hence each has $n=m_2$, for all $m_1 \in \ZZ$ and $m_2 \in \ZZ_{>0}$.  The top space of $\sfmod{-\fcwt{2}}{\asemi{\lambda}{[\mu]}}$ therefore has weight support $\mu + \ZZ \sroot{1} + \NN \sroot{2}$.  To get a standard semidense weight support, we must either conjugate or conjugate and twist by $\weylaut{1}$.  The latter is more convenient, as we shall see, so we conclude that
	\begin{equation} \label{eq:wctwist}
		\weylaut{1} \conjaut \sfmod{-\fcwt{2}}{\asemi{\lambda}{[\mu]}} \cong \asemi{\lambda'}{[\mu']},
	\end{equation}
	for some $\lambda' \in \dcsub$ and $[\mu'] \in (\lambda' + \CC \sroot{1}) / \ZZ \sroot{1}$.

	To compute $[\mu']$, we note that the above identification of ground states implies that it is obtained from $[\mu]$ and \eqref{eq:sfwts} as follows:
	\begin{equation}
		[\mu'] = [\weylaut{1} \conjmod{\mu - \kk \fcwt{2}^*}] = [\weylmod{1}{\kk \fwt{2} - \mu}] = [\kk \fwt{2} - \weylmod{1}{\mu}].
	\end{equation}
	To determine $\lambda'$, note that $\lambda$ is the highest weight of a \hw\ submodule of the irreducible semisimple coherent family of $\gltwo$-modules from which $\fsemi{\lambda}{[\mu]}$ was constructed.  It follows that there is a vector $v$ of weight $\lambda$ in $\asemi{\lambda}{[\lambda]}$ that is annihilated by $e^1_0$.  Spectral flow, conjugating and twisting by $\weylaut{1}$ give
	\begin{equation}
		e^1_0 \weylaut{1} \conjaut \sfmod{-\fcwt{2}}{v} \propto \weylaut{1} f^1_0 \conjaut \sfmod{-\fcwt{2}}{v} \propto \weylaut{1} \conjaut e^1_0 \sfmod{-\fcwt{2}}{v} = \weylaut{1} \conjaut \sfmod{-\fcwt{2}}{e^1_0 v} = 0
	\end{equation}
	(note that $\weylmod{1}{e^1_0}$ and $\conjmod{e^1_0}$ are proportional, but not necessarily equal, to $f^1_0$).  It follows that $\weylaut{1} \conjaut \sfmod{-\fcwt{2}}{v}$ generates an \infdim\ \hw\ $\gltwo$-submodule of $\asemi{\lambda'}{[\lambda']}$.  We may therefore identify $\lambda'$ with its highest weight:
	\begin{equation}
		\lambda' = \kk \fwt{2} - \weylmod{1}{\lambda}.
	\end{equation}
	It is easy to check that $\mu' \in \lambda' + \CC \sroot{1}$ because $\mu \in \lambda + \CC \sroot{1}$.  Note that if we had omitted the $\weylaut{1}$-twist in \eqref{eq:wctwist}, then applying spectral flow and conjugation would have resulted in an \infdim\ \lw\ submodule.

	Finally, take $\fcwt{} = -p \fcwt{2}$ with $p \in \ZZ_{>1}$.  As we have established above that $n=m_2$ whenever $m_2>0$, the $\sfmod{-p \fcwt{2}}{v_{m_1,m_2}}$ with $m_2>0$ have conformal weights $\Delta' - (p-1) m_2$.  This is unbounded below as $m_2 \to \infty$, hence $\sfmod{-p \fcwt{2}}{\asemi{\lambda}{[\mu]}}$ is not positive-energy.  This completes the proof of \ref{it:sfsrel}.

	The \hw\ cases \ref{it:sffdim}--\ref{it:sfhw} follow similarly, so we only outline the steps.  The setup for these cases is $\dynkaut$-invariant, so we only need discuss one identification each.  First, conformal weight considerations show that $\sfmod{-\fcwt{1}}{\airr{\mu}}$ is always positive-energy and explicit calculation shows that the image of the \hwv\ of $\airr{\mu}$ under $\weylaut{2} \weylaut{1} \sfaut{-\fcwt{1}}$ is a \hwv.  One therefore just needs to calculate its weight.  On the other hand, the same conformal weight considerations require $\mu_1 \in \NN$ when $\fcwt{} = \fcwt{1}-\fcwt{2}$.  This time, the \hw\ condition is preserved by $\weylaut{2} \sfaut{\fcwt{1}-\fcwt{2}} \weylaut{1}$.  Finally, $\sfmod{\fcwt{1}}{\airr{\mu}}$ is positive-energy if and only if $\mu \in \pwlat$ and $\sfaut{\fcwt{1}} \weylaut{1} \weylaut{2}$ allows us to identify the new highest weight.
\end{proof}
\noindent As we shall see, one may need to iterate the identifications given above for the spectral flows of the $\airr{\mu}$ in order to find all cases in which the spectral flow of an irreducible \hw\ $\aslthree$-module is again positive-energy.

\section{Admissible-level $\slthree$ minimal models $\minmoduv$} \label{sec:MinMod}

As noted in \cref{sec:AffAlg}, the level $\kk$ of a vertex algebra associated with $\slthree$ is said to be admissible if it has the form \eqref{eq:ktuv} with $\uu \ge 3$.  In this \lcnamecref{sec:MinMod}, we review the classification of \hw\ $\minmoduv$-modules, when $\kk$ is admissible, and extend it to all relaxed and semirelaxed \hw\ $\minmoduv$-modules.

\subsection{Irreducible \hw\ $\minmoduv$-modules} \label{sec:MinModHW}

For each $\ell \in \NN$, let $\apwlat{\ell}$ denote the set of dominant-integral level-$\ell$ weights of $\aslthree$, that is the set of weights $\affine{\lambda}$ whose Dynkin labels satisfy $\lambda_i \in \NN$ and $\lambda_0 + \lambda_1 + \lambda_2 = \ell$.\footnote{We shall generally drop the hat from affine Dynkin labels, trusting that this will not cause confusion.}  In what follows, we shall sometimes identify $\affine{\lambda}$ with its triple $(\lambda_0, \lambda_1, \lambda_2)$ of Dynkin labels when convenient.  Given an affine weight $\affine{\lambda}$, we write $\lambda$ for its projection onto $\dcsub$, so $\lambda$ is obtained from $\affine{\lambda}$ by setting $\lambda_0$ to $0$.  Of course, once a level has been fixed, $\affine{\lambda}$ is completely determined by $\lambda$.

In \cite{KacMod88}, Kac and Wakimoto introduced a finite set $\admuv$ of \emph{admissible weights} for each admissible level $\kk$.  For $\aslthree$, an equivalent characterisation \cite{KacCla89} of these weights involves writing them in the form
\begin{equation} \label{eq:admwt}
	\affine{\lambda} = \weylaut{} \cdot \brac*{\affine{\lambda}^I - \fracuv \, \affine{\lambda}^{F,\weylaut{}}},
\end{equation}
where $\weylaut{} \in \set{\wun, \weylaut{1}} \subset \weyl$,
$\affine{\lambda}^I \in \apwlat{\uu-3}$ is the \emph{integral part} of
$\affine{\lambda}$, $\affine{\lambda}^{F,\weylaut{}} \in \apwlat{\vv-1}$ is
the \emph{fractional part} of $\affine{\lambda}$, and
$\affine{\lambda}^{F,\weylaut{1}}$ has Dynkin label $\lambda^{F,\weylaut{1}}_1
\ne 0$.  We will describe an admissible weight $\affine{\lambda}$ as being of
either $\weylaut{} = \wun$ or $\weylaut{} = \weylaut{1}$ type, according as to
which $\weylaut{}$ is used in \eqref{eq:admwt}.  The sets, $\admuv^{\wun}$ and
$\admuv^{\weylaut{1}}$, of $\weylaut{} = \wun$ and $\weylaut{} = \weylaut{1}$ type admissible weights, respectively, are disjoint \cite{KacCla89}: $\admuv = \admuv^{\wun} \sqcup \admuv^{\weylaut{1}}$.

The classification of admissible-level \hw\ $\minmoduv$-modules is a special case of a general \hw\ classification result of Arakawa.
\begin{theorem}[{\cite[Main~Thm.]{AraRat16}}] \label{thm:hwclass}
	When $\kk$ is admissible, the irreducible level-$\kk$ \hw\ $\aslthree$-module $\airr{\lambda}$ is an $\minmoduv$-module if and only if $\affine{\lambda} \in \admuv$.  Moreover, every \hw\ $\minmoduv$-module is irreducible.
\end{theorem}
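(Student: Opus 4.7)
The plan is to invoke Arakawa's Main Theorem from \cite{AraRat16} and specialize it to $\slthree$ at admissible level $\kk$. The statement contains two assertions — the classification of HW $\minmoduv$-modules via admissible weights, and the automatic irreducibility of every HW $\minmoduv$-module — and both are already contained in that reference; the task here is to match notation and unpack what the general statement says for $\slthree$.

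To that end, I would first reduce the classification to a Zhu algebra computation. By Zhu's theorem, isomorphism classes of irreducible positive-energy $\minmoduv$-modules are in bijection with isomorphism classes of irreducible $\zhu{\minmoduv}$-modules. Since $\minmoduv$ is the simple quotient of $\univk$ and $\zhu{\univk} \cong \envalg{\slthree}$, the Zhu algebra $\zhu{\minmoduv}$ is a quotient $\envalg{\slthree}/I$ for a two-sided ideal $I$ cut out by the image under the Zhu map of a set of generating singular vectors for the maximal ideal of $\univk$. The task then reduces to identifying which simple HW $\slthree$-modules $\firr{\lambda}$ descend to $\envalg{\slthree}/I$-modules.

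The decisive geometric input, due to Arakawa, is the computation of the associated variety $X_{\minmoduv}$: at admissible level it equals the closure of a specific nilpotent orbit in $\slthree$ depending on $\uu$ and $\vv$. This constrains the central characters and primitive annihilators of the simple $\zhu{\minmoduv}$-modules. Combined with the explicit singular vectors in $\univk$ (of Malikov--Feigin--Fuchs type, or equivalently produced by screening operators acting between Wakimoto modules), one extracts precisely the Kac--Wakimoto characterization \eqref{eq:admwt}. For the irreducibility assertion, I would appeal to the parallel Arakawa result that the category of ordinary $\minmoduv$-modules at admissible level is semisimple, which is proved by showing the relevant $\mathrm{Ext}^1$-groups in $\ocat$ vanish, thereby precluding any nontrivial self-extension within the class of HW $\minmoduv$-modules.

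The main obstacle in this whole program is the associated-variety computation: it is the geometric heart of Arakawa's theorem and requires BRST reduction, screening-operator technology and a nontrivial nilpotent-orbit-closure argument. Once that is in place, specializing to $\slthree$ and matching the answer to the Kac--Wakimoto parameterization \eqref{eq:admwt} is essentially bookkeeping, with the mild subtlety that one must keep careful track of the two $\weylaut{}$-types in $\admuv = \admuv^{\wun} \sqcup \admuv^{\weylaut{1}}$ when reading off which HW modules survive.
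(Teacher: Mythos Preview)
Your proposal is correct and matches the paper's approach: the paper does not give its own proof of this theorem at all, but simply quotes it as a specialisation of Arakawa's Main Theorem in \cite{AraRat16}, exactly as you do. Your additional sketch of Arakawa's strategy (Zhu algebra, associated variety, semisimplicity of ordinary modules) is accurate background but goes beyond what the paper records.
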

\noindent It follows that there are $\abs*{\admuv} = \frac{1}{2} (\uu-1) (\uu-2) \vv^2$ irreducible \hw\ $\minmoduv$-modules, up to isomorphism.

When $\uu \ge 3$ and $\vv = 1$, so $\kk \in \NN$, the admissibility conditions above reduce to $\affine{\lambda}^I \in \apwlat{\uu-3}$ and $\affine{\lambda}^{F, \weylaut{}} = 0$, hence $\admwts{\uu}{1}^{\wun} = \apwlat{\kk}$ and $\admwts{\uu}{1}^{\weylaut{1}} = \varnothing$.  In this case, Arakawa's classification reproduces the well known result \cite{GepStr86,FreVer92} that the irreducible modules over the rational \wzw\ \voa\ $\minmod{\uu}{1}$ are the integrable \hw\ ones.  We will therefore be chiefly interested in the case $\vv \ge 2$ in what follows.

Note that the Dynkin labels of a weight $\affine{\lambda} \in \admuv^{\wun}$ have the form
\begin{subequations} \label{eq:admdynk}
	\begin{equation} \label{eq:adm1dynk}
		\lambda_i = \lambda^I_i - \fracuv \, \lambda^{F,\wun}_i, \quad i=0,1,2.
	\end{equation}
	Since $\gcd \set{\uu,\vv} = 1$ and $0 \le \lambda^{F,\wun}_i \le
        \vv-1$, it follows for $\weylaut{} = \wun$ type admissible weights that $\lambda_i \in \NN$ if and only if $\lambda^{F,\wun}_i = 0$.  Similarly, the Dynkin labels of $\affine{\lambda} \in \admuv^{\weylaut{1}}$ have the form
	\begin{equation} \label{eq:admwdynk}
		\lambda_0 = %\lambda^I_0 + \lambda^I_1 + 1 - \fracuv \brac*{\lambda^{F,\weylaut{1}}_0 + \lambda^{F,\weylaut{1}}_1} =
		\uu - 2 - \lambda^I_2 - \fracuv \brac*{\vv - 1 - \lambda^{F,\weylaut{1}}_2}, \quad
		\lambda_1 = %-\lambda^I_1 - 2 + \fracuv \lambda^{F,\weylaut{1}}_1 =
		\uu - 2 - \lambda^I_1 - \fracuv \brac*{\vv - \lambda^{F,\weylaut{1}}_1}, \quad
		\lambda_2 = %\lambda^I_1 + \lambda^I_2 + 1 - \fracuv \brac*{\lambda^{F,\weylaut{1}}_1 + \lambda^{F,\weylaut{1}}_2} =
		\uu - 2 - \lambda^I_0 - \fracuv \brac*{\vv - 1 - \lambda^{F,\weylaut{1}}_0}.
	\end{equation}
\end{subequations}
Since $1 \le \lambda^{F,\weylaut{1}}_1 \le \vv-1$ and $0 \le \lambda^{F,\weylaut{1}}_0, \lambda^{F,\weylaut{1}}_2 \le \vv-2$, a $\weylaut{} = \weylaut{1}$ admissible weight never has an integral Dynkin label.
\begin{proposition} \label{prop:fdimtop}
	When $\kk$ is admissible, the $\minmoduv$-module $\airr{\lambda}$ has a \fdim\ top space if and only if $\affine{\lambda} \in \admuv^{\wun}$ and $\lambda^{F,\wun}_1 = \lambda^{F,\wun}_2 = 0$.
\end{proposition}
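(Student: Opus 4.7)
The plan is to combine the classification \cref{thm:hwclass} of \hw\ $\minmoduv$-modules with the explicit form \eqref{eq:admdynk} of admissible Dynkin labels, reducing the problem to a classical finite-dimensionality criterion for $\slthree$-modules.

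First I would recall from \cref{sec:AffIrr} that the top space of $\airr{\lambda}$ is isomorphic to $\firr{\lambda}$ as an $\slthree$-module. Hence the top space is \fdim\ if and only if $\firr{\lambda}$ is \fdim, which by the classical theorem of the highest weight holds if and only if $\lambda_1,\lambda_2 \in \NN$. By \cref{thm:hwclass}, the admissibility of $\kk$ forces $\affine{\lambda} \in \admuv = \admuv^{\wun} \sqcup \admuv^{\weylaut{1}}$, so it suffices to determine, within each of these two disjoint subsets, for which weights the finite Dynkin labels $\lambda_1$ and $\lambda_2$ are nonnegative integers.

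For the $\weylaut{} = \wun$ case, formula \eqref{eq:adm1dynk} reads $\lambda_i = \lambda^I_i - \tfracuv\,\lambda^{F,\wun}_i$ with $\lambda^I_i \in \NN$ and $0 \le \lambda^{F,\wun}_i \le \vv-1$. Since $\gcd\set{\uu,\vv} = 1$, the rational number $\tfracuv\,\lambda^{F,\wun}_i$ is an integer only when $\lambda^{F,\wun}_i = 0$, and in that case $\lambda_i = \lambda^I_i \in \NN$ automatically. This yields the ``if'' direction and half of ``only if''. For the $\weylaut{} = \weylaut{1}$ case, I would just point to the observation already made in the text immediately after \eqref{eq:admwdynk}: the constraints $1 \le \lambda^{F,\weylaut{1}}_1 \le \vv-1$ and $0 \le \lambda^{F,\weylaut{1}}_0, \lambda^{F,\weylaut{1}}_2 \le \vv-2$, together with $\gcd\set{\uu,\vv}=1$, force each of $\lambda_0,\lambda_1,\lambda_2$ in \eqref{eq:admwdynk} to be non-integral; in particular $\lambda_1 \notin \NN$, so no $\weylaut{} = \weylaut{1}$ type admissible weight can contribute.

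There is no real obstacle here; the argument is essentially a bookkeeping exercise once the classification \cref{thm:hwclass} and the parametrisation \eqref{eq:admdynk} are in hand. The only point requiring care is the coprimality argument that rules out accidental integrality of $\tfracuv\,\lambda^{F,\wun}_i$, but this is immediate from $\gcd\set{\uu,\vv} = 1$ and the bound $\lambda^{F,\wun}_i < \vv$.
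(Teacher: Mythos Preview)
Your argument is correct and matches the paper's approach: the proposition is stated without proof there because it follows immediately from the observations in the preceding paragraph (that $\lambda_i \in \NN$ iff $\lambda^{F,\wun}_i = 0$ for $\weylaut{}=\wun$ type, and that $\weylaut{}=\weylaut{1}$ type weights never have integral Dynkin labels), together with the classical fact that $\firr{\lambda}$ is \fdim\ iff $\lambda \in \pwlat$. You have simply made explicit what the paper leaves to the reader.
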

\noindent There are therefore $\frac{1}{2} (\uu-1)(\uu-2)$ irreducible \hw\ $\minmoduv$-modules with \fdim\ top spaces, up to isomorphism.

\subsection{Irreducible \srhw\ $\minmoduv$-modules} \label{sec:MinModSHW}

For each $\uu \ge 3$ and $\vv \ge 2$, we shall show that $\minmoduv$ admits \srhwms.  This will follow from the algorithm, recently detailed and illustrated in \cite{KawRel19}, for classifying the (semi)\rhwms\ of a general affine \voa, given the \hw\ classification.  Here, we determine the result of this algorithm under the assumption that the parabolic is $\para$.  The analogous classification for the parabolic $\dynkmod{\para}$ follows immediately.

Since \srhwms\ have semidense top spaces, we project each of the weights $\lambda \in \dcsub$ corresponding to $\affine{\lambda} \in \admuv$ onto the weight space of the simple ideal $\alg{s} \cong \sltwo$ of the Levi factor $\levi \cong \gltwo$.  If the result $\pi(\lambda)$ is the highest weight of an \infdim\ irreducible \hw\ $\alg{s}$-module, then it determines an irreducible semisimple coherent family of $\alg{s}$-modules as in \cref{sec:LieIrr}.  Tensoring with the irreducible $\glone$-module of weight $\lambda - \pi(\lambda)$, we obtain an irreducible semisimple coherent family $\coh{\lambda}$ of $\levi$-modules.  When the direct summand $\coh{\lambda}_{[\mu]}$, $\mu \in \lambda + \CC \sroot{1}$, is irreducible, inducing and taking irreducible quotients results in the irreducible semidense $\slthree$-module $\fsemi{\lambda}{[\mu]}$ (\cref{sec:LieIrr}).  The irreducible quotients $\asemi{\lambda}{[\mu]}$ of the affinisations then exhaust the irreducible \srhw\ $\minmoduv$-modules for the parabolic $\para$ (up to isomorphism) \cite{KawRel19}.  However, one still has to determine when $\coh{\lambda}_{[\mu]}$ is irreducible.

As the subalgebra $\levi \subset \slthree$ has positive root vector $e^1$, the orthogonal projection is given by
\begin{equation}
	\pi(\lambda) = \frac{\bilin{\lambda}{\sroot{1}}}{\norm{\sroot{1}}^2} \sroot{1} = \lambda_1 \, \frac{1}{2} \sroot{1}.
\end{equation}
This then defines an \infdim\ irreducible \hw\ $\alg{s}$-module, hence a family of \srhw\ $\minmoduv$-modules, if and only if $\lambda_1 \notin \NN$.  Comparing with \eqref{eq:admdynk}, we learn that the $\affine{\lambda} \in \admuv^{\wun}$ define semirelaxed modules if and only if $\lambda^{F,\wun}_1 \ne 0$ and the $\affine{\lambda} \in \admuv^{\weylaut{1}}$ always define semirelaxed modules.  We shall set
\begin{equation}
	\sadmuv^1 = \set*{\affine{\lambda} \in \admuv^{\wun} \st \lambda^{F,\wun}_1 \ne 0} \quad \text{and} \quad \sadmuv = \sadmuv^1 \sqcup \admuv^{\weylaut{1}},
\end{equation}
noting also that $\sadmwts{\uu}{1} = \varnothing$.

We may therefore label the \srhw\ $\minmoduv$-modules by weights $\lambda \in \dcsub$, corresponding to $\affine{\lambda} \in \sadmuv$, and cosets $[\mu] \in \dcsub / \ZZ \sroot{1}$, $\mu \in \lambda + \CC \sroot{1}$.  However, this labelling is redundant because different $\lambda$ may correspond to the same irreducible semisimple coherent family of $\levi$-modules.  As the relevant $\alg{s}$-weights $\pi(\lambda)$ are never integral, there are always two such $\lambda$ and they are related by the shifted action of the Weyl reflection of $\alg{s}$.  In particular, if $\affine{\lambda} \in \sadmuv^1$, then it corresponds to the same semirelaxed modules as $\weylaut{1} \cdot \affine{\lambda} \in \admuv^{\weylaut{1}}$.  Since $\weylaut{1} \cdot$ gives a bijection between $\sadmuv^1$ and $\admuv^{\weylaut{1}}$, we may thus restrict attention to the former set.

Finally, recall that $\asemi{\lambda}{[\mu]}$ is irreducible if and only if the direct summand $\coh{\lambda}_{[\mu]}$ of the coherent family $\coh{\lambda}$ of $\levi$-modules is irreducible.  As this family is semisimple, $\coh{\lambda}_{[\mu]}$ being reducible is equivalent to it having an \infdim\ \hw\ submodule.  By the above, we must then have $[\mu] = [\lambda]$ or $[\mu] = [\weylaut{1} \cdot \lambda]$.  We emphasise that $[\lambda]$ and $[\weylaut{1} \cdot \lambda]$ are distinct cosets because $\affine{\lambda} \in \sadmuv$ implies that $\lambda_1 \notin \ZZ$.

We have therefore arrived at the classification of irreducible \srhw\ $\minmoduv$-modules.
\begin{proposition} \label{prop:srhwclass}
	When $\kk$ is admissible, every irreducible \srhw\ $\minmoduv$-module is isomorphic to one, and only one, of the form $\asemi{\lambda}{[\mu]}$, where $\affine{\lambda} \in \sadmuv^1$ and $\mu \in \lambda + \CC \sroot{1}$ satisfies $[\mu] \ne [\lambda], [\weylaut{1} \cdot \lambda]$.
\end{proposition}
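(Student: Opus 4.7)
The plan is to assemble the classification from the algorithm of \cite{KawRel19}. Combined with \cref{thm:hwclass}, that algorithm reduces the problem for the parabolic $\para$ to two subtasks: first, identifying those admissible weights $\affine{\lambda} \in \admuv$ whose projection onto the $\alg{s} \cong \sltwo$ weight space is the highest weight of an infinite-dimensional irreducible \hw\ $\alg{s}$-module; and second, determining when the resulting direct summand $\coh{\lambda}_{[\mu]}$ of the associated irreducible semisimple coherent family of $\levi$-modules is itself irreducible.

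For the first subtask, the orthogonal projection works out to $\pi(\lambda) = \tfrac{1}{2}\lambda_1 \sroot{1}$, so the requirement is $\lambda_1 \notin \NN$. A short case analysis using the Dynkin-label formulas \eqref{eq:admdynk} then shows that $\affine{\lambda} \in \admuv^{\wun}$ qualifies precisely when $\lambda^{F,\wun}_1 \ne 0$, while every $\affine{\lambda} \in \admuv^{\weylaut{1}}$ qualifies, producing the label set $\sadmuv = \sadmuv^1 \sqcup \admuv^{\weylaut{1}}$.

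For the second subtask, the key structural input from \cref{sec:LieIrr} is that an irreducible semisimple coherent family of $\levi$-modules is determined by its \hw\ composition factor only up to the shifted action of the Weyl reflection $\weylaut{1}$ of $\alg{s}$, and that the direct summand $\coh{\lambda}_{[\mu]}$ is reducible exactly when $[\mu] = [\lambda]$ or $[\mu] = [\weylaut{1} \cdot \lambda]$. To eliminate the resulting labelling redundancy, one verifies, using \eqref{eq:admdynk}, that $\affine{\lambda} \mapsto \weylaut{1} \cdot \affine{\lambda}$ restricts to a bijection $\sadmuv^1 \to \admuv^{\weylaut{1}}$, so that restricting the label set to $\sadmuv^1$ removes all overcounting. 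The distinctness of the excluded cosets $[\lambda]$ and $[\weylaut{1} \cdot \lambda]$ follows because $\lambda_1 \notin \ZZ$ whenever $\affine{\lambda} \in \sadmuv^1$.

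The main subtlety, rather than any single computation, lies in matching the two sources of redundancy --- the $\weylaut{1}$-ambiguity of the coherent-family labels versus the $\weylaut{} \in \set{\wun, \weylaut{1}}$ bifurcation of admissible weights in \eqref{eq:admwt} --- and confirming that $\sadmuv^1$ is an honest fundamental domain. The disjointness $\admuv = \admuv^{\wun} \sqcup \admuv^{\weylaut{1}}$ recorded by Kac--Wakimoto ensures that this is so, and the uniqueness statement then drops out.
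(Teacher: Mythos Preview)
Your proposal is correct and follows essentially the same approach as the paper: the paper's argument is exactly the discussion in \cref{sec:MinModSHW} preceding the proposition, namely applying the algorithm of \cite{KawRel19}, computing the projection $\pi(\lambda) = \tfrac{1}{2}\lambda_1\sroot{1}$, using \eqref{eq:admdynk} to isolate $\sadmuv = \sadmuv^1 \sqcup \admuv^{\weylaut{1}}$, removing the $\weylaut{1}\cdot$-redundancy via the bijection $\sadmuv^1 \to \admuv^{\weylaut{1}}$, and excluding the two reducible cosets (noting their distinctness since $\lambda_1 \notin \ZZ$). Your framing of the ``main subtlety'' is a nice gloss, but there is no substantive difference in method.
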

\noindent It follows that $\minmoduv$ admits $\abs{\sadmuv^1} = \frac{1}{4} (\uu-1)(\uu-2) \vv (\vv-1)$ families of \srhwms.  These families correspond to the parabolic $\para$ that distinguishes $\sroot{1}$.  To obtain the analogous classification for $\dynkmod{\para}$, which distinguishes $\sroot{2}$, we need only apply $\dynkaut$ to the modules of \cref{prop:srhwclass}.

\subsection{Irreducible \rhw\ $\minmoduv$-modules} \label{sec:MinModRHW}

It is straightforward to use the algorithm of \cite{KawRel19} to generalise the preceding analysis to \rhwms.  Again, we shall show the existence of families of such $\minmoduv$-modules for all $\uu \ge 3$ and $\vv \ge 2$.

Relaxed \hwms\ have dense top spaces, so the appropriate Levi factor in this case is $\slthree$ itself and no projection is required.  We thus check, for each $\affine{\lambda} \in \admuv$, if the irreducible \hw\ $\slthree$-module $\firr{\lambda}$ is bounded (\cref{sec:LieIrr}).  If so, then it is a submodule of an irreducible semisimple coherent family $\coh{\lambda}$ of $\slthree$-modules.  When irreducible, the direct summand $\coh{\lambda}_{[\mu]}$, $[\mu] \in \dcsub / \rlat$, may be identified with the irreducible dense $\slthree$-module $\frel{\lambda}{[\mu]}$.  Moreover, the simple quotients of the affinisations $\arel{\lambda}{[\mu]}$ of these irreducibles exhaust the irreducible \rhw\ $\minmoduv$-modules \cite{KawRel19}.  Again, one still needs to determine which $\coh{\lambda}_{[\mu]}$ are irreducible.

Mathieu has classified bounded \hwms\ for all simple Lie algebras \cite{MatCla00}.  For $\slthree$, it turns out that $\firr{\lambda}$ is bounded if and only if at least one of the following three conditions are met:
\begin{subequations} \label{eq:hwbounded}
	\begin{alignat}{2}
		\lambda_1 &\in \NN \quad \text{and} \quad &\lambda_2 &\notin \NN, \label{eq:hwb1} \\
		\lambda_1 &\notin \NN \quad \text{and} \quad &\lambda_2 &\in \NN, \label{eq:hwb2} \\
		\lambda_1, \lambda_2 &\notin \NN \quad \text{and} \quad &\lambda_1 + \lambda_2 &\in \ZZ_{\ge-1}. \label{eq:hwb3}
	\end{alignat}
\end{subequations}
For $\affine{\lambda} \in \admuv^{\wun}$, \eqref{eq:adm1dynk} implies that $\firr{\lambda}$ is bounded if and only if $\lambda^{F,\wun}_1 = 0$ and $\lambda^{F,\wun}_2 \ne 0$, by \eqref{eq:hwb1}, or $\lambda^{F,\wun}_1 \ne 0$ and $\lambda^{F,\wun}_2 = 0$, by \eqref{eq:hwb2}.  The fact that \eqref{eq:hwb3} is never satisfied is easy to check: $\lambda^{F,\wun}_1, \lambda^{F,\wun}_2 \ne 0$ means $\lambda^{F,\wun}_0 \ne \vv-1$, hence
\begin{equation}
	\lambda_1 + \lambda_2 = \uu - 3 - \lambda^I_0 - \fracuv \brac*{\vv - 1 - \lambda^{F,\wun}_0} \notin \ZZ.
\end{equation}
For $\affine{\lambda} \in \admuv^{\weylaut{1}}$, a similar analysis using \eqref{eq:admwdynk} concludes that \eqref{eq:hwb1} and \eqref{eq:hwb2} are never satisfied, while \eqref{eq:hwb3} is satisfied if and only if $\lambda^{F,\weylaut{1}}_2 = 0$.

There are therefore three disjoint subsets of $\admuv$ which correspond to irreducible semisimple coherent families of $\slthree$-modules.  We let $\radmuv = \radmuv^1 \sqcup \radmuv^2 \sqcup \radmuv^3$, with
\begin{equation} \label{eq:defradmuv}
	\begin{gathered}
		\radmuv^1 = \set*{\affine{\lambda} \in \admuv^{\wun} \st \lambda^{F,\wun}_1 = 0\ \text{and}\ \lambda^{F,\wun}_2 \ne 0}, \quad
		\radmuv^2 = \set*{\affine{\lambda} \in \admuv^{\wun} \st \lambda^{F,\wun}_1 \ne 0\ \text{and}\ \lambda^{F,\wun}_2 = 0} \\
		\text{and} \quad
		\radmuv^3 = \set*{\affine{\lambda} \in \admuv^{\weylaut{1}} \st \lambda^{F,\weylaut{1}}_2 = 0},
	\end{gathered}
\end{equation}
noting again that $\radmwts{\uu}{1} = \varnothing$.  As in the previous \lcnamecref{sec:MinModSHW}, this description is redundant.  None of the $\slthree$-weights associated with these subsets are integral, hence the map from weights to coherent families is $3$ to $1$ \cite{MatCla00}.  Moreover, weights giving isomorphic coherent families are once again related by the shifted action of the Weyl group.  In particular, applying $\weylaut{1} \cdot$ to a weight $\affine{\lambda} \in \radmuv^2$ results in a weight in $\radmuv^3$ and applying $\weylaut{2} \cdot$ to the result moreover gives a weight in $\radmuv^1$.

We may therefore restrict $\affine{\lambda}$ to the subset $\radmuv^2$.\footnote{As we shall see, this arbitrary choice is convenient because the subsets of admissible weights classifying the relaxed, semirelaxed and \hw\ $\minmoduv$-modules then satisfy $\radmuv^2 \subseteq \sadmuv^1 \subset \admuv$.}  This gives us a classification of families of \rhw\ $\minmoduv$-modules.  It only remains to determine when these modules are irreducible; equivalently, to determine when $\coh{\lambda}_{[\mu]}$ is irreducible.  This latter problem was also solved quite generally by Mathieu \cite{MatCla00}.  His result states that $\coh{\lambda}_{[\mu]}$ is irreducible for all $[\mu] \in \brac*{\dcsub / \rlat} \setminus \sing{\lambda}$, where $\sing{\lambda}$ is the image of a union of certain codimension-$1$ affine subspaces in $\dcsub$.  For $\slthree$, $\sing{\lambda}$ is thus a union of curves and they may be identified as follows:
\begin{itemize}
	\item Determine the image in $\dcsub / \rlat$ of the shifted Weyl orbit of $\lambda$.  We take $\affine{\lambda} \in \radmuv^2$, so this image is easily checked to be $\set[\big]{[\lambda], [\weylaut{1} \cdot \lambda], [\weylaut{2} \weylaut{1} \cdot \lambda]}$.
	\item For each element $[\mu]$ in this image, determine the positive roots $\alpha$ such that $\pair{\mu}{\co{\alpha}} \notin \ZZ$.  For $[\lambda]$, this is $\sroot{1}$ and $\sroot{3}$; for $[\weylaut{1} \cdot \lambda]$, this is $\sroot{1}$ and $\sroot{2}$; while for $[\weylaut{2} \weylaut{1} \cdot \lambda]$, this is $\sroot{2}$ and $\sroot{3}$.
	\item Given all such $[\mu]$ and $\alpha$, $\sing{\lambda}$ is the union of all the $[\mu + \CC \alpha]$.  This appears to be the union of six curves, but in fact each curve appears twice in this description.  We end up with
	\begin{equation} \label{eq:defsing}
		\sing{\lambda} = [\lambda + \CC \sroot{1}] \cup [\weylaut{1} \cdot \lambda + \CC \sroot{2}] \cup [\lambda + \CC \sroot{3}].
	\end{equation}
\end{itemize}

This then completes the classification of irreducible \rhw\ $\minmoduv$-modules.
\begin{proposition} \label{prop:rhwclass}
	When $\kk$ is admissible, every irreducible \rhw\ $\minmoduv$-module is isomorphic to one, and only one, of the form $\arel{\lambda}{[\mu]}$, where $\affine{\lambda} \in \radmuv^2$ and $[\mu] \in \brac*{\dcsub / \rlat} \setminus \sing{\lambda}$.
\end{proposition}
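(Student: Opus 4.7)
The plan is to combine the algorithm of \cite{KawRel19} with Mathieu's classification results to assemble the statement from the discussion above. Applying the algorithm reduces the classification to two steps: first, identify those $\affine{\lambda} \in \admuv$ for which the irreducible $\slthree$-module $\firr{\lambda}$ is a submodule of an irreducible semisimple coherent family $\coh{\lambda}$ (equivalently, those $\firr{\lambda}$ that are bounded); second, for each such $\lambda$, identify the cosets $[\mu] \in \dcsub / \rlat$ for which the direct summand $\coh{\lambda}_{[\mu]}$ is irreducible. The irreducible \rhw\ $\minmoduv$-modules are then exactly the simple quotients $\arel{\lambda}{[\mu]}$ of the level-$\kk$ affinisations of $\frel{\lambda}{[\mu]} \cong \coh{\lambda}_{[\mu]}$, with no further identifications coming from the algorithm itself.

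For the first step, I would feed Mathieu's boundedness criterion \eqref{eq:hwbounded} into the explicit Dynkin-label formulae \eqref{eq:admdynk}. A direct case analysis on whether $\lambda^{F,\weylaut{}}_1$ and $\lambda^{F,\weylaut{}}_2$ vanish, together with the observation made just after \eqref{eq:hwbounded} that the diagonal condition \eqref{eq:hwb3} can only be met by $\weylaut{} = \weylaut{1}$ type admissible weights with $\lambda^{F,\weylaut{1}}_2 = 0$, shows that the bounded $\firr{\lambda}$ are parametrised by $\radmuv^1 \sqcup \radmuv^2 \sqcup \radmuv^3$. Redundancy in this parametrisation is then removed by invoking the fact that two highest weights label isomorphic coherent families if and only if they are related by the shifted Weyl action \cite{MatCla00}: a short computation shows that $\weylaut{1} \cdot$ and $\weylaut{2} \weylaut{1} \cdot$ send $\radmuv^2$ bijectively onto $\radmuv^3$ and $\radmuv^1$ respectively, so $\radmuv^2$ furnishes a complete set of representatives.

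For the second step, I would apply Mathieu's description of the singular set: $\coh{\lambda}_{[\mu]}$ is reducible exactly when $[\mu]$ lies on one of finitely many cosets $[\nu + \CC\alpha]$, where $\nu$ runs over the shifted Weyl orbit of $\lambda$ and $\alpha$ runs over the positive roots with $\pair{\nu}{\co{\alpha}} \notin \ZZ$. For $\affine{\lambda} \in \radmuv^2$, the relevant orbit representatives modulo $\rlat$ are $[\lambda]$, $[\weylaut{1} \cdot \lambda]$, and $[\weylaut{2} \weylaut{1} \cdot \lambda]$, and for each one exactly two positive roots contribute, giving six candidate curves a priori. The main obstacle, and the only slightly subtle point, is to verify that these six curves collapse pairwise into the three listed in \eqref{eq:defsing}. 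This amounts to checking that $[\weylaut{1} \cdot \lambda + \CC\sroot{1}] = [\lambda + \CC\sroot{1}]$, $[\weylaut{2} \weylaut{1} \cdot \lambda + \CC\sroot{2}] = [\weylaut{1} \cdot \lambda + \CC\sroot{2}]$, and $[\weylaut{2} \weylaut{1} \cdot \lambda + \CC\sroot{3}] = [\lambda + \CC\sroot{3}]$, each of which follows by writing the shifted reflections as translations by integer multiples of the relevant root. Combining the two steps then yields the claimed classification and the uniqueness statement.
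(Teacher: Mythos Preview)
Your proposal is correct and mirrors the paper's own argument essentially step for step: the paper likewise invokes the algorithm of \cite{KawRel19}, applies Mathieu's boundedness criterion \eqref{eq:hwbounded} to the explicit Dynkin labels \eqref{eq:admdynk} to obtain $\radmuv = \radmuv^1 \sqcup \radmuv^2 \sqcup \radmuv^3$, removes the threefold redundancy via the shifted Weyl action, and then computes $\sing{\lambda}$ by running through the shifted orbit $\set{[\lambda],[\weylaut{1}\cdot\lambda],[\weylaut{2}\weylaut{1}\cdot\lambda]}$ and collapsing the six resulting curves pairwise to the three in \eqref{eq:defsing}. One small caveat: your phrase ``translations by integer multiples of the relevant root'' is accurate for the first two coincidences but not literally for the third, where $\weylaut{2}\weylaut{1}\cdot\lambda - \lambda$ is not a multiple of $\sroot{3}$ alone; the identification $[\weylaut{2}\weylaut{1}\cdot\lambda + \CC\sroot{3}] = [\lambda + \CC\sroot{3}]$ instead uses that $\lambda_2 \in \ZZ$ (since $\affine{\lambda} \in \radmuv^2$) to absorb the discrepancy into $\rlat$.
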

\noindent This particular characterisation of these relaxed modules also appears in \cite[Thm.~1.1(a)]{KawRel20}.  It follows that $\minmoduv$ admits $\abs{\radmuv^2} = \frac{1}{4} (\uu-1)(\uu-2) (\vv-1)$ families of \rhwms.

\subsection{Classification summary} \label{sec:MinModClass}

We summarise the classification results thus far for convenience and note some relevant information about the twists of the irreducible modules being classified (\cref{sec:LieIrr}).  Recall the definitions of the sets $\admuv$, $\sadmuv$, $\radmuv$ and $\sing{\lambda}$ given above.  We illustrate the first three schematically in \cref{fig:admissibles}.%
\begin{figure}
	\begin{tikzpicture}[scale=1,>=latex]
		\draw[->] (0,0) -- (90:1) node[left] {$\lambda^{F,\weylaut{}}_2$};
		\draw[->] (0,0) -- (30:1) node[below right] {$\lambda^{F,\weylaut{}}_1$};
		\begin{scope}[shift={(-7,-0.5)}]
			\node at (1.75,2.5) {$\admuv$};
			\draw[fillcolour] (90:2) -- (0:0) -- (30:2) -- cycle;
			\node[blue] at (1,1.75) {$\admuv^{\wun}$};
			\node at (0.87,-0.3) {$\scriptstyle \weylaut{} = \wun$};
			\begin{scope}[shift={(2.0,0)}]
				\draw (90:2) -- (0:0) --
                                coordinate[pos=0.1](A1) (30:2) --
                                coordinate[pos=0.9](A2) (90:2); %cycle;
				\fill[fillcolour] (A2) -- (A1) -- (30:2) -- cycle;
				\node[blue] at (1,1.75) {$\admuv^{\weylaut{1}}$};
				\node at (0.87,-0.3) {$\scriptstyle \weylaut{} = \weylaut{1}$};
			\end{scope}
		\end{scope}
		\begin{scope}[shift={(-1.5,2)}]
			\node at (1.75,2.5) {$\sadmuv$};
			\draw (90:2) -- (0:0) -- coordinate[pos=0.1](B1)
                        (30:2) -- coordinate[pos=0.9](B2) (90:2); %cycle;
			\fill[fillcolour] (B2) -- (B1) -- (30:2) -- cycle;
			\node[blue] at (1,1.75) {$\sadmuv^1$};
			\node at (0.87,-0.3) {$\scriptstyle \weylaut{} = \wun$};
			\begin{scope}[shift={(2.0,0)}]
				\draw (90:2) -- (0:0) --
                                coordinate[pos=0.1](B3) (30:2) --
                                coordinate[pos=0.9](B4) (90:2); %cycle;
				\fill[fillcolour] (B4) -- (B3) -- (30:2) -- cycle;
				\node[blue] at (1,1.75) {$\admuv^{\weylaut{1}}$};
				\node at (0.87,-0.3) {$\scriptstyle \weylaut{} = \weylaut{1}$};
			\end{scope}
		\end{scope}
		\begin{scope}[shift={(-1.5,-3)}]
			\node at (1.75,2.5) {$\dynkmod{\sadmuv}$};
			\draw (90:2) -- coordinate[pos=0.9](B5) (0:0) --
                        (30:2) -- coordinate[pos=0.1](B6) (90:2); %cycle;
			\fill[fillcolour] (90:2) -- (B5) -- (B6) -- cycle;
			\node[blue] at (1,1.85) {$\dynkmod{\sadmuv^1}$};
			\node at (0.87,-0.3) {$\scriptstyle \weylaut{} = \wun$};
			\begin{scope}[shift={(2.0,0)}]
				\draw (90:2) -- (0:0) --
                                coordinate[pos=0.1](B3) (30:2) --
                                coordinate[pos=0.9](B4) (90:2); %cycle;
				\fill[fillcolour] (B4) -- (B3) -- (30:2) -- cycle;
				\node[blue] at (1,1.75) {$\admuv^{\weylaut{1}}$};
				\node at (0.87,-0.3) {$\scriptstyle \weylaut{} = \weylaut{1}$};
			\end{scope}
		\end{scope}
		\begin{scope}[shift={(4,-0.5)}]
			\node at (1.75,2.5) {$\radmuv$};
			\draw (90:2) -- coordinate[pos=0.9](C1) (0:0) -- coordinate[pos=0.1](C2) (30:2) -- cycle;
			\draw (90:2) -- node[pos=0.5,left,blue] {$\radmuv^1$} (C1) (0:0);               %DR the arXiv's TikZ incorrectly overrides the blue node colour
			\draw[drawcolour] (90:2) -- (C1) (0:0);                                         %DR with drawcolor in the line commented out below.
			%\draw[drawcolour] (90:2) -- node[pos=0.5,left,blue] {$\radmuv^1$} (C1) (0:0);
			\draw (C2) -- node[pos=0.55,below,blue] {$\quad\radmuv^2$} (30:2) (0:0);
			\draw[drawcolour] (C2) -- (30:2) (0:0);
			%\draw[drawcolour] (C2) -- node[pos=0.65,below,blue] {$\radmuv^2$} (30:2) (0:0);
			\node at (0.87,-0.3) {$\scriptstyle \weylaut{} = \wun$};
			\begin{scope}[shift={(2.0,0)}]
				\draw (90:2) -- (0:0) -- coordinate[pos=0.1](C3) (30:2) -- cycle;
				\draw (C3) -- node[pos=0.55,below,blue] {$\quad\radmuv^3$} (30:2) (0:0);
				\draw[drawcolour] (C3) -- (30:2) (0:0);
				%\draw[drawcolour] (C3) -- node[pos=0.65,below,blue] {$\radmuv^3$} (30:2) (0:0);
				\node at (0.87,-0.3) {$\scriptstyle \weylaut{} = \weylaut{1}$};
			\end{scope}
		\end{scope}
	\end{tikzpicture}
  \caption{The fractional parts $\affine{\lambda}^{F,\weylaut{}} \in \apwlat{\vv-1}$ of the admissible weights, projected onto the real slice of $\dcsub$, that define \hw\ $\minmoduv$-modules (left), \srhw\ $\minmoduv$-modules for the parabolic $\para$ (centre top) and $\dynkmod{\para}$ (centre bottom), and \rhw\ $\minmoduv$-modules (right).} \label{fig:admissibles}
\end{figure}
\begin{theorem} \label{thm:class}
	For $\kk$ admissible, every irreducible positive-energy weight $\minmoduv$-module, with \fdim\ weight spaces, is isomorphic to a $\weyl$-twist of one from the following list of mutually inequivalent modules:
	\begin{itemize}
		\item The \hwms\ $\airr{\lambda}$, with $\affine{\lambda} \in \admuv$.
		\item The \srhwms\ $\asemi{\lambda}{[\mu]}$, with $\affine{\lambda} \in \sadmuv^1$ and $[\mu] \in \brac*{\lambda + \CC \sroot{1}} / \ZZ \sroot{1}$ satisfying $[\mu] \ne [\lambda]$ and $[\mu] \ne [\weylaut{1} \cdot \lambda]$.
		\item The $\dynkaut$-twists of the \srhwms\ $\asemi{\lambda}{[\mu]}$ classified above.
		\item The \rhwms\ $\arel{\lambda}{[\mu]}$, with $\affine{\lambda} \in \radmuv^2$ and $[\mu] \in \brac*{\dcsub / \rlat} \setminus \sing{\lambda}$.
	\end{itemize}
\end{theorem}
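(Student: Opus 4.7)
The strategy is to assemble the previously-established classifications of \hw{} (\cref{thm:hwclass}), \srhw{} (\cref{prop:srhwclass}), and \rhw{} (\cref{prop:rhwclass}) $\minmoduv$-modules by matching them to the four types of irreducible weight $\slthree$-module that can appear as the top space of an irreducible positive-energy $\minmoduv$-module.

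Let $\affine{\Mod{N}}$ be such a module.  First I would argue that its top space $\Mod{T}$ (the subspace of minimal conformal weight) is a nonzero irreducible weight $\slthree$-module: it is preserved by the action of $\slthree = \slthree \otimes t^0$ since those modes commute with $L_0$, and it must be irreducible since any proper $\slthree$-submodule of $\Mod{T}$ would generate a proper $\minmoduv$-submodule under the action of $\aslneg$.  Moreover, the inclusion $\Mod{T} \ira \affine{\Mod{N}}$ extends by universality to a surjection from the level-$\kk$ affinisation of $\Mod{T}$ onto $\affine{\Mod{N}}$, so that $\affine{\Mod{N}}$ is the simple quotient of that affinisation; in particular, the isomorphism class of $\affine{\Mod{N}}$ is determined by that of $\Mod{T}$.

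Next, the Ferreira--Lemire--Mathieu classification recalled in \cref{sec:LieIrr} presents $\Mod{T}$ as the irreducible quotient of a parabolic induction from an irreducible dense module over the Levi factor of some parabolic $\para' \subseteq \slthree$.  Since every parabolic is $\weyl$-conjugate to one containing $\borel$, after twisting $\affine{\Mod{N}}$ by an appropriate element of $\weyl$ we may assume $\para' \in \set{\borel, \para, \dynkmod{\para}, \slthree}$.  In these four respective cases $\Mod{T}$ is of the form $\firr{\mu}$, $\fsemi{\lambda}{[\mu]}$, $\dynkmod{\fsemi{\lambda}{[\mu]}}$, or $\frel{\lambda}{[\mu]}$, and so $\affine{\Mod{N}}$ is respectively $\airr{\mu}$, $\asemi{\lambda}{[\mu]}$, $\dynkmod{\asemi{\lambda}{[\mu]}}$, or $\arel{\lambda}{[\mu]}$.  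Applying, in turn, \cref{thm:hwclass}, \cref{prop:srhwclass}, the $\dynkaut$-twist of \cref{prop:srhwclass} (legitimate because $\dynkaut$ is a \voa{} automorphism of $\minmoduv$), and \cref{prop:rhwclass} then yields exactly the parameter constraints listed in the theorem.

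For the mutual inequivalence, an isomorphism of affine modules on the list would restrict to an isomorphism of top spaces, so modules of the four types cannot be interchanged: they are distinguished by the distinct shapes of their $\slthree$-weight supports (a negative cone, two distinct half-planes, and a full $\rlat$-coset; cf.\ \cref{fig:sl3mods}), and within each family the parameters are uniquely determined by the relevant classification.  The main point requiring care is that no element of $\weyl$ sends the half-plane weight support associated with $\para$ onto the one associated with $\dynkmod{\para}$, so that the two semirelaxed families are genuinely distinct (and not merely Weyl-related); this is a short direct check using the induced action of $\weyl$ on $\rlat$ and the fact that $\para$ and $\dynkmod{\para}$ are not $\weyl$-conjugate.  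Beyond this, the theorem is essentially a packaging of the earlier classification results.
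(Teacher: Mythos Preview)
Your proposal is correct and matches the paper's approach: the theorem is presented there as a summary of the \hw{}, \srhw{} and \rhw{} classifications (\cref{thm:hwclass}, \cref{prop:srhwclass}, \cref{prop:rhwclass}), assembled via the top-space/Zhu correspondence and the Fernando--Mathieu classification of irreducible weight $\slthree$-modules recalled in \cref{sec:LieIrr}. The paper does not spell out a separate proof because, as you say at the end, the theorem is essentially a packaging of those earlier results; your outline of why the top space is irreducible and why the four families are mutually inequivalent fills in exactly the details the paper leaves implicit.
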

\noindent The classification of admissible-level positive-energy weight
$\minmoduv$-modules was originally reported in \cite{AraWei16}, where the
results are described in the language of generalised Gelfand--Tsetlin modules.

Note that the top space of $\asemi{\lambda}{[\mu]}$, $\affine{\lambda} \in \sadmuv^1$, is a semidense $\slthree$-module whose weight support is $\mu + \ZZ \sroot{1} - \NN \sroot{2}$.  The weights in $\mu + \ZZ \sroot{1}$ all have multiplicity $1$, as this is the multiplicity of the weights of an irreducible coherent family of $\sltwo$-modules.  The top space multiplicities are moreover uniformly bounded when $\affine{\lambda} \in \radmuv^2$ and are unbounded when $\affine{\lambda} \in \sadmuv^1 \setminus \radmuv^2$.

On the other hand, the top space of $\arel{\lambda}{[\mu]}$, $\affine{\lambda} \in \radmuv^2$, is a dense $\slthree$-module whose weight support is $\mu + \rlat$.  The (constant) multiplicity of the weights coincides with the maximal multiplicity of the weights of the irreducible \hw\ $\slthree$-module $\firr{\lambda}$.  But, the latter is easily determined as $\lambda$ is dominant regular (this is true for all admissible weights).  Indeed, Kazhdan--Lusztig theory realises $\firr{\lambda}$ as the quotient of the corresponding Verma module by $\firr{\weylaut{2} \cdot \lambda}$.  A straightforward basis computation now shows that the desired multiplicity is $\lambda^I_2 + 1$.

One of the deep insights behind Arakawa's proof of \cref{thm:hwclass} is that the set of highest weights of the admissible-level \hw\ $\minmoduv$-modules is naturally partitioned by the three nilpotent orbits of $\slthree$:
\begin{equation}
	\nilorb{0} = \SLG{SU}{3} \cdot
	\begin{pmatrix}
		0&0&0\\
		0&0&0\\
		0&0&0
	\end{pmatrix}
	, \qquad \nilorb{min.} = \SLG{SU}{3} \cdot
	\begin{pmatrix}
		0&1&0\\
		0&0&0\\
		0&0&0
	\end{pmatrix}
	, \qquad \nilorb{pr.} = \SLG{SU}{3} \cdot
	\begin{pmatrix}
		0&1&0\\
		0&0&1\\
		0&0&0
	\end{pmatrix}
	.
\end{equation}
More precisely, for each $\affine{\lambda} \in \admuv$, the associated variety of the annihilator (in $\envalg{\slthree}$) of $\firr{\lambda}$ is the closure of some nilpotent orbit \cite{JosAss85}.  We compare this partition with our classification results:
\begin{itemize}
	\item If $\lambda$ parametrises families of \rhw\ $\minmoduv$-modules, hence $\affine{\lambda} \in \radmuv$, then $\lambda$ belongs to the minimal nilpotent orbit partition: $\lambda \in \nilorb{min.}$.  (This is a restatement of a general result of Mathieu \cite{MatCla00}.)
	\item If $\lambda$ parametrises families of \srhw\ $\minmoduv$-modules (but not relaxed families), hence $\affine{\lambda} \in \brac[\big]{\sadmuv \cup \dynkmod{\sadmuv}} \setminus \radmuv$, then $\lambda$ belongs to the principal nilpotent orbit partition: $\lambda \in \nilorb{pr.}$.
	\item If $\lambda$ only labels a single \hw\ $\minmoduv$-module (and does not parametrise families of (semi)relaxed modules), hence $\affine{\lambda} \in \admuv \setminus \brac[\big]{\sadmuv \cup \dynkmod{\sadmuv}}$, then $\lambda$ belongs to the zero nilpotent orbit partition: $\lambda \in \nilorb{0}$.
\end{itemize}
The question of whether $\lambda$ naturally defines a single \hwm, a family of semirelaxed modules or a family of relaxed modules is thus answered by nilpotent orbits.  It would be very interesting to see how this generalises beyond $\slthree$ and $\minmoduv$.

Returning to \cref{thm:class}, this classification list consists of mutually inequivalent irreducible modules.  However, their $\weyl$-twists are occasionally isomorphic to another module in the list.
\begin{proposition} \label{prop:Wtwists}
	The isomorphisms among the $\weyl$-twists of the list of irreducible $\minmoduv$-modules in \cref{thm:class} are as follows:
	\begin{itemize}
		\item If $\lambda$ is integral, so $\affine{\lambda} \in \admuv^{\wun}$ with $\affine{\lambda}^{F,\wun} = 0$, then we have $\weylmod{}{\airr{\lambda}} \cong \airr{\lambda}$ for all $\weylaut{} \in \weyl$.
		\item If $\lambda_1 \in \NN$ and $\lambda_2 \notin \NN$ ($\lambda_1 \notin \NN$ and $\lambda_2 \in \NN$), so $\affine{\lambda} \in \admuv^{\wun}$ with $\lambda^{F,\wun}_1 = 0$ and $\lambda^{F,\wun}_2 \ne 0$ ($\lambda^{F,\wun}_1 \ne 0$ and $\lambda^{F,\wun}_2 = 0$), then $\weylmod{1}{\airr{\lambda}} \cong \airr{\lambda}$ ($\weylmod{2}{\airr{\lambda}} \cong \airr{\lambda}$) while twisting by representatives of the other cosets of $\weyl / \ideal{\weylaut{1}}$ ($\weyl / \ideal{\weylaut{2}}$) results in modules whose top spaces are \hw\ with respect to a different Borel than $\borel$.
		\item If $\lambda_1, \lambda_2 \notin \NN$, so $\affine{\lambda} \in \admuv^{\weylaut{1}}$ or $\affine{\lambda} \in \admuv^{\wun}$ with $\lambda^{F,\wun}_1, \lambda^{F,\wun}_2 \ne 0$, then $\weylmod{}{\airr{\lambda}}$ is \hw\ with respect to a different Borel than $\borel$, for each $\weylaut{} \ne \wun$.
		\item We have $\weylmod{1}{\asemi{\lambda}{[\mu]}} \cong \asemi{\lambda}{[\weylmod{1}{\mu}]}$, while twisting by representatives of other cosets of $\weyl / \ideal{\weylaut{1}}$ results in modules whose top spaces are \srhw\ with respect to a different parabolic than $\para$.  These are also distinct from the $\weyl$-twists of $\dynkmod{\asemi{\lambda}{[\mu]}}$ for which we note that $\weylaut{2} \dynkmod{\asemi{\lambda}{[\mu]}} \cong \dynkaut \weylmod{1}{\asemi{\lambda}{[\mu]}} \cong \dynkmod{\asemi{\lambda}{[\weylmod{1}{\mu}]}}$.
		\item We have $\weylmod{}{\arel{\lambda}{[\mu]}} \cong \arel{\lambda}{[\weylmod{}{\mu}]}$ for all $\weylaut{} \in \weyl$.
	\end{itemize}
\end{proposition}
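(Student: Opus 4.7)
The plan is to reduce every claim to a statement about the action of $\weyl$ on top spaces. Since an element of $\weyl$, lifted to an $\aslthree$-automorphism as in Section~\ref{sec:AffAut}, acts trivially on $\CC[t,t^{-1}]$, on $K$ and on $L_0$, the isomorphism class of $\weylmod{}{\affine{\Mod{M}}}$ is determined by the $\slthree$-automorphism $\weylaut{}$ acting on the top space of $\affine{\Mod{M}}$. Hence I would analyse the $\weyl$-twists of the irreducible $\slthree$-modules $\firr{\lambda}$, $\fsemi{\lambda}{[\mu]}$ and $\frel{\lambda}{[\mu]}$ using the tools recalled in Section~\ref{sec:LieIrr}, and then lift back to the affine level.

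For the highest-weight cases, the governing principle is that $\weylmod{i}{\firr{\mu}} \cong \firr{\mu}$ if and only if $\mu_i \in \NN$; otherwise, the twist is a highest-weight module with respect to $\weylmod{i}{\borel} \ne \borel$. When $\lambda$ is integral, $\firr{\lambda}$ is finite-dimensional and so $\weyl$-invariant by classical theory, handling the first bullet. When exactly one Dynkin label lies in $\NN$, the above principle immediately identifies the stabilizer as $\ideal{\weylaut{1}}$ or $\ideal{\weylaut{2}}$, and representatives of the other cosets send the standard Borel to a different Borel. When neither Dynkin label is integral, the stabilizer of $\firr{\lambda}$ in $\weyl$ is trivial by the same dichotomy, and the third bullet follows.

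For the semirelaxed case, the key observation, already noted in Section~\ref{sec:LieIrr}, is that $\weylmod{1}{\para} = \para$, so $\weylaut{1}$ preserves the parabolic type and simply permutes weight data by $\weylmod{1}$. Together with $\weylmod{1}{\sroot{1}} = -\sroot{1}$, this gives $\weylmod{1}{\asemi{\lambda}{[\mu]}} \cong \asemi{\lambda}{[\weylmod{1}{\mu}]}$, once one checks that the target coset $[\weylmod{1}{\mu}]$ still lies in $(\lambda + \CC\sroot{1})/\ZZ\sroot{1}$ and avoids the degenerate cosets $[\lambda]$ and $[\weylaut{1} \cdot \lambda]$ forbidden by Proposition~\ref{prop:srhwclass}. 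The other cosets of $\weyl/\ideal{\weylaut{1}}$ send $\para$ to a different parabolic, producing a semidense top space for that parabolic rather than for $\para$. For the $\dynkaut$-twist identity I would invoke the relation $\dynkaut \weylaut{1} = \weylaut{2} \dynkaut$ from Section~\ref{sec:LieAut}, which immediately gives $\weylmod{2}{\dynkmod{\asemi{\lambda}{[\mu]}}} \cong \dynkmod{\weylmod{1}{\asemi{\lambda}{[\mu]}}} \cong \dynkmod{\asemi{\lambda}{[\weylmod{1}{\mu}]}}$. For the relaxed case, I would invoke Mathieu's result recalled in Section~\ref{sec:LieIrr}: every $\weyl$-twist of an irreducible semisimple coherent family of $\slthree$-modules is isomorphic to the family itself, so twisting by $\weylaut{}$ merely reindexes direct summands by $[\mu] \mapsto [\weylmod{}{\mu}]$, yielding $\weylmod{}{\arel{\lambda}{[\mu]}} \cong \arel{\lambda}{[\weylmod{}{\mu}]}$.

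The main obstacle I anticipate is the bookkeeping in the semirelaxed case: confirming that the non-identity cosets of $\weyl/\ideal{\weylaut{1}}$ really produce modules \emph{outside} the list of Theorem~\ref{thm:class}, and distinguishing them from the $\dynkaut$-twists. This requires tracing how $\weylaut{2}$ and $\weylaut{3} = \weylaut{2}\weylaut{1}\weylaut{2}$ move the parabolic $\para$ relative to $\dynkmod{\para}$ and checking that the resulting top spaces, while semirelaxed for some conjugate parabolic, are not of the form $\asemi{\lambda'}{[\mu']}$ nor $\dynkmod{\asemi{\lambda'}{[\mu']}}$ for any weights already classified. The rest of the argument is essentially a translation between the $\slthree$-level classification of Section~\ref{sec:LieIrr} and the affine refinement via the hat construction of Section~\ref{sec:AffIrr}.
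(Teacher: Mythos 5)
Your proposal is correct and matches the paper's justification, which is given only as a remark: the highest-weight and semirelaxed statements are the well-known facts recorded in Sections~\ref{sec:LieIrr} and \ref{sec:AffIrr} (in particular $\weylmod{i}{\firr{\mu}} \cong \firr{\mu}$ iff $\mu_i \in \NN$ and $\weylmod{1}{\para} = \para$), while the relaxed statement follows from Mathieu's result that coherent families are invariant under the Weyl group, all transported to the affine level via the top spaces. The bookkeeping you flag for the non-identity cosets in the semirelaxed case (comparing weight supports of top spaces for the conjugated parabolics) is exactly the level of detail the paper also leaves implicit.
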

\noindent We remark that the isomorphisms of \cref{prop:Wtwists} are either well known or follow easily from the general fact \cite{MatCla00} that a coherent family is invariant under the action of the Weyl group.
\begin{proposition} \label{prop:dtwists}
	\leavevmode
	\begin{itemize}
		\item The $\dynkaut$-twist of an irreducible \hwm \ is again irreducible and \hw: $\dynkmod{\airr{\lambda}} \cong \airr{\dynkmod{\lambda}}$.
		\item The $\dynkaut$-twist of a \rhw\ $\minmoduv$-module is likewise given by
		\begin{equation} \label{eq:dynkrel}
			\dynkmod{\arel{\lambda}{[\mu]}}
			\cong \arel{\weylaut{1} \weylaut{2} \dynkaut \cdot \lambda}{[\dynkmod{\mu}]}
			\cong \arel{\conjaut \weylaut{2} \cdot \lambda}{[\dynkmod{\mu}]} \qquad \text{($\affine{\lambda} \in \radmuv^2$)}.
		\end{equation}
	\end{itemize}
\end{proposition}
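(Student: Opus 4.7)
The plan splits along the two bullets of the proposition.

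\textbf{First bullet.} Since $\dynkaut$ permutes the simple root vectors $e^1 \leftrightarrow e^2$ (and fixes $e^3$ up to scalar), it preserves the Borel $\borel$. When lifted to $\aslthree$ in the standard way, it acts trivially on $K$ and the loop variable, so it preserves the affine Borel. Consequently, $\dynkaut$ sends \hwv s to \hwv s: the image under $\dynkaut$ of the generating \hwv\ of $\airr{\lambda}$ is a \hwv\ whose weight is $\dynkmod{\lambda}$ by \eqref{eq:daction}. Since $\airr{\lambda}$ is irreducible, so is $\dynkmod{\airr{\lambda}}$, and uniqueness of the irreducible \hw\ quotient gives $\dynkmod{\airr{\lambda}} \cong \airr{\dynkmod{\lambda}}$. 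This was already remarked at the \fdim\ level in \cref{sec:LieIrr} and carries over to the affine level because affinisation is functorial with respect to root-system automorphisms.

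\textbf{Second bullet.} The parabolic $\slthree$ is trivially $\dynkaut$-invariant, so $\dynkmod{\arel{\lambda}{[\mu]}}$ is again an irreducible \rhw\ $\minmoduv$-module whose top space is dense with weight support $[\dynkmod{\mu}]$. Its associated coherent family is $\dynkmod{\coh{\lambda}} = \coh{\dynkmod{\lambda}}$ (noting $\dynkmod{\wvec} = \wvec$, so the shifted and unshifted $\dynkaut$-actions on $\dcsub$ coincide). What remains is to identify a representative of this family lying in $\radmuv^2$, as stipulated by \cref{prop:rhwclass,thm:class}. The Dynkin symmetry swaps the fractional Dynkin labels $\lambda^{F,\wun}_1$ and $\lambda^{F,\wun}_2$, hence by \eqref{eq:defradmuv} it maps $\radmuv^2 \to \radmuv^1$. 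The text established the cycle $\weylaut{1} \cdot \colon \radmuv^2 \to \radmuv^3$ and $\weylaut{2} \cdot \colon \radmuv^3 \to \radmuv^1$, so its inverse bijection $\radmuv^1 \to \radmuv^2$ is $\weylaut{1} \weylaut{2} \cdot$. Applying this to $\dynkaut \cdot \lambda \in \radmuv^1$ yields $\weylaut{1} \weylaut{2} \dynkaut \cdot \lambda \in \radmuv^2$, giving the first identification.

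The second identification is a group-theoretic rewriting in $\grp{D}_6$: using $\conjaut = \dynkaut \weylaut{3}$, $\weylaut{3} = \weylaut{2} \weylaut{1} \weylaut{2}$, $\weylaut{2}^2 = \wun$ and $\dynkaut \weylaut{2} = \weylaut{1} \dynkaut$ (from $\dynkaut \weylaut{i} \dynkaut^{-1} = \weylaut{\dynkmod{i}}$), one finds
\begin{equation*}
	\conjaut \weylaut{2}
	= \dynkaut \weylaut{2} \weylaut{1} \weylaut{2} \weylaut{2}
	= \dynkaut \weylaut{2} \weylaut{1}
	= \weylaut{1} \dynkaut \weylaut{1}
	= \weylaut{1} \weylaut{2} \dynkaut,
\end{equation*}
which transfers directly to the shifted action on $\dcsub$.

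\textbf{Main obstacle.} The principal subtlety is the bookkeeping across the three-fold partition $\radmuv = \radmuv^1 \sqcup \radmuv^2 \sqcup \radmuv^3$: confirming that $\dynkmod{\radmuv^2} = \radmuv^1$ (not $\radmuv^3$) and that the Weyl word $\weylaut{1} \weylaut{2}$ (rather than $\weylaut{2} \weylaut{1}$) is the correct inverse of the cycle. A minor consistency check also needs to be recorded, namely that the constraint $[\mu] \notin \sing{\lambda}$ is preserved under the twist; this follows from $\dynkmod{\sing{\lambda}} = \sing{\dynkaut \cdot \lambda}$ together with the invariance of $\sing{\cdot}$ under the shifted Weyl action on its argument.
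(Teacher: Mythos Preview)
Your proof is correct and follows essentially the same approach as the paper's. The paper's proof is terser on the first bullet (simply calling it well known) and, for the second bullet, additionally records the explicit $\dynkaut$-action on the Dynkin labels of both $\admuv^{\wun}$ and $\admuv^{\weylaut{1}}$ weights; this extra information shows that $\radmuv^3$ is $\dynkaut$-stable, but only the exchange $\radmuv^1 \leftrightarrow \radmuv^2$ is needed here, which you establish just as the paper does.
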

\begin{proof}
	The identification of \hw\ $\dynkaut$-twists is well known.  The relaxed identification follows by noting that while $\dynkaut$ acts on the integral and fractional parts of $\affine{\lambda} \in \admuv^{\wun}$ by swapping the first and second Dynkin labels, the action on $\admuv^{\weylaut{1}}$ is somewhat different:
	\begin{equation}
		\begin{aligned}
			\affine{\lambda} \in \admuv^{\wun} &: & \dynkmod{\affine{\lambda}}^I &= (\lambda^I_0, \lambda^I_2, \lambda^I_1), & \dynkmod{\affine{\lambda}}^{F,\wun} &= (\lambda^{F,\wun}_0, \lambda^{F,\wun}_2, \lambda^{F,\wun}_1), \\
			\affine{\lambda} \in \admuv^{\weylaut{1}} &: & \dynkmod{\affine{\lambda}}^I &= (\lambda^I_1, \lambda^I_0, \lambda^I_2), & \dynkmod{\affine{\lambda}}^{F,\weylaut{1}} &= (\lambda^{F,\weylaut{1}}_1 - 1, \lambda^{F,\weylaut{1}}_0 + 1, \lambda^{F,\weylaut{1}}_2).
		\end{aligned}
	\end{equation}
	These formulae make it clear that $\admuv^{\wun}$, $\admuv^{\weylaut{1}}$ and $\radmuv^3$ are all preserved by $\dynkaut$ while $\radmuv^1$ and $\radmuv^2$ are exchanged.  The latter is relevant to the identification of the $\dynkaut$-twist of $\arel{\lambda}{[\mu]}$ as $\lambda \in \radmuv^2$ implies that $\dynkmod{\lambda} \in \radmuv^1$.  The weight in $\radmuv^2$ specifying the same coherent family of $\slthree$-modules as $\dynkmod{\lambda}$ is then $\weylaut{1} \weylaut{2} \cdot \dynkmod{\lambda}$, as per the discussion after \eqref{eq:defradmuv}.
\end{proof}

Finally, we study the action of spectral flow on the irreducible positive-energy weight $\minmoduv$-modules.  For this, we first note the following identifications, easily checked using \eqref{eq:admdynk}:
\begin{itemize}
	\item The level-$\kk$ affine weight corresponding to $\kk \fwt{1} - \weylmod{2}{\lambda}$ has Dynkin labels $(\lambda_1, \lambda_0, \lambda_2)$.  This permutation of Dynkin labels preserves $\admuv^{\wun}$ and $\admuv^{\weylaut{1}}$.
	\item The level-$\kk$ affine weight corresponding to $\kk \fwt{2} - \weylmod{1}{\lambda}$ has Dynkin labels $(\lambda_2, \lambda_1, \lambda_0)$.  This permutation of Dynkin labels preserves $\admuv^{\wun}$, $\admuv^{\weylaut{1}}$ and $\sadmuv^1$.
\end{itemize}
The spectral flow action now follows from \cref{prop:+ESF}.  Here and below, we shall somewhat abuse notation by equating a label $\mu \in \csub$ with the Dynkin indices $(\mu_0, \mu_1, \mu_2)$ of $\affine{\mu}$ (to emphasise the symmetry).
\begin{proposition} \label{prop:sftwists}
	\leavevmode
	\begin{subequations}
	\begin{itemize}
		\item For every $\affine{\mu} \in \admuv$ with $\mu \in \pwlat$, hence $\affine{\mu} \in \admuv^{\wun}$ and $\mu^{F,\wun}_1 = \mu^{F,\wun}_2 = 0$, we have
		\begin{equation}
			\sfmod{\fcwt{1}}{\airr{\mu}} \cong \airr{(\mu_2,\mu_0,\mu_1)} \quad \text{and} \quad
			\sfmod{\fcwt{2}}{\airr{\mu}} \cong \airr{(\mu_1,\mu_2,\mu_0)}.
		\end{equation}
		For all other $\affine{\mu} \in \admuv$, these spectral flows are not positive-energy.
		\item For every $\affine{\mu} \in \admuv$ with $\mu_1 \in \NN$ or $\mu_2 \in \NN$, hence $\affine{\mu} \in \admuv^{\wun}$ and $\mu^{F,\wun}_1 = 0$ or $\mu^{F,\wun}_2 = 0$, we have
		\begin{equation}
			\sfmod{\fcwt{1}-\fcwt{2}}{\airr{\mu}} \cong \weylmod{2}{\airr{(\mu_1,\mu_2,\mu_0})} \quad \text{or} \quad
			\sfmod{\fcwt{2}-\fcwt{1}}{\airr{\mu}} \cong \weylmod{1}{\airr{(\mu_2,\mu_0,\mu_1)}},
		\end{equation}
		respectively.  For all other $\affine{\mu} \in \admuv$, these spectral flows are not positive-energy.
		\item For every $\affine{\mu} \in \admuv$, we have
		\begin{equation}
			\sfmod{-\fcwt{1}}{\airr{\mu}} \cong \weylaut{1} \weylmod{2}{\airr{(\mu_1,\mu_2,\mu_0)}} \quad \text{and} \quad
			\sfmod{-\fcwt{2}}{\airr{\mu}} \cong \weylaut{2} \weylmod{1}{\airr{(\mu_2,\mu_0,\mu_1)}}.
		\end{equation}
		\item For every $\affine{\lambda} \in \sadmuv^1$ and $[\mu] \in (\lambda + \CC \sroot{1}) / \ZZ \sroot{1}$, we have
		\begin{equation}
			\sfmod{-\fcwt{2}}{\asemi{\lambda}{[\mu]}} \cong \conjaut \weylmod{1}{\asemi{(\lambda_2,\lambda_1,\lambda_0)}{[(\mu_2,\mu_1,\mu_0)]}} \cong \conjmod{\asemi{(\lambda_2,\lambda_1,\lambda_0)}{[\kk \fwt{2} - \mu]}}.
		\end{equation}
		All other (nontrivial) spectral flows are not positive-energy.
	\end{itemize}
	\end{subequations}
\end{proposition}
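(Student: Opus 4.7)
The plan is to reduce every identification to Proposition~\ref{prop:+ESF} combined with explicit computations of affine Dynkin labels. The key ingredient is the pair of observations highlighted before the \lcnamecref{prop:sftwists}: by pairing with $h^1$ and $h^2$ and using $\pair{\sroot{i}}{h^j} = A^{ij}$ together with $\lambda_0 = \kk - \lambda_1 - \lambda_2$, one checks that $\kk \fwt{1} - \weylmod{2}{\lambda}$ has affine Dynkin labels $(\lambda_1, \lambda_0, \lambda_2)$, while $\kk \fwt{2} - \weylmod{1}{\lambda}$ has $(\lambda_2, \lambda_1, \lambda_0)$. Analogous computations yield that $\kk \fwt{1} + \weylaut{1} \weylmod{2}{\lambda}$ and $\kk \fwt{2} + \weylaut{2} \weylmod{1}{\lambda}$ have affine Dynkin labels $(\lambda_2, \lambda_0, \lambda_1)$ and $(\lambda_1, \lambda_2, \lambda_0)$, respectively.

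With these permutations in hand, I would deduce the first three bullet points by substitution into parts (i)--(iii) of Proposition~\ref{prop:+ESF}. For example, the identification $\sfmod{\fcwt{1}}{\airr{\mu}} \cong \airr{\kk \fwt{1} + \weylaut{1} \weylmod{2}{\mu}}$ becomes $\airr{(\mu_2, \mu_0, \mu_1)}$, and the hypothesis $\mu \in \pwlat$ is recognised via \cref{prop:fdimtop} as $\affine{\mu} \in \admuv^{\wun}$ with $\mu^{F,\wun}_1 = \mu^{F,\wun}_2 = 0$. The remaining identifications follow in the same fashion, invoking \cref{prop:dtwists,prop:Wtwists} to convert the $\dynkaut$- and $\weylaut{i}$-twists appearing in Proposition~\ref{prop:+ESF} into further label permutations (using that $\dynkaut$ exchanges $\lambda_1$ with $\lambda_2$, leaving $\lambda_0$ fixed). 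That the resulting modules remain $\minmoduv$-modules is guaranteed by the observations preceding the \lcnamecref{prop:sftwists}: the pertinent permutations preserve $\admuv^{\wun}$, $\admuv^{\weylaut{1}}$, and (in the semirelaxed case) $\sadmuv^1$. The negative statements, that no other spectral flow of an $\airr{\mu}$ is positive-energy, are exactly the positive-energy criteria in Proposition~\ref{prop:+ESF}(i)--(iii).

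The semirelaxed bullet is handled similarly via Proposition~\ref{prop:+ESF}(iv): substituting the label permutation $\kk \fwt{2} - \weylmod{1}{\lambda} \leftrightarrow (\lambda_2, \lambda_1, \lambda_0)$ gives the first form $\sfmod{-\fcwt{2}}{\asemi{\lambda}{[\mu]}} \cong \conjaut \weylmod{1}{\asemi{(\lambda_2, \lambda_1, \lambda_0)}{[(\mu_2, \mu_1, \mu_0)]}}$. For the equivalent form, I would apply the semirelaxed case of \cref{prop:Wtwists} to pass $\weylaut{1}$ through the module symbol, and then use $\weylmod{1}{\fwt{2}} = \fwt{2}$ to simplify $\weylmod{1}(\kk \fwt{2} - \weylmod{1}{\mu}) = \kk \fwt{2} - \mu$, yielding $\conjmod{\asemi{(\lambda_2, \lambda_1, \lambda_0)}{[\kk \fwt{2} - \mu]}}$. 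Failure of positive-energy for all other nontrivial spectral flows of a \srhwm\ is given directly by Proposition~\ref{prop:+ESF}(iv). The argument is really bookkeeping; the one subtle point is tracking the composition of spectral flow, diagram, and Weyl twists on the level of affine Dynkin labels, but routing every identification through the two permutation identities above keeps the computation tractable.
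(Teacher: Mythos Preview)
Your proposal is correct and follows essentially the same approach as the paper: the paper does not give a detailed proof of this \lcnamecref{prop:sftwists}, but simply records the two Dynkin-label permutation identities you highlight and then states that ``the spectral flow action now follows from \cref{prop:+ESF}''. Your write-up is a faithful elaboration of exactly this reduction, with the additional bookkeeping for the $\dynkaut$- and $\weylaut{i}$-twists made explicit.
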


By iterating these identifications, we can visualise the spectral flow orbits on the set of isomorphism classes of irreducible \hw\ $\minmoduv$-modules.  In \cref{fig:sforbits}, we illustrate the parts of these orbits that correspond to positive-energy modules.  This depends only on the number of nonnegative integer Dynkin labels.
\begin{figure}
	\begin{tikzpicture}[scale=1,<->,>=latex]
		\draw[->] (0:0) -- (30:1) node[below right] {$\fcwt{1}$};
		\draw[->] (0:0) -- (90:1) node[left] {$\fcwt{2}$};
		\node[nom] at (-4.25,5.5) {$N=0$};
		\begin{scope}[shift={(-5,3)},scale=2.5]
			\node (12) at (0:0) {$\weylaut{1} \weylmod{2}{\airr{(\mu_1,\mu_2,\mu_0)}}$};
			\node (0) at (30:1) {$\airr{(\mu_0,\mu_1,\mu_2)}$};
			\node (21) at (-30:1) {$\weylaut{2} \weylmod{1}{\airr{(\mu_2,\mu_0,\mu_1)}}$};
			\draw (12) -- (0);
			\draw (21) -- (0);
			\draw (12) -- (21);
		\end{scope}
		\node[nom] at (4.25,5.5) {$N=1$ ($\mu_0 \in \NN$)};
		\begin{scope}[shift={(3.5,3.5)},scale=2.5]
			\node (12) at (0:0) {$\weylaut{1} \weylmod{2}{\airr{(\mu_1,\mu_2,\mu_0)}}$};
			\node (0) at (30:1) {$\airr{(\mu_0,\mu_1,\mu_2)}$};
			\node (21) at (-30:1) {$\weylaut{2} \weylmod{1}{\airr{(\mu_2,\mu_0,\mu_1)}}$};
			\node (3) at (-90:1) {$\weylmod{3}{\airr{(\mu_0,\mu_1,\mu_2)}}$};
			\draw (12) -- (0);
			\draw (21) -- (0);
			\draw (12) -- (21);
			\draw (12) -- (3);
			\draw (21) -- (3);
		\end{scope}
		\node[nom] at (-4.25,0) {$N=2$ ($\mu_0 \notin \NN$)};
		\begin{scope}[shift={(-4.25,-3.5)},scale=2.5]
			\node (0) at (0:0) {$\airr{(\mu_0,\mu_1,\mu_2)}$};
			\node (-1) at (30:1) {$\airr{(\mu_2,\mu_0,\mu_1)}$};
			\node (-2) at (90:1) {$\airr{(\mu_1,\mu_2,\mu_0)}$};
			\node (1) at (150:1) {$\weylmod{1}{\airr{(\mu_2,\mu_0,\mu_1)}}$};
			\node (2) at (-30:1) {$\weylmod{2}{\airr{(\mu_1,\mu_2,\mu_0)}}$};
			\node (12) at (-150:1) {$\weylaut{1} \weylmod{2}{\airr{(\mu_1,\mu_2,\mu_0)}}$};
			\node (21) at (-90:1) {$\weylaut{2} \weylmod{1}{\airr{(\mu_2,\mu_0,\mu_1)}}$};
			\draw (0) -- (-1);
			\draw (0) -- (-2);
			\draw (0) -- (1);
			\draw (0) -- (2);
			\draw (0) -- (12);
			\draw (0) -- (21);
			\draw (12) -- (21);
			\draw (21) -- (2);
			\draw (2) -- (-1);
			\draw (-1) -- (-2);
			\draw (-2) -- (1);
			\draw (1) -- (12);
		\end{scope}
		\node[nom] at (4.25,0) {$N=3$ ($\affine{\mu} \in \apwlat{\kk}$)};
		\begin{scope}[shift={(4.25,-3.5)},scale=2]
			\node (0) at (0:0) {$\airr{(\mu_0,\mu_1,\mu_2)}$};
			\node (-1) at (30:1) {$\airr{(\mu_2,\mu_0,\mu_1)}$};
			\node (-2) at (90:1) {$\airr{(\mu_1,\mu_2,\mu_0)}$};
			\node (1) at (150:1) {$\airr{(\mu_2,\mu_0,\mu_1)}$};
			\node (2) at (-30:1) {$\airr{(\mu_1,\mu_2,\mu_0)}$};
			\node (12) at (-150:1) {$\airr{(\mu_1,\mu_2,\mu_0)}$};
			\node (21) at (-90:1) {$\airr{(\mu_2,\mu_0,\mu_1)}$};
			\node (d1) at (30:1.6) {\rotatebox{30}{$\cdots$}};
			\node (d2) at (90:1.4) {\rotatebox{90}{$\cdots$}};
			\node (d3) at (150:1.6) {\rotatebox{150}{$\cdots$}};
			\node (d4) at (-150:1.6) {\rotatebox{-150}{$\cdots$}};
			\node (d5) at (-90:1.4) {\rotatebox{-90}{$\cdots$}};
			\node (d6) at (-30:1.6) {\rotatebox{-30}{$\cdots$}};
			\draw (0) -- (-1);
			\draw (0) -- (-2);
			\draw (0) -- (1);
			\draw (0) -- (2);
			\draw (0) -- (12);
			\draw (0) -- (21);
			\draw (12) -- (21);
			\draw (21) -- (2);
			\draw (2) -- (-1);
			\draw (-1) -- (-2);
			\draw (-2) -- (1);
			\draw (1) -- (12);
		\end{scope}
	\end{tikzpicture}
\caption{Illustrations of the part of the spectral flow orbit through $\airr{\mu}$, $\affine{\mu} \in \admuv$, corresponding to positive-energy modules.  Here, $N \in \set{0,1,2,3}$ denotes the number of Dynkin labels of $\affine{\mu}$ that lie in $\NN$.  Arrows indicate the spectral flow $\sfaut{\fcwt{}}$, where $\fcwt{} \in \set*{\pm\fcwt{1}, \pm\fcwt{2}, \pm(\fcwt{1}-\fcwt{2})}$.  (Note that $N=3$ requires $\kk \in \NN$, hence $\vv=1$.  Spectral flow is $\crlat$-periodic in this case.)} \label{fig:sforbits}
\end{figure}

\subsection{Degenerations} \label{sec:MinModDeg}

We have classified the irreducible (semi)\rhw\ $\minmoduv$-modules for general
admissible levels.  They form families that mirror the coherent families of
$\slthree$-modules from which they were constructed.  However, the members of
the latter families are only generically irreducible, hence the same is true
for their (semi)relaxed counterparts.  It is therefore interesting, and useful
for what follows, to determine explicit decomposition formulae when the
$\minmoduv$-modules constructed in this way are reducible.  We refer to these decompositions as the \emph{degenerations} of a given family of (semi)\rhwms.

We first consider the degenerations of the family of \srhwms\ $\asemi{\lambda}{[\mu]}$ corresponding to a given $\affine{\lambda} \in \sadmuv^1$.  According to \cref{prop:sdegen}, these occur when the coset $[\mu]$ in $(\lambda + \CC \sroot{1}) / \ZZ \sroot{1}$ takes the value $[\lambda]$ or $[\weylaut{1} \cdot \lambda]$.  Since the $\asemi{\lambda}{[\mu]}$ were constructed from an irreducible semisimple coherent family of $\levi$-modules, it follows that $\asemi{\lambda}{[\lambda]}$ has an irreducible submodule isomorphic to $\airr{\lambda}$.  Its complement is likewise an irreducible \hwm, with highest weight $\lambda + \sroot{1}$, but with respect to the Borel $\weylmod{1}{\borel}$ spanned by $\csub$, $e^2$, $e^3$ and $f^1$.  It is therefore isomorphic to the twist $\weylmod{1}{\airr{\weylaut{1} \cdot \lambda}}$.  The analysis for $[\mu] = [\weylaut{1} \cdot \lambda]$ is similar.

We thereby arrive at our first degeneration result.
\begin{proposition} \label{prop:sdegen}
	For $\kk$ admissible, the reducible \srhw\ $\minmoduv$-modules decompose as follows:
	\begin{equation}
		\asemi{\lambda}{[\lambda]} \cong \airr{\lambda} \oplus \weylmod{1}{\airr{\weylaut{1} \cdot \lambda}} \quad \text{and} \quad
		\asemi{\lambda}{[\weylaut{1} \cdot \lambda]} \cong \airr{\weylaut{1} \cdot \lambda} \oplus \weylmod{1}{\airr{\lambda}} \qquad
		\text{($\affine{\lambda} \in \sadmuv^1$)}.
	\end{equation}
\end{proposition}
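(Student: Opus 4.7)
The plan is to trace the decomposition of $\coh{\lambda}_{[\lambda]}$ as a $\levi$-module upwards through the construction of $\asemi{\lambda}{[\lambda]}$: first to a $\para$-module by declaring the nilradical $\nilrad$ to act trivially, then to an $\slthree$-module by parabolic induction followed by the irreducible quotient, and finally to an $\aslthree$-module by level-$\kk$ affinisation followed by the simple quotient. The starting observation, explained in \cref{sec:LieIrr}, is that $\coh{\lambda}_{[\lambda]}$ is a semisimple $\levi \cong \gltwo$-module whose weight support is $\lambda + \ZZ \sroot{1}$ and which splits into exactly two irreducible pieces: an \infdim\ \hwm\ submodule $\Mod{N}_1$ with highest weight $\lambda$ (since the $\sltwo$-weight $\pi(\lambda) = \tfrac{1}{2}\lambda_1 \sroot{1}$ is non-integral for $\affine{\lambda} \in \sadmuv^1$) together with a complementary \lwm\ $\Mod{N}_2$ whose bottom weight is $\lambda + \sroot{1}$.

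First I would carry out an elementary weight calculation: using \eqref{eq:shWaction}, $\weylmod{1}{(\lambda + \sroot{1})} = \lambda - (\lambda_1 + 1)\sroot{1} = \weylaut{1} \cdot \lambda$, so that $\weylmod{1}{\Mod{N}_2}$ is the irreducible \infdim\ \hw\ $\levi$-module of highest weight $\weylaut{1} \cdot \lambda$. Next, because nilradical extension and parabolic induction are additive functors, the decomposition $\coh{\lambda}_{[\lambda]} = \Mod{N}_1 \oplus \Mod{N}_2$ persists through to the induced $\slthree$-module; passing to irreducible quotients identifies the two resulting pieces as $\firr{\lambda}$ and $\weylmod{1}{\firr{\weylaut{1} \cdot \lambda}}$ respectively. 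Affinising at level $\kk$ and taking simple quotients then produces the two candidate summands $\airr{\lambda}$ and $\weylmod{1}{\airr{\weylaut{1} \cdot \lambda}}$, each of which is confirmed to be an $\minmoduv$-module by \cref{thm:hwclass}, using $\affine{\lambda} \in \sadmuv^1 \subset \admuv^{\wun}$ and $\weylaut{1} \cdot \affine{\lambda} \in \admuv^{\weylaut{1}}$.

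The main obstacle will be to verify that this top-space direct sum structure lifts to a genuine decomposition of $\asemi{\lambda}{[\lambda]}$ rather than producing a nontrivial self-extension of its two composition factors. For this I would argue that any such extension would, on restriction to the top space, yield a nontrivial extension of $\levi$-modules inside the semisimple coherent family $\coh{\lambda}$, contradicting semisimplicity; equivalently, the two $\aslthree$-submodules generated by the top-space summands have incomparable highest-weight data and intersect trivially because their sets of ground-state weights coincide with the disjoint weight supports of $\Mod{N}_1$ and $\Mod{N}_2$. The second identity in the proposition, concerning $\asemi{\lambda}{[\weylaut{1} \cdot \lambda]}$, then follows at once by interchanging the roles of $\Mod{N}_1$ and $\Mod{N}_2$ in the argument above, or more succinctly by applying $\weylaut{1}$ to the first identity and invoking the isomorphism $\weylmod{1}{\asemi{\lambda}{[\mu]}} \cong \asemi{\lambda}{[\weylmod{1}{\mu}]}$ from \cref{prop:Wtwists}.
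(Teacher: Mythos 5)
Your proposal is correct and follows essentially the same route as the paper: the paper's proof likewise traces the semisimple decomposition of the reducible summand $\coh{\lambda}_{[\lambda]}$ through the induction--affinisation construction, identifies one summand as $\airr{\lambda}$ and the complement as an irreducible \hwm\ of highest weight $\lambda + \sroot{1}$ with respect to the Borel $\weylmod{1}{\borel}$, hence as $\weylmod{1}{\airr{\weylaut{1} \cdot \lambda}}$, and treats $[\mu] = [\weylaut{1} \cdot \lambda]$ as similar. Your additional discussion of why no nonsplit extension can arise is welcome extra care, but in the paper this is simply built into the construction via the semisimplicity of the coherent family, so nothing further is needed.
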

\noindent This implies similar degenerations for the reducible \srhwms\ obtained by $\grp{D}_6$-twists.

The degenerations of the \rhwms\ $\arel{\lambda}{[\mu]}$, $\affine{\lambda} \in \radmuv^2$, require more work.  By \cref{prop:rdegen}, they occur when $[\mu] \in \dcsub / \rlat$ takes values in $\sing{\lambda}$, defined in \eqref{eq:defsing}.  We shall assume first that
\begin{equation}
	[\mu] \in [\lambda + \CC \sroot{1}] \subset \sing{\lambda}
\end{equation}
so that we may choose a representative $\mu \in \lambda + \CC \sroot{1}$ of $[\mu]$.  As before, $\arel{\lambda}{[\mu]}$ decomposes as a direct sum of irreducible submodules.  Because $\radmuv^2 \subseteq \sadmuv^1$, it is clear that one direct summand must be (isomorphic to) $\asemi{\lambda}{[\mu]}$, where now $[\mu] \in (\lambda + \CC \sroot{1}) / \ZZ \sroot{1}$, and its complement must be (isomorphic to) $\conjmod{\asemi{\lambda'}{[\mu']}}$, for some $\lambda' \in \sadmuv^1$ and $[\mu'] \in (\lambda' + \CC \sroot{1}) / \ZZ \sroot{1}$:
\begin{equation} \label{eq:rdeg1}
	\arel{\lambda}{[\mu]} \cong \asemi{\lambda}{[\mu]} \oplus \conjmod{\asemi{\lambda'}{[\mu']}} \qquad \text{($\mu \in \lambda + \CC \sroot{1}$)}.
\end{equation}
As $\rlat \cap \CC \sroot{1} = \ZZ \sroot{1}$, this decomposition is independent of the choice of $\mu$.

To identify $\lambda'$ and $[\mu']$, note that \eqref{eq:rdeg1} implies that
\begin{equation}
	\conjmod{\asemi{\lambda}{[\mu]}} \oplus \asemi{\lambda'}{[\mu']} \cong \conjmod{\arel{\lambda}{[\mu]}}
	= \dynkaut \weylmod{3}{\arel{\lambda}{[\mu]}} \cong \dynkmod{\arel{\lambda}{[\weylmod{3}{\mu}]}}
	\cong \arel{\conjaut \weylaut{2} \cdot \lambda}{[\conjmod{\mu}]},
\end{equation}
by \cref{prop:Wtwists,eq:dynkrel}.  Comparing with \eqref{eq:rdeg1}, this identifies $\lambda'$ as $\conjaut \weylaut{2} \cdot \lambda$.  To determine $[\mu']$, we need to find a representative $\mu' \in \lambda' + \CC \sroot{1}$ of $[\conjmod{\mu}] \in \dcsub / \rlat$.  Since $\affine{\lambda} \in \radmuv^2$, we compute that
\begin{equation}
	\conjmod{\mu} - \lambda' = \lambda - \mu + 2 \sroot{1} - (\lambda_2^I - 1) \sroot{2},
\end{equation}
hence we may take $\mu' = \conjmod{\mu} + (\lambda_2^I - 1) \sroot{2}$ (since $\lambda - \mu + 2 \sroot{1} \in \CC \sroot{1}$).  Indeed, this choice gives
\begin{equation}
	\arel{\lambda'}{[\mu']} = \arel{\conjaut \weylaut{2} \cdot \lambda}{[\conjmod{\mu}]} \cong \asemi{\lambda'}{[\mu']} \oplus \conjmod{\asemi{\lambda}{[\mu]}} \qquad \text{($\mu' \in \lambda' + \CC \sroot{1}$)},
\end{equation}
which is \eqref{eq:rdeg1} with $\lambda \leftrightarrow \lambda'$ and $\mu \leftrightarrow \mu'$.  We conclude that the explicit degeneration in this case is
\begin{equation} \label{eq:rdeg}
	\arel{\lambda}{[\mu]} \cong \asemi{\lambda}{[\mu]} \oplus \conjmod{\asemi{\conjaut \weylaut{2} \cdot \lambda}{[\conjmod{\mu} + (\lambda_2^I - 1) \sroot{2}]}} \qquad \text{($\mu \in \lambda + \CC \sroot{1}$)}.
\end{equation}

We still have to repeat this analysis for the other curves in $\sing{\lambda}$: $[\weylaut{1} \cdot \lambda + \CC \sroot{2}]$ and $[\lambda + \CC \sroot{3}]$.  However, the degenerations for these curves may be deduced from \eqref{eq:rdeg} by $\weyl$-twisting.  Indeed, for $\affine{\lambda} \in \radmuv^2$ and $\mu \in \lambda + \CC \sroot{3}$, we may twist $\arel{\lambda}{[\mu]}$ by $\weylaut{2}$ to obtain (using \cref{prop:Wtwists})
\begin{equation}
	\weylmod{2}{\arel{\lambda}{[\mu]}}
	\cong \arel{\lambda}{[\weylmod{2}{\mu}]}
	= \arel{\lambda}{[\weylmod{2}{\mu} + \lambda^I_2 \sroot{2}]} \quad \text{and} \quad
	\weylmod{2}{\mu} + \lambda^I_2 \sroot{2} \in \lambda + \CC \sroot{1}.
\end{equation}
Thus, $\weylaut{2}$ maps degenerate \rhwms\ with $[\mu] \in [\lambda + \CC \sroot{3}]$ to those with $[\lambda + \CC \sroot{1}]$ (and vice versa).  Similarly, when $\mu \in \weylaut{1} \cdot \lambda + \CC \sroot{2}$, twisting by $\weylaut{1}$ gives
\begin{equation}
	\weylmod{1}{\arel{\lambda}{[\mu]}}
	\cong \arel{\lambda}{[\weylmod{1}{\mu}]}
	= \arel{\lambda}{[\weylaut{1} \cdot \mu]} \quad \text{and} \quad \weylaut{1} \cdot \mu \in \lambda + \CC \sroot{3},
\end{equation}
so $\weylaut{1}$ exchanges the degenerations for $[\mu] \in [\lambda + \CC \sroot{3}]$ with those for $[\mu] \in [\weylaut{1} \cdot \lambda + \CC \sroot{2}]$.  Combining these identifications with the known degeneration \eqref{eq:rdeg} then completes the analysis of the relaxed degenerations.
\begin{proposition} \label{prop:rdegen}
	For $\kk$ admissible, the reducible \rhw\ $\minmoduv$-modules decompose as follows:
	\begin{equation}
		\arel{\lambda}{[\mu]} \cong
		\begin{cases*}
			\asemi{\lambda}{[\mu]} \oplus \conjmod{\asemi{\conjaut \weylaut{2} \cdot \lambda}{[\conjmod{\mu} + (\lambda_2^I - 1) \sroot{2}]}} & if $\mu \in \lambda + \CC \sroot{1}$, \\
			\weylaut{1} \weylmod{2}{\asemi{\lambda}{[\weylaut{2} \weylmod{1}{\mu} + (\lambda^I_2-1) \sroot{2}]}} \oplus \conjaut \weylaut{1} \weylmod{2}{\asemi{\conjaut \weylaut{2} \cdot \lambda}{[\conjaut \weylaut{2} \weylmod{1}{\mu}]}} & if $\mu \in \weylaut{1} \cdot \lambda + \CC \sroot{2}$, \\
			\weylmod{2}{\asemi{\lambda}{[\weylmod{2}{\mu} + \lambda^I_2 \sroot{2}]}} \oplus \conjaut \weylmod{2}{\asemi{\conjaut \weylaut{2} \cdot \lambda}{[\conjaut \weylaut{2} \cdot \mu]}} & if $\mu \in \lambda + \CC \sroot{3}$.
		\end{cases*}
	\end{equation}
\end{proposition}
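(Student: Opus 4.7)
The plan is to assemble the three cases by proving the $\mu \in \lambda + \CC \sroot{1}$ case directly and then deducing the remaining two by Weyl twisting, reusing the computations laid out in the paragraphs immediately preceding the statement.

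For the first case, I start from the construction of $\arel{\lambda}{[\mu]}$ as the simple quotient of the affinisation of an irreducible direct summand $\coh{\lambda}_{[\mu]}$ of a semisimple coherent $\slthree$-family. By Mathieu's theory, at the two singular cosets $[\lambda]$ and $[\weylaut{1} \cdot \lambda]$ lying on the line $[\lambda + \CC \sroot{1}]$, the summand $\coh{\lambda}_{[\mu]}$ splits into two irreducible semidense pieces. Since $\radmuv^2 \subseteq \sadmuv^1$, inducing up to $\aslthree$ and passing to the simple quotient yields a direct sum of two irreducible \srhwms; one summand is immediately $\asemi{\lambda}{[\mu]}$. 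To identify the other, I twist the candidate decomposition by $\conjaut$ and apply \cref{prop:Wtwists,prop:dtwists}, giving $\conjmod{\arel{\lambda}{[\mu]}} \cong \arel{\conjaut \weylaut{2} \cdot \lambda}{[\conjmod{\mu}]}$. Matching this against the putative decomposition (with the roles of $\lambda$ and $\mu$ swapped) forces the complement to be $\conjmod{\asemi{\conjaut \weylaut{2} \cdot \lambda}{[\mu']}}$. The explicit shift $(\lambda^I_2 - 1) \sroot{2}$ drops out of the direct calculation $\conjmod{\mu} - (\conjaut \weylaut{2} \cdot \lambda) = (\lambda - \mu) + 2 \sroot{1} - (\lambda^I_2 - 1) \sroot{2}$, which exhibits a representative of $[\conjmod{\mu}]$ lying in the required affine line $(\conjaut \weylaut{2} \cdot \lambda) + \CC \sroot{1}$.

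For the cases $\mu \in \lambda + \CC \sroot{3}$ and $\mu \in \weylaut{1} \cdot \lambda + \CC \sroot{2}$, I exploit the identities $\weylmod{2}{\sroot{3}} = \sroot{1}$ and $\weylmod{1}{\sroot{2}} = \sroot{3}$, which show that $\weylaut{2}$ and $\weylaut{1}$ permute the three curves making up $\sing{\lambda}$. Applying $\weylaut{2}$ to the first-case decomposition, after replacing $\mu$ by the representative $\weylmod{2}{\mu} + \lambda^I_2 \sroot{2} \in \lambda + \CC \sroot{1}$ of $[\weylmod{2}{\mu}]$, and using both $\weylmod{2}{\arel{\lambda}{[\nu]}} \cong \arel{\lambda}{[\weylmod{2}{\nu}]}$ from \cref{prop:Wtwists} and its semidense analogue, produces the $\mu \in \lambda + \CC \sroot{3}$ formula. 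Further pre-composition with $\weylaut{1}$ (which maps the $\sroot{3}$-line to the $\sroot{2}$-line) then yields the $\mu \in \weylaut{1} \cdot \lambda + \CC \sroot{2}$ formula; equivalently, one may twist the first-case decomposition directly by $\weylaut{1} \weylaut{2}$, carefully tracking both coset representatives through.

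The main obstacle is the careful bookkeeping of coset representatives. On the relaxed side the cosets lie in $\dcsub / \rlat$, but on the semirelaxed side they lie in the finer quotient $(\lambda + \CC \sroot{1}) / \ZZ \sroot{1}$, so each Weyl twist must be paired with an integer multiple of $\sroot{2}$ that realigns $\weylmod{i}{\mu}$ (or $\conjmod{\mu}$) with the correct affine line. Ensuring that these corrections produce exactly the shifts $\lambda^I_2 \sroot{2}$ and $(\lambda^I_2 - 1) \sroot{2}$ appearing in the statement, and that the resulting $\weyl$-twists of semidense modules are consistently expressed (using the identifications of \cref{prop:Wtwists,prop:dtwists}), is the principal arithmetic burden of the proof, though each individual calculation reduces to the explicit formulae for $\weylaut{i}$, $\dynkaut$ and $\conjaut$ acting on $\dcsub$.
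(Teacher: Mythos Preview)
Your proposal is correct and follows essentially the same route as the paper: establish the $\mu \in \lambda + \CC \sroot{1}$ case directly by identifying the conjugate semidense summand via $\conjmod{\arel{\lambda}{[\mu]}} \cong \arel{\conjaut \weylaut{2} \cdot \lambda}{[\conjmod{\mu}]}$ (using \cref{prop:Wtwists,prop:dtwists}) and the explicit computation $\conjmod{\mu} - (\conjaut \weylaut{2} \cdot \lambda) = (\lambda-\mu) + 2\sroot{1} - (\lambda^I_2-1)\sroot{2}$, then transport to the other two curves by the $\weylaut{2}$- and $\weylaut{1}$-twists that permute $\sing{\lambda}$. One small wording issue: the decomposition into two irreducible semidense summands holds for \emph{every} $[\mu]$ on the line $[\lambda + \CC\sroot{1}]$, not only at the special cosets $[\lambda]$ and $[\weylaut{1}\cdot\lambda]$ --- those are the points where the semidense summands themselves further degenerate into highest-weight modules.
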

\noindent We emphasise here that the conditions on the \rhs\ have to be interpreted as requiring that the coset $[\mu] \in \dcsub / \rlat$ on the \lhs\ has a representative $\mu \in \dcsub$ in the set given.

Of course, one can combine the information in \cref{prop:sdegen,prop:rdegen} to determine what happens to the degenerate $\arel{\lambda}{[\mu]}$ when their semirelaxed direct summands also degenerate.  This happens if and only if $[\mu]$ belongs to the intersection of two of the three curves comprising $\sing{\lambda}$,\footnote{It is easy to check that $\affine{\lambda} \in \radmuv^2$ implies that there are no triple intersections.} these intersections being $[\lambda]$, $[\weylaut{1} \cdot \lambda]$ and $[\weylaut{2} \weylaut{1} \cdot \lambda]$.  For example, $[\lambda]$ belongs to both $[\lambda + \CC \sroot{1}]$ and $[\lambda + \CC \sroot{3}]$, so \cref{prop:sdegen,prop:rdegen} give both
\begin{subequations}
	\begin{align}
		\arel{\lambda}{[\lambda]}
		&\cong \asemi{\lambda}{[\lambda]} \oplus \conjmod{\asemi{\conjaut \weylaut{2} \cdot \lambda}{[\conjaut \weylaut{2} \cdot \lambda]}}
		\cong \airr{\lambda} \oplus \weylmod{1}{\airr{\weylaut{1} \cdot \lambda}} \oplus \conjmod{\airr{\conjaut \weylaut{2} \cdot \lambda}} \oplus \conjaut \weylmod{1}{\airr{\weylaut{1} \conjaut \weylaut{2} \cdot \lambda}} \notag \\
		&= \airr{\lambda} \oplus \weylmod{1}{\airr{\weylaut{1} \cdot \lambda}} \oplus \conjmod{\airr{\conjaut \weylaut{2} \cdot \lambda}} \oplus \weylaut{2} \weylmod{1}{\airr{\weylaut{1} \cdot \lambda}}
		\intertext{(where we note that $\conjaut \weylaut{1} = \weylaut{1} \weylaut{3} \dynkaut = \weylaut{2} \weylaut{1} \dynkaut$ and $\weylaut{1} \conjaut \weylaut{2} = \dynkaut \weylaut{3} \weylaut{1} \weylaut{2} = \dynkaut \weylaut{1}$, then apply \cref{prop:dtwists}) and}
		\arel{\lambda}{[\lambda]}
		&\cong \weylmod{2}{\asemi{\lambda}{[\lambda]}} \oplus \conjaut \weylmod{2}{\asemi{\conjaut \weylaut{2} \cdot \lambda}{[\conjaut \weylaut{2} \cdot \lambda]}}
		\cong \weylmod{2}{\airr{\lambda}} \oplus \weylaut{2} \weylmod{1}{\airr{\weylaut{1} \cdot \lambda}} \oplus \conjaut \weylmod{2}{\airr{\conjaut \weylaut{2} \cdot \lambda}} \oplus \conjaut \weylaut{2} \weylmod{1}{\airr{\weylaut{1} \conjaut \weylaut{2} \cdot \lambda}} \notag \\
		&= \weylmod{2}{\airr{\lambda}} \oplus \weylaut{2} \weylmod{1}{\airr{\weylaut{1} \cdot \lambda}} \oplus \conjaut \weylmod{2}{\airr{\conjaut \weylaut{2} \cdot \lambda}} \oplus \weylmod{1}{\airr{\weylaut{1} \cdot \lambda}}
	\end{align}
\end{subequations}
(which follows similarly).  The fact that these two decompositions agree relies on the fact that $\affine{\lambda} \in \radmuv^2$ implies that $\lambda_2 = \lambda^I_2 \in \NN$, hence $\weylmod{2}{\airr{\lambda}} \cong \airr{\lambda}$ by \cref{prop:Wtwists}, and $(\conjaut \weylaut{2} \cdot \lambda)_2 = \lambda^I_2 \in \NN$, hence $\conjaut \weylmod{2}{\airr{\conjaut \weylaut{2} \cdot \lambda}} \cong \conjmod{\airr{\conjaut \weylaut{2} \cdot \lambda}}$.  The checks for $[\mu] = [\weylaut{1} \cdot \lambda] \in [\lambda + \CC \sroot{1}] \cap [\weylaut{1} \cdot \lambda + \CC \sroot{2}]$ and $[\mu] = [\weylaut{2} \weylaut{1} \cdot \lambda] \in [\weylaut{1} \cdot \lambda + \CC \sroot{2}] \cap [\lambda + \CC \sroot{3}]$ are very similar and are left to the reader.

Assuming, as always, that $\kk$ is admissible, the degeneration picture for positive-energy $\minmoduv$-modules may then be summarised as follows (up to $\grp{D}_6$-twists):
\begin{itemize}
	\item For $\affine{\lambda} \in \admuv \setminus \sadmuv$, there is only an irreducible \hwm\ $\airr{\lambda}$.
	\item For $\affine{\lambda} \in \sadmuv^1 \setminus \radmuv^2$, the \srhwms\ $\asemi{\lambda}{[\mu]}$, $[\mu] \in (\lambda + \CC \sroot{1}) / \ZZ \sroot{1}$, are irreducible for all cosets $[\mu]$ but two.  At these two cosets, the $\asemi{\lambda}{[\mu]}$ decompose into the direct sum of two irreducible \hwms\ as per \cref{prop:sdegen}.
	\item For $\affine{\lambda} \in \radmuv^2$, the \rhwms\ $\arel{\lambda}{[\mu]}$, $[\mu] \in \dcsub / \rlat$, are irreducible for all cosets $[\mu]$ except those belonging to the union of three curves.  On these three curves, the $\arel{\lambda}{[\mu]}$ decompose into the direct sum of two \srhwms\ as per \cref{prop:rdegen}.  If $[\mu]$ belongs to exactly one of these curves, then these semirelaxed modules are irreducible.  Otherwise, they are reducible and $\arel{\lambda}{[\mu]}$ decomposes into the direct sum of four irreducible \hwms.
\end{itemize}

\section{The minimal model \texorpdfstring{$\themm$}{A2(3,2)}} \label{sec:minmod32}

The previous \lcnamecref{sec:MinMod} gives a complete account of the irreducible positive-energy modules of the $\slthree$ minimal models.  Together with their spectral flow orbits, we expect that these modules will form the building blocks of an associated $\slthree$ minimal model \cft.  To test this, we shall explore the modularity of the characters and check if the Grothendieck fusion coefficients, as computed by the standard Verlinde formula of \cite{CreLog13,RidVer14}, are nonnegative integers.

As we shall explain, technical reasons will restrict the present exploration to $\uu = 3$ and $\vv = 2$.  This exploration is by no means trivial, nor is it without independent interest.  In fact, the $\slthree$ minimal model $\themm$ is closely related to several other interesting logarithmic \voas\ including the $N=4$ superconformal minimal model with $c=-9$ \cite{AdaRea14}, the Feigin--Tipunin algebra $W_{A_2}^0(2)$ \cite{AdaPar20} and Semikhatov's octuplet algebra $W_{A_2}(2)$ \cite{SemNot13}.  Further details concerning these relations may be found in the companion paper \cite{CreKaz21}.

\subsection{Characters and linear independence} \label{sec:32linindep}

The character of a weight $\aslthree$-module $\affine{\Mod{M}}$ is defined to be the following formal power series in $\yy$, $\zz$ and $\qq$:
\begin{equation} \label{eq:deffchar}
	\fch{\affine{\Mod{M}}}{\yy}{\zz}{\qq} = \traceover{\affine{\Mod{M}}} \yy^K \zz^{h_0} \qq^{L_0 - \cc/24}.
\end{equation}
Here, we do not fix a choice of $h \in \csub$, but instead leave it unspecified --- the eigenvalue of $\zz^{h_0}$ on a weight vector of $\slthree$-weight $\lambda$ is then, formally, $\zz^{\lambda}$.  We shall also introduce new variables $\theta \in \CC$, $\zeta \in \csub$ and $\tau \in \HH$, where $\HH \subset \CC$ is the upper-half plane, satisfying
\begin{equation}
	\yy = \ee^{2 \pi \ii \theta}, \quad \zz^{h_0} = \ee^{2 \pi \ii \zeta} \quad \text{and} \quad \qq = \ee^{2 \pi \ii \tau}.
\end{equation}
In these variables, the generic expression for the character takes the form
\begin{equation} \label{eq:defmchar}
	\mch{\affine{\Mod{M}}}{\theta}{\zeta}{\tau} = \traceover{\affine{\Mod{M}}} \exp \sqbrac[\big]{2 \pi \ii (K \theta + \zeta_0 + (L_0 - \cc/24) \tau)}.
\end{equation}

If $\omega$ is an automorphism of $\aslthree$, then the character of $\func{\omega}{\affine{\Mod{M}}}$ is obtained from \eqref{eq:defmchar} by inserting $\omega^{-1}$ into the argument of the exponential.  In particular, the characters of the $\grp{D}_6$-twists and spectral flows of $\affine{\Mod{M}}$ are given by
\begin{equation} \label{eq:autchar}
	\begin{aligned}
		\mch{\func{\omega}{\affine{\Mod{M}}}}{\theta}{\zeta}{\tau} &= \mch{\affine{\Mod{M}}}{\theta}{\func{\omega^{-1}}{\zeta}}{\tau}, & \omega &\in \grp{D}_6, \\ \text{and} \quad
		\mch{\sfmod{\xi}{\affine{\Mod{M}}}}{\theta}{\zeta}{\tau} &= \mch{\affine{\Mod{M}}}{\theta + \killing{\zeta}{\xi} + \tfrac{1}{2} \killing{\xi}{\xi} \tau}{\zeta + \tau \xi}{\tau}, & \xi &\in \cwlat.
	\end{aligned}
\end{equation}

When $\kk$ is admissible, the character of a given irreducible \hw\ $\minmoduv$-module is explicitly known \cite{KacMod88} as a holomorphic function converging on a certain subdomain of $(\theta, \zeta, \tau) \in \CC \times \csub \times \HH$.  This character may also be analytically continued to a meromorphic function on $\CC \times \csub \times \HH$ and the result has nice modular properties.  Unfortunately, the modular transforms do not respect the actual convergence regions of the \hw\ characters and using them to compute fusion coefficients via Verlinde's formula yields unacceptable results \cite{KohFus88,KacRem17}, namely negative multiplicities.

As was pointed out in \cite{RidSL208}, the root cause of this failure of modularity is the fact that the meromorphic extensions identify characters of inequivalent $\minmoduv$-modules, in particular those of the irreducible \hwms\ with certain of their spectral flows.  This lack of linear independence motivated the development of the \emph{standard module formalism} of \cite{CreLog13,RidVer14} in which characters are treated as formal power series (or distributions) in $\zz$ and $\zeta$ whose coefficients are meromorphic functions on $(\theta, \tau) \in \CC \times \HH$.  This minor change of viewpoint often allows one to identify a collection of \emph{standard modules} whose characters form a topological basis for the space spanned by all relevant characters.

For the $\slthree$ minimal models $\minmoduv$ with $\kk$ admissible and $\vv>1$, we expect that a set of standard modules will be provided by the \rhwms, some (possibly none) of the \srhwms\ and their spectral flows.  The standard characters can then be deduced from those of the \hwms\ by generalising the technology developed in \cite{KawRel18} or by lifting Mathieu's twisted localisation functors to the affine setting, see \cite{KawRel20}.  The following result is, in particular, germane.
\begin{theorem}[{\cite[Thm.~1.1(b)]{KawRel20}}] \label{thm:rchar}
	For $\kk$ admissible, the relaxed characters of $\minmoduv$ take the form
	\begin{equation} \label{eq:rchar}
		\fch{\arel{\lambda}{[\mu]}}{\yy}{\zz}{\qq} = \yy^{\kk} \frac{\ch{H^0_{\textup{min.}}(\airr{\lambda})}(\qq)}{\eta(\qq)^4} \sum_{\nu \in [\mu]} \zz^{\nu}, \quad \affine{\lambda} \in \radmuv^2,\ [\mu] \in \dcsub / \rlat.
	\end{equation}
\end{theorem}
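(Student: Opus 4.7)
The plan is to establish the formula in two stages: first to prove that the $\zz$-dependence of the character factors as the weight-lattice sum $\sum_{\nu \in [\mu]} \zz^{\nu}$, and then to identify the remaining $\qq$-dependent coefficient via minimal quantum Hamiltonian reduction.

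For the factorization, I would argue the existence of a formal $\qq$-series $f_\lambda(\qq)$, depending only on $\lambda$, such that
\begin{equation*}
\fch{\arel{\lambda}{[\mu]}}{\yy}{\zz}{\qq} = \yy^{\kk} f_\lambda(\qq) \sum_{\nu \in [\mu]} \zz^{\nu}.
\end{equation*}
Since $\affine{\lambda} \in \radmuv^2$, the top space of $\arel{\lambda}{[\mu]}$ is the dense module $\frel{\lambda}{[\mu]}$, realised as a direct summand of the semisimple coherent family $\coh{\lambda}$, and hence has all weight multiplicities equal to $\lambda^I_2 + 1$ (as observed after \cref{thm:class}). To propagate this uniformity to every $L_0$-grade, I would pick, for each root $\alpha \in \rlat$, a root vector $x$ of weight $\alpha$ and a mode $x_{-j} \in \aslneg$; the action of $x_{-j}$ carries the grade-$n$ subspace at weight $\nu$ into the grade-$(n+j)$ subspace at weight $\nu + \alpha$, and combining such actions via the PBW decomposition of $\envalg{\aslneg}$ yields linear isomorphisms between the grade-$n$ subspaces at different weights in $[\mu]$. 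The subtle point is that these isomorphisms must survive the passage to the irreducible quotient; this is precisely what the affine lift of Mathieu's twisted localisation functor ensures, as implemented in \cite{KawRel20}.

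Given the factorization, $f_\lambda(\qq)$ is determined by any single weight-space count at each grade. To match it with $\ch{H^0_{\textup{min.}}(\airr{\lambda})}(\qq) / \eta(\qq)^4$, I would invoke Arakawa's minimal quantum Hamiltonian reduction: the functor $H^0_{\textup{min.}}$ carries the admissible-level irreducible \hwm{} $\airr{\lambda}$ to an irreducible Bershadsky--Polyakov module whose character is an explicit specialisation of the Kac--Wakimoto character of $\airr{\lambda}$. The denominator $\eta(\qq)^{-4}$ tracks the four transverse bosonic directions --- the complex dimension of the minimal nilpotent orbit in $\slthree$ --- that are excised by the reduction; reinstating them as a free-field/lattice tensor product with $H^0_{\textup{min.}}(\airr{\lambda})$ recovers $\arel{\lambda}{[\mu]}$ in the spirit of inverse reduction.

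The main obstacle will be the survival of coherence under the irreducible quotient. A direct PBW count handles the affine relaxed Verma cover of $\arel{\lambda}{[\mu]}$, but the maximal proper submodule could in principle distort weight multiplicities. The resolution is that every $\affine{\lambda} \in \radmuv^2$ is dominant regular, so Kac--Kazhdan theory controls the singular vectors in the corresponding \hw{} Verma module, and the affine twisted localisation transports this control to the relaxed Verma module. Once the factorization is secured, identifying $f_\lambda(\qq)$ reduces to Arakawa's BP character formula combined with standard free-field arithmetic, and the formula extends to singular $[\mu] \in \sing{\lambda}$ by additivity of characters together with \cref{prop:rdegen}.
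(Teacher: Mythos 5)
You should first be aware that the paper contains no proof of \cref{thm:rchar} at all: it is imported verbatim from \cite[Thm.~1.1(b)]{KawRel20}, so there is no in-paper argument to compare against, and your outline must stand on its own. Its two-stage skeleton (constancy of the string functions in the weight direction, then identification of the common string function) is the right shape, but both stages have genuine gaps. For stage one, the mode argument does not do what you claim: a negative mode $x_{-j}$ shifts the conformal grade as well as the weight, so PBW combinatorics only factorises the character of the relaxed \emph{Verma} (induced) module, where factorisation is automatic because induction from the top space is free over $\envalg{\aslneg}$. To compare weight spaces within a \emph{fixed} grade you need the zero modes, i.e.\ precisely the grade-by-grade torsion-freeness/twisted-localisation statement for the \emph{simple} quotient --- and this is the heart of the theorem, which you cannot defer to ``the affine lift of Mathieu's functor as implemented in \cite{KawRel20}'', since that is the very machinery whose output you are asked to establish. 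Similarly, ``Kac--Kazhdan controls the singular vectors of the highest-weight Verma module'' does not by itself show that the maximal submodule of the relaxed Verma module has a character that factorises through the lattice sum; already in rank $1$ this required the detailed relaxed singular-vector analysis of \cite{RidRel15,KawRel18}.

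For stage two, the identification $f_{\lambda}(\qq) = \ch{H^0_{\textup{min.}}(\airr{\lambda})}(\qq)/\eta(\qq)^4$ is the main content of \cite{KawRel20} and is not delivered by the heuristic you offer: ``reinstating four bosonic directions as a free-field/lattice tensor product'' is an inverse-reduction picture that is nowhere established in your sketch, and it is not literally true module-by-module that a relaxed $\aslthree$-module is a tensor product of a Bershadsky--Polyakov module with free fields; making such a realisation precise requires the embedding of $\minmoduv$ into the reduced algebra tensored with auxiliary fields, a decomposition statement for the resulting tensor-product modules, and a relaxed character formula one rank down --- input of comparable difficulty to the theorem itself. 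The established route instead goes through the ``$-$''-type (Ramond-twisted) minimal reduction directly, using Arakawa's nonvanishing, irreducibility and character results \cite{AraRep11} together with an argument matching the reduction character to the string function of the simple relaxed module rather than of its Verma cover; none of this is supplied beyond naming. Your closing remark is fine as far as it goes: once the generic formula is known, the cases $[\mu] \in \sing{\lambda}$ do follow from coherence of the family and additivity of characters, consistently with \cref{prop:rdegen}.
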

\noindent Here, $H^0_{\textup{min.}}(\airr{\lambda})$ denotes a ``$-$-type'' minimal \qhr\ of $\airr{\lambda}$.\footnote{We recall that the ``$-$-type'' \qhr\ was introduced by Frenkel, Kac and Wakimoto in \cite{FreCha92} for regular (principal) nilpotent elements.  It differs from the usual ``$+$-type'' regular reduction in that it gauges the negative root vectors instead of the positive ones.  Although both reductions give isomorphic W-algebras, the corresponding functors on modules are different.  The reduction functor used in \cref{thm:rchar} is a generalisation of this $-$-type functor to all nilpotents due to Kac--Wakimoto \cite{KacRat07} and Arakawa \cite{AraRep11}.}  We refer to \cite{AraRep11} for details, only noting here that $H^0_{\textup{min.}}(\airr{\lambda})$ is nonzero because $\affine{\lambda} \in \radmuv^2$ implies that $\lambda_1$ and $\lambda_1 + \lambda_2 + 1$ are not nonnegative integers \cite[Thm.~5.7.1]{AraRep11}.  (This distinguishes the reduction from the usual ``$+$-type'' minimal reduction of \cite{KacQua03,KacQua04} because in this case the reduction would be nonzero if and only if $\lambda_0$ is not a nonnegative integer \cite[Thm.~6.7.4]{AraRep05}.)

An easy application of \cref{prop:Wtwists,prop:dtwists} shows that the module conjugate to $\arel{\lambda}{[\mu]}$, $\lambda \in \radmuv^2$ and $[\mu] \in \dcsub / \rlat$, is $\arel{\lambda'}{[-\mu]}$, where $\affine{\lambda}' = (\lambda_1+\fracuv, \lambda_0-\fracuv, \lambda_2) \in \radmuv^2$.  Combining this with \eqref{eq:autchar} and \eqref{eq:rchar} thus gives
\begin{equation}
	\yy^{\kk} \frac{\ch{H^0_{\textup{min.}}(\airr{\lambda})}(\qq)}{\eta(\qq)^4} \sum_{\nu \in [\mu]} \zz^{-\nu}
	= \ch{\conjmod{\arel{\lambda}{[\mu]}}} = \ch{\arel{\lambda'}{[-\mu]}}
	= \yy^{\kk} \frac{\ch{H^0_{\textup{min.}}(\airr{\lambda'})}(\qq)}{\eta(\qq)^4} \sum_{\nu \in [-\mu]} \zz^{\nu},
\end{equation}
from which we conclude that $H^0_{\textup{min.}}(\airr{\lambda})$ and $H^0_{\textup{min.}}(\airr{\lambda'})$ have the same $\qq$-character.  However, it now follows that
\begin{equation}
	\fch{\arel{\lambda}{[\mu]}}{\yy}{\zz}{\qq} = \fch{\arel{\lambda'}{[\mu]}}{\yy}{\zz}{\qq},
\end{equation}
which gives an undesired linear dependence of characters unless $\lambda = \lambda'$.  We can therefore only save linear independence if ${\lambda'_0}^I = \lambda^I_1$ and ${\lambda'_0}^{F,\wun} = \lambda^{F,\wun}_1 - 1$ for all $\affine{\lambda} \in \radmuv^2$.  Since $\affine{\lambda}^I \in \apwlat{\uu-3}$ and $\affine{\lambda}^{F,\wun} \in \apwlat{\vv-1}$ satisfies $\lambda^{F,\wun}_1 \ne 0$, linear independence of the relaxed characters thus requires $\uu=3$ and $\vv=2$.
\begin{corollary} \label{cor:only32}
	Let $\kk$ be admissible with $\vv>1$ (so that \rhw\ $\minmoduv$-modules exist).  Then, the characters of the $\arel{\lambda}{[\mu]}$, with $\affine{\lambda} \in \radmuv^2$ and $[\mu] \in \dcsub / \rlat$, are linearly dependent unless $\uu=3$ and $\vv=2$.
\end{corollary}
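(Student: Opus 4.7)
The plan is to recognise that the paragraphs immediately preceding the statement of the corollary already contain the entire structural input. From the character formula \eqref{eq:rchar} and the identification of $\conjmod{\arel{\lambda}{[\mu]}}$ as $\arel{\lambda'}{[-\mu]}$, together with the fact that the sum $\sum_{\nu \in [\mu]} \zz^{\nu}$ is invariant under the simultaneous flip $([\mu], \zz) \leftrightarrow ([-\mu], \zz^{-1})$, one concludes that $\fch{\arel{\lambda}{[\mu]}}{\yy}{\zz}{\qq} = \fch{\arel{\lambda'}{[\mu]}}{\yy}{\zz}{\qq}$ for every $\affine{\lambda} \in \radmuv^2$ and every $[\mu] \in \dcsub/\rlat$. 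The corollary therefore reduces to exhibiting some $\affine{\lambda} \in \radmuv^2$ with $\affine{\lambda}' \ne \affine{\lambda}$ whenever $(\uu,\vv) \ne (3,2)$, because then $\ch{\arel{\lambda}{[\mu]}} - \ch{\arel{\lambda'}{[\mu]}} = 0$ is a nontrivial linear relation.

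The first step is to translate the equation $\affine{\lambda} = \affine{\lambda}'$ into conditions on the integral and fractional parts of $\affine{\lambda}$. Since $\affine{\lambda}' = (\lambda_1 + \uu/\vv,\, \lambda_0 - \uu/\vv,\, \lambda_2)$ and the level and third Dynkin label agree automatically, the equation reduces to $\lambda_0 = \lambda_0'$. Substituting the decomposition \eqref{eq:adm1dynk} and using $\gcd\set{\uu,\vv} = 1$ to separate the rational and integral contributions cleanly yields the pair of conditions $\lambda_0^I = \lambda_1^I$ and $\lambda_0^{F,\wun} = \lambda_1^{F,\wun} - 1$.

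The second step is to ask when these two conditions can be satisfied for every $\affine{\lambda} \in \radmuv^2$. For the integral parts, $\lambda^I$ ranges over all of $\apwlat{\uu-3}$, so $\lambda_0^I = \lambda_1^I$ must fail unless $\apwlat{\uu-3}$ consists of the single triple $(0,0,0)$, which forces $\uu = 3$. For the fractional parts, the definition of $\radmuv^2$ requires $\lambda_2^{F,\wun} = 0$ and $\lambda_1^{F,\wun} \in \set{1,\ldots,\vv-1}$, whence $\lambda_0^{F,\wun} = \vv - 1 - \lambda_1^{F,\wun}$; the constraint then reads $2\lambda_1^{F,\wun} = \vv$, which can hold simultaneously for every admissible value of $\lambda_1^{F,\wun}$ only if there is a single such value, i.e.\ only if $\vv - 1 = 1$, yielding $\vv = 2$. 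Conversely, when $\uu > 3$ one may simply pick $\lambda^I = (\uu - 3, 0, 0)$ and when $\vv > 2$ one may pick $\lambda^{F,\wun} = (\vv-2, 1, 0)$ (combined with any admissible integer part) to produce an explicit $\affine{\lambda} \in \radmuv^2$ with $\affine{\lambda} \ne \affine{\lambda}'$.

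The only real subtlety is the separation step: one needs coprimality of $\uu$ and $\vv$ to split the single equation $\lambda_0 = \lambda_0'$ into independent conditions on integer and fractional parts. Once that is in hand, the argument is pure counting, and the work is essentially bookkeeping about the shapes of $\apwlat{\uu-3}$ and $\apwlat{\vv-1}$.
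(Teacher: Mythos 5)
Your proposal is correct and takes essentially the same route as the paper: it combines the character formula of \cref{thm:rchar} with the identification $\conjmod{\arel{\lambda}{[\mu]}} \cong \arel{\lambda'}{[-\mu]}$, $\affine{\lambda}' = (\lambda_1+\fracuv,\lambda_0-\fracuv,\lambda_2) \in \radmuv^2$, to get $\ch{\arel{\lambda}{[\mu]}} = \ch{\arel{\lambda'}{[\mu]}}$, and then observes that $\lambda = \lambda'$ for all $\affine{\lambda} \in \radmuv^2$ forces $\uu=3$ and $\vv=2$. The only difference is presentational: you make explicit the coprimality argument splitting the condition into $\lambda^I_0 = \lambda^I_1$ and $\lambda^{F,\wun}_0 = \lambda^{F,\wun}_1 - 1$ and exhibit concrete counterexample weights, details the paper leaves implicit.
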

\noindent Of course, when $\kk$ is admissible with $\vv=1$ (so $\kk \in \NN$), the irreducible $\minmoduv$-modules are all \hw\ and their characters are well known to be linearly independent.

As the standard module approach to modularity and fusion assumes that the standard modules have linearly independent characters, it is therefore restricted by \cref{cor:only32} to $\themm$ (and the rational \wzw\ models $\minmod{\uu}{1}$, $\uu\ge3$).  The problem here is that the definition \eqref{eq:deffchar} of characters cannot distinguish $\arel{\lambda}{[\mu]}$ from $\arel{\lambda'}{[\mu]}$ because the eigenvalues of $h_0$ and $L_0$ do not even distinguish their top spaces $\frel{\lambda}{[\mu]}$ and $\frel{\lambda'}{[\mu]}$.  Indeed, they share the same $\slthree$-weights and eigenvalue of the quadratic Casimir.  However, the eigenvalue of the cubic Casimir of $\slthree$ may be different, a possibility to which \eqref{eq:deffchar} is blind.

In the absence of a better definition of character, the obvious fix is to lift characters to $1$-point functions on the torus \cite{ZhuMod96}.\footnote{We remark that this name, while quite standard, may be a little misleading.  It does not refer to a $1$-point correlation function of genus $1$ in an appropriate \cft, but rather to a chiral version where the trace is taken over a fixed module.  In other words, this concept generalises the definition \eqref{eq:deffchar} of a character by inserting some fixed zero mode (usually unexponentiated!) inside the trace.}  We shall not pursue this here, leaving it for a sequel, and here just note that this lift was recently worked out for the logarithmic \bp\ minimal models in \cite{FehMod21} by relating the relevant $1$-point functions to those of the rational $W_3$ minimal models, for which linear independence is generically guaranteed by a theorem of Zhu \cite{ZhuMod96}.  We expect that combining this method with the recent results of \cite{AraMod19,AraRat19,AdaRel21} will allow us to tackle general $\slthree$ minimal models and hope to return to this in the near future.

In what follows, we shall continue with the analysis of the $\slthree$ minimal model $\themm$ whose \rhwms\ have linearly independent characters.  Along with their spectral flows, these will furnish us with a collection of standard modules on which to test modularity and nonnegativity of fusion coefficients.

\subsection{$\themm$: a summary} \label{sec:32summary}

For convenience, we collect here some of the many general results obtained above, specialised to the case $\uu=3$ and $\vv=2$.  The level is thus $\kk = -\frac{3}{2}$ and the central charge is $\cc = -8$.  The minimal model $\themm$ has already been considered in several prior works including \cite{PerVer08,BeeInf15,AdaRea16,AraQua16}.  However, we believe that the modularity study reported below is new.

An admissible weight $\affine{\lambda} \in \theawts$ has $\affine{\lambda}^I = 0$ and $\affine{\lambda}^{F,\wun} \in \apwlat{1}$ or $\affine{\lambda}^{F,\weylaut{1}} = \fwt{1}$.  Similarly, $\affine{\lambda} \in \theswts^1 = \therwts^2$ implies that $\affine{\lambda}^I = 0$ and $\affine{\lambda}^{F,\wun} = \fwt{1}$.  Up to $\grp{D}_6$-twists, the isomorphism classes of the irreducible positive-energy weight $\themm$-modules are therefore represented by:
\begin{itemize}
	\item Four \hwms\ $\airr{0}$, $\airr{-3\fwt{1}/2}$, $\airr{-3\fwt{2}/2}$ and $\airr{-\wvec/2}$.  These were originally classified in \cite{PerVer08}.  Their ground states have conformal weight $0$, $-\frac{1}{2}$, $-\frac{1}{2}$ and $-\frac{1}{2}$, respectively.
	\item A single family of \srhwms\ $\asemi{-3\fwt{1}/2}{[\mu]}$, for all $[\mu] \in (-\frac{3}{2} \fwt{1} + \CC \sroot{1}) / \ZZ \sroot{1}$ except for $[\mu] = [-\frac{3}{2} \fwt{1}]$ and $[\mu] = [-\frac{1}{2} \wvec]$.  These were classified in \cite{AraWei16,KawRel19}.  Their ground states always have conformal weight $-\frac{1}{2}$.
	\item A single family of \rhwms\ $\arel{-3\fwt{1}/2}{[\mu]}$, for all $[\mu] \in (\dcsub / \rlat) \setminus \sing{-\frac{3}{2} \fwt{1}}$.  Here,
	\begin{equation}
		\sing{-\tfrac{3}{2} \fwt{1}} = [-\tfrac{3}{2} \fwt{1} + \CC \sroot{1}] \cup [-\tfrac{1}{2} \wvec + \CC \sroot{2}] \cup [-\tfrac{3}{2} \fwt{1} + \CC \sroot{3}].
	\end{equation}
	These were first constructed in \cite{AdaRea16} and shown to be a complete set in \cite{KawRel19}.  Their ground states also always have conformal weight $-\frac{1}{2}$.
\end{itemize}
By virtue of the fact that $\abs{\theswts} = \abs{\therwts} = 1$, we shall from here on drop the superscript label $\lambda$ on the $\asemi{\lambda}{[\mu]}$ and $\arel{\lambda}{[\mu]}$, for brevity.

The common multiplicity of the weights of the top space of the $\arel{}{[\mu]}$ is $\lambda^I_2+1 = 1$.  In fact, the same is true for the $\asemi{}{[\mu]}$ and $\airr{\mu}$.  This follows easily from the degenerations \eqref{eq:degR32} below, but also follows from the fact that the image of $\ccent{\slthree}$, the centraliser of the Cartan subalgebra, in the Zhu algebra of $\themm$ is commutative, itself a straightforward consequence of the results of \cite{PerVer08}.

Twisting an irreducible in the above list by a (nontrivial) $\grp{D}_6$-automorphism generally results in a new irreducible, meaning one not already listed above.  However, there are exceptions:
\begin{subequations} \label{eq:ident32}
	\begin{gather}
		\begin{gathered}
			\weylmod{}{\airr{0}} \cong \airr{0}, \quad
			\weylmod{1}{\airr{-3\fwt{2}/2}} \cong \airr{-3\fwt{2}/2}, \quad
			\weylmod{2}{\airr{-3\fwt{1}/2}} \cong \airr{-3\fwt{1}/2}, \\
			\weylmod{1}{\asemi{}{[\mu]}} \cong \asemi{}{[\weylmod{1}{\mu}]}, \quad
			\weylmod{}{\arel{}{[\mu]}} \cong \arel{}{[\weylmod{}{\mu}]}
		\end{gathered}
		\qquad \text{($\weylaut{} \in \weyl$)}, \label{eq:Wident32} \\
		\dynkmod{\airr{0}} \cong \airr{0}, \quad
		\dynkmod{\airr{-3\fwt{1}/2}} \cong \airr{-3\fwt{2}/2}, \quad
		\dynkmod{\airr{-\wvec/2}} \cong \airr{-\wvec/2}, \quad
		\dynkmod{\arel{}{[\mu]}} \cong \arel{}{[\dynkmod{\mu}]}. \label{eq:dident32}
	\end{gather}
	There are similar exceptions when the automorphism is a (nontrivial) spectral flow.  With the layout as defined in \cref{fig:sforbits}, these exceptions may be represented as follows:
	\begin{equation} \label{eq:sfident32}
		\begin{tikzpicture}[scale=2,<->,>=latex,baseline=(0)]
			\begin{scope}[shift={(0,0)}]
				\node (0) at (0:0) {$\airr{0}$};
				\node (-1) at (30:1) {$\airr{-3\fwt{1}/2}$};
				\node (-2) at (90:1) {$\airr{-3\fwt{2}/2}$};
				\node (1) at (150:1) {$\weylmod{1}{\airr{-3\fwt{1}/2}}$};
				\node (2) at (-30:1) {$\weylmod{2}{\airr{-3\fwt{2}/2}}$};
				\node (12) at (-150:1) {$\weylaut{1} \weylmod{2}{\airr{-3\fwt{2}/2}}$};
				\node (21) at (-90:1) {$\weylaut{2} \weylmod{1}{\airr{-3\fwt{1}/2}}$};
				\draw (0) -- (-1);
				\draw (0) -- (-2);
				\draw (0) -- (1);
				\draw (0) -- (2);
				\draw (0) -- (12);
				\draw (0) -- (21);
				\draw (12) -- (21);
				\draw (21) -- (2);
				\draw (2) -- (-1);
				\draw (-1) -- (-2);
				\draw (-2) -- (1);
				\draw (1) -- (12);
			\end{scope}
			\begin{scope}[shift={(2.25,0.75)}]
				\node (12) at (0:0) {$\weylaut{1} \weylmod{2}{\airr{-\wvec/2}}$};
				\node (0') at (30:1) {$\airr{-\wvec/2}$};
				\node (21) at (-30:1) {$\weylaut{2} \weylmod{1}{\airr{-\wvec/2}}$};
				\draw (12) -- (0');
				\draw (21) -- (0');
				\draw (12) -- (21);
			\end{scope}
			\begin{scope}[shift={(2.25,-1.25)}]
				\node (12) at (0:0) {$\weylmod{3}{\airr{-\wvec/2}}$};
				\node (0') at (30:1) {$\weylmod{2}{\airr{-\wvec/2}}$};
				\node (21) at (90:1) {$\weylmod{1}{\airr{-\wvec/2}}$};
				\draw (12) -- (0');
				\draw (21) -- (0');
				\draw (12) -- (21);
			\end{scope}
			\begin{scope}[shift={(5,0)}]
				\node (12) at (0,0.5) {$\asemi{}{[\mu]}$};
				\node (21) at (0,-0.5) {$\conjmod{\asemi{}{[-\mu-3\fwt{2}/2]}}$};
				\draw (12) -- (21);
			\end{scope}
		\end{tikzpicture}
	\end{equation}
\end{subequations}

For what follows, it will be important to know how the reducible (semi)relaxed \hw\ $\themm$-modules decompose.  The degenerations of the semirelaxed family take the form
\begin{equation} \label{eq:degS32}
	\asemi{}{[-3\fwt{1}/2]} \cong \airr{-3\fwt{1}/2} \oplus \weylmod{1}{\airr{-\wvec/2}} \quad \text{and} \quad
	\asemi{}{[-\wvec/2]} \cong \airr{-\wvec/2} \oplus \weylmod{1}{\airr{-3\fwt{1}/2}}.
\end{equation}
Similarly, the degenerations of the relaxed family for $[\mu]$ in $\sing{-\tfrac{3}{2} \fwt{1}}$ take the forms
\begin{equation} \label{eq:degR32}
	\arel{}{[\mu]} \cong
	\begin{cases*}
		\asemi{}{[\mu]} \oplus \conjmod{\asemi{}{[-\mu-\sroot{2}]}} & if $\mu \in -\frac{3}{2} \fwt{1} + \CC \sroot{1}$, \\
		\weylaut{1} \weylmod{2}{\asemi{}{[\weylaut{2} \cdot \weylmod{1}{\mu}]}} \oplus \weylaut{2} \dynkmod{\asemi{}{[\dynkaut \weylaut{2} \cdot \mu]}} & if $\mu \in -\frac{1}{2} \wvec + \CC \sroot{2}$, \\
		\weylmod{2}{\asemi{}{[\weylmod{2}{\mu}]}} \oplus \conjaut \weylmod{2}{\asemi{}{[\conjaut \weylaut{2} \cdot \mu]}} & if $\mu \in -\tfrac{3}{2} \fwt{1} + \CC \sroot{3}$.
	\end{cases*}
\end{equation}
We emphasise that one must always pick a representative $\mu$ from $[\mu] \in \dcsub / \rlat$ that satisfies the indicated condition.  For example, when $[\mu] = [-\frac{3}{2} \fwt{2}]$, we may take $\mu = -\frac{3}{2} \fwt{2}$ in the second degeneration of \eqref{eq:degR32} as $-\frac{3}{2} \fwt{2} = -\frac{1}{2} \wvec - \frac{1}{2} \sroot{2}$.  However, we should instead take $\mu = -\frac{3}{2} \fwt{2} + \sroot{2}$ in the third degeneration as $-\frac{3}{2} \fwt{2} + \sroot{2} = -\frac{3}{2} \fwt{1} + \frac{1}{2} \sroot{3}$.

Finally, for certain $[\mu] \in \dcsub / \rlat$, the relaxed modules further degenerate into \hwms:
\begin{equation} \label{eq:degRS32}
	\begin{aligned}
		\arel{}{[-3\fwt{1}/2]} &\cong \airr{-3\fwt{1}/2} \oplus \conjmod{\airr{-3\fwt{1}/2}} \oplus \weylmod{1}{\airr{-\wvec/2}} \oplus \conjaut \weylmod{1}{\airr{-\wvec/2}}, \\
		\arel{}{[-\wvec/2]} &\cong \airr{-\wvec/2} \oplus \conjmod{\airr{-\wvec/2}} \oplus \weylmod{1}{\airr{-3\fwt{1}/2}} \oplus \weylmod{2}{\airr{-3\fwt{2}/2}}, \\
		\arel{}{[-3\fwt{2}/2]} &\cong \airr{-3\fwt{2}/2} \oplus \conjmod{\airr{-3\fwt{2}/2}} \oplus \weylmod{2}{\airr{-\wvec/2}} \oplus \conjaut \weylmod{2}{\airr{-\wvec/2}}.
	\end{aligned}
\end{equation}
All of these degenerations are illustrated, for convenience, in \cref{fig:degenerations}.  In the language of the standard module formalism of \cite{CreLog13,RidVer14}, the \rhwms\ of \eqref{eq:degRS32} are \emph{atypical of degree $2$}, as are their direct summands.  The direct summands of the \rhwms\ of \eqref{eq:degR32}, excluding those of \eqref{eq:degRS32}, are then \emph{atypical of degree $1$} and the irreducible \rhwms\ are \emph{typical} (or atypical of degree $0$).

\begin{figure}
	\makebox[\textwidth][c]{
		\begin{tikzpicture}[scale=2,<->,>=latex]
			\begin{scope}[shift={(-1.5,3.2)}]
				\makeaxes{}{}
				\node[nom] at (0,1.3) {$\asemi{}{[-3\fwt{1}/2]}$};
				\thintriangle{0.45}{-60}{above:{$\scriptstyle -\frac{3}{2} \fwt{1}$}}{1.3}{$\airr{-3\fwt{1}/2}$}
				\thicktriangle{0.26}{30}{above right:{$\scriptstyle -\frac{3}{2} \fwt{1} + \sroot{1}$}}{0.8}{$\weylmod{1}{\airr{-\wvec/2}}$}
			\end{scope}
			\begin{scope}[shift={(1.5,3.2)}]
				\makeaxes{}{}
				\node[nom] at (0,1.3) {$\asemi{}{[-\wvec/2]}$};
				\thintriangle{0.45}{60}{above:{$\scriptstyle -\frac{1}{2} \wvec + \sroot{1}$}}{1.4}{$\weylmod{1}{\airr{-3\fwt{1}/2}}$}
				\thicktriangle{0.26}{-30}{above left:{$\scriptstyle -\frac{1}{2} \wvec$}}{0.8}{$\airr{-\wvec/2}$}
			\end{scope}
			\begin{scope}[shift={(0,0)}]
				\makeaxes{}{}
				\node[nom] at (0,1.6) {$\arel{}{[\mu]}$ ($\mu \in -\frac{3}{2} \fwt{1} + \CC \sroot{1}$)};
				\node[wt, label=above:$\scriptstyle -\frac{3}{2} \fwt{1}$] at (30:-0.45) {};
				\brick{0.225}{0}{below:{$\scriptstyle \mu$}}{0.6}
				\node at (-0.2,-1.2) {$\asemi{}{[\mu]}$};
				\brick{0.225}{180}{above left:$\scriptstyle \mu+\sroot{2}$}{0.53}
				\node at (0.4,1.17) {$\conjmod{\asemi{}{[-\mu-\sroot{2}]}}$};
			\end{scope}
			\begin{scope}[shift={(-3,0)}]
				\makeaxes{}{}
				\node[nom] at (0,1.6) {$\arel{}{[\mu]}$ ($\mu \in -\frac{1}{2} \wvec + \CC \sroot{2}$)};
				\node[wt, label=below left:$\scriptstyle -\frac{1}{2} \wvec$] at (60:-0.26) {};
				\brick{0.225}{-60}{right:{$\scriptstyle \mu$}}{0.75}
				\node at (-1.1,-1.1) {$\weylaut{2} \dynkmod{\asemi{}{[\dynkaut \weylaut{2} \cdot \mu]}}$};
				\brick{0.225}{120}{above right:$\scriptstyle \mu+\sroot{3}$}{0.38}
				\node at (1.25,1.15) {$\weylaut{1} \weylmod{2}{\asemi{}{[\weylaut{2} \cdot \weylmod{1}{\mu}]}}$};
			\end{scope}
			\begin{scope}[shift={(3,0)}]
				\makeaxes{}{}
				\node[nom] at (0,1.6) {$\arel{}{[\mu]}$ ($\mu \in -\frac{3}{2} \fwt{1} + \CC \sroot{3}$)};
				\node[wt, label=left:$\scriptstyle -\frac{3}{2} \fwt{1}$] at (30:-0.45) {};
				\brick{0.225}{-120}{left:$\scriptstyle \mu$}{0.55}
				\node at (-1.05,1.15) {$\weylmod{2}{\asemi{}{[\weylmod{2}{\mu}]}}$};
				\brick{0.225}{60}{right:$\scriptstyle \mu+\sroot{1}$}{0.58}
				\node at (1.1,-1.1) {$\conjaut \weylmod{2}{\asemi{}{[\conjaut \weylaut{2} \cdot \mu]}}$};
			\end{scope}
			\begin{scope}[shift={(2.5,-3.2)}]
				\makeaxes{}{}
				\node[nom] at (0,1.4) {$\arel{}{[-\wvec/2]}$};
				\thintriangle{0.45}{60}{above right:$\scriptstyle -\frac{1}{2} \wvec + \sroot{1}$}{1.4}{$\weylmod{1}{\airr{-3\fwt{1}/2}}$}
				\thicktriangle{0.26}{-30}{below left:{$\scriptstyle -\frac{1}{2} \wvec$}}{0.7}{$\airr{-\wvec/2}$}
				\thintriangle{0.45}{-120}{below left:$\scriptstyle -\frac{3}{2} \fwt{1}$}{1.4}{$\weylmod{2}{\airr{-3\fwt{2}/2}}$}
				\thicktriangle{0.26}{150}{above right:{$\scriptstyle -\frac{1}{2} \wvec$}}{0.8}{$\conjmod{\airr{-\wvec/2}}$}
			\end{scope}
			\begin{scope}[shift={(-2.5,-3.2)}]
				\makeaxes{}{}
				\node[nom] at (0,1.4) {$\arel{}{[-3\fwt{1}/2]}$};
				\thintriangle{0.45}{-60}{{[shift={(-75:0.8)}]:$\scriptstyle -\frac{3}{2} \fwt{1}$}}{1.2}{$\airr{-3\fwt{1}/2}$}
				\thicktriangle{0.26}{30}{above right:{$\scriptstyle -\frac{3}{2} \fwt{1} + \sroot{1}$}}{0.8}{$\weylmod{1}{\airr{-\wvec/2}}$}
				\thintriangle{0.45}{120}{{[shift={(175:0.15)}]:$\scriptstyle \frac{3}{2} \fwt{1}$}}{1.35}{$\conjmod{\airr{-3\fwt{1}/2}}$}
				\thicktriangle{0.26}{-150}{below left:{$\scriptstyle -\frac{3}{2} \fwt{1} + \sroot{1}$}}{0.85}{$\conjaut \weylmod{1}{\airr{-\wvec/2}}$}
			\end{scope}
			\begin{scope}[shift={(0,-4.8)}]
				\makeaxes{}{}
				\node[nom] at (0,2) {$\arel{}{[-3\fwt{2}/2]}$};
				\thintriangle{0.45}{0}{left:$\scriptstyle -\frac{3}{2} \fwt{2}$}{1.1}{$\airr{-3\fwt{2}/2}$}
				\thicktriangle{0.26}{-90}{left:{$\scriptstyle -\frac{3}{2} \fwt{2} + \sroot{2}$}}{1.1}{$\weylmod{2}{\airr{-\wvec/2}}$}
				\thintriangle{0.45}{180}{right:$\scriptstyle \frac{3}{2} \fwt{2}$}{1.1}{$\conjmod{\airr{-3\fwt{2}/2}}$}
				\thicktriangle{0.26}{90}{right:{$\scriptstyle -\frac{3}{2} \fwt{2} + \sroot{3}$}}{1.1}{$\conjaut \weylmod{2}{\airr{-\wvec/2}}$}
			\end{scope}
		\end{tikzpicture}
	}
\caption{The degenerations \eqref{eq:degS32} of the semirelaxed $\themm$-modules (top), the generic degenerations \eqref{eq:degR32} of the relaxed $\themm$-modules $\arel{}{[\mu]}$, $\mu \in \sing{-\frac{3}{2} \fwt{1}}$, (middle) and the special degenerations \eqref{eq:degRS32} of the relaxed $\themm$-modules (bottom).  In each case, the degeneration is illustrated by the (convex hulls of the) weights of the direct summands and the black circles indicate representative weights of the summands.} \label{fig:degenerations}
\end{figure}

\subsection{Standard characters and modularity} \label{sec:32modular}

The minimal \qhr\ of $\themm$ is the trivial \bp\ minimal model $\bpminmod{3}{2}$ of central charge $0$ \cite{AraRep05}.  It then follows from \cite{AraRep11} that
\begin{equation}
	H^0_{\textup{min.}}(\airr{-3\fwt{1}/2}) \cong \CC, \quad \text{hence} \quad \ch{H^0_{\textup{min.}}(\airr{-3\fwt{1}/2})}(\qq) = 1.
\end{equation}
By \cref{thm:rchar}, the characters of the \rhw\ $\themm$-modules are thus
\begin{equation} \label{eq:rchar32}
	\fch{\arel{}{[\mu]}}{\yy}{\zz}{\qq}
	= \frac{\yy^{-3/2}}{\eta(\qq)^4} \sum_{\nu \in [\mu]} \zz^{\nu}
	= \frac{\ee^{2 \pi \ii \brac[\big]{-3 \theta / 2 + \pair{\mu}{\zeta}}}}{\eta(\tau)^4} \sum_{\sroot{} \in \rlat} \ee^{2 \pi \ii \pair{\sroot{}}{\zeta}}, \quad [\mu] \in \dcsub / \rlat.
\end{equation}
We can manipulate the sum in this character formula by decomposing $\sroot{}$ as a linear combination of simple roots:
\begin{equation}
	\sum_{\sroot{} \in \rlat} \ee^{2 \pi \ii \pair{\sroot{}}{\zeta}}
	= \sum_{m_1, m_2 \in \ZZ} \ee^{2 \pi \ii \brac[\big]{\pair{\sroot{1}}{\zeta} m_1 + \pair{\sroot{2}}{\zeta} m_2}}
	= \prod_{i=1}^2 \sum_{m_i \in \ZZ} \ee^{2 \pi \ii \pair{\sroot{i}}{\zeta} m_i}
	= \prod_{i=1}^2 \sum_{n_1 \in \ZZ} \delta \brac[\big]{\pair{\sroot{i}}{\zeta} - n_i}.
\end{equation}
Defining the Dirac delta function on $\csub$ by
\begin{equation}
	\delta(\zeta) = \prod_{i=1}^2 \delta \brac[\big]{\pair{\sroot{i}}{\zeta}}, \quad \zeta \in \csub,
\end{equation}
the sum takes the compact form
\begin{equation} \label{eq:expdelta}
	\sum_{\sroot{} \in \rlat} \ee^{2 \pi \ii \pair{\sroot{}}{\zeta}}
	= \sum_{n_1, n_2 \in \ZZ} \prod_{i=1}^2 \delta \brac[\big]{\pair{\sroot{i}}{\zeta - n_i \fcwt{i}}}
	= \sum_{\fcwt{} \in \cwlat} \delta(\zeta - \fcwt{}),
\end{equation}
which we recognise as a Dirac comb supported on the coweight lattice.

We remark that this derivation immediately generalises to give
\begin{equation} \label{eq:expdelta'}
	\sum_{\sroot{} \in \grp{L}} \ee^{2 \pi \ii \pair{\sroot{}}{\zeta}}
	= \sum_{\gamma \in \grp{L}^\vee} \delta(\zeta - \gamma),
\end{equation}
where $\grp{L}$ is any lattice, $\zeta \in \grp{L} \otimes_{\ZZ} \CC$ and $\grp{L}^\vee$ its integer dual.  In particular, the characterisation of lattice Dirac combs derived in \eqref{eq:expdelta} continues to hold when we exchange the roles of $\rlat$ and $\cwlat$.

The spectral flows of the \rhw\ $\themm$-modules $\arel{}{[\mu]}$, with $[\mu] \in \rdcsub / \rlat$, are our candidate standard modules, where $\rdcsub$ denotes the subset of $\dcsub$ consisting of \emph{real} weights.  Substituting this back into \eqref{eq:rchar32} and applying \eqref{eq:autchar}, their characters are as follows.
\begin{proposition} \label{prop:stchar}
  The character of the spectrally flowed \rhw\ $\themm$-module $\sfmod{\fcwt{}}{\arel{}{[\mu]}}$, for any $\fcwt{} \in \cwlat$ and $[\mu] \in \dcsub / \rlat$, is
  \begin{equation} \label{eq:rchar32sf}
    \mch{\sfmod{\fcwt{}}{\arel{}{[\mu]}}}{\theta}{\zeta}{\tau}
    = \frac{\ee^{-3 \pi \ii \theta} \ee^{-3 \pi \ii \killing{\fcwt{}}{\zeta + \tau \fcwt{} / 2}} \ee^{2 \pi \ii \pair{\mu}{\zeta + \tau \fcwt{}}}}{\eta(\tau)^4} \sum_{\fcwt{}' \in \cwlat} \delta(\zeta + \tau \fcwt{} - \fcwt{}').
  \end{equation}
\end{proposition}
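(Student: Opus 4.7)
The statement is essentially a bookkeeping exercise combining two results already in hand: the explicit character formula \eqref{eq:rchar32} for the unflowed relaxed modules $\arel{}{[\mu]}$ and the general transformation rule \eqref{eq:autchar} describing how characters behave under spectral flow. The plan is to carry out the substitution directly, using the lattice-sum-to-Dirac-comb identity \eqref{eq:expdelta} to repackage the resulting series.

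First, I would rewrite \eqref{eq:rchar32} in the $(\theta,\zeta,\tau)$ variables. Choosing a representative $\mu \in \dcsub$ of the coset $[\mu]$, the sum over $\nu \in [\mu] = \mu + \rlat$ splits as $\ee^{2\pi\ii\pair{\mu}{\zeta}}$ times $\sum_{\alpha \in \rlat} \ee^{2\pi\ii\pair{\alpha}{\zeta}}$. The prefactor $\yy^{-3/2}$ becomes $\ee^{-3\pi\ii\theta}$, yielding
\begin{equation*}
	\mch{\arel{}{[\mu]}}{\theta}{\zeta}{\tau} = \frac{\ee^{-3\pi\ii\theta}\, \ee^{2\pi\ii\pair{\mu}{\zeta}}}{\eta(\tau)^4} \sum_{\alpha \in \rlat} \ee^{2\pi\ii\pair{\alpha}{\zeta}}.
\end{equation*}
Since the coroot and coweight lattices are integer duals of the root and weight lattices, respectively, identity \eqref{eq:expdelta} (or its generalisation \eqref{eq:expdelta'}) converts the sum over $\rlat$ into the Dirac comb $\sum_{\fcwt{}' \in \cwlat} \delta(\zeta - \fcwt{}')$.

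Second, I would apply the spectral flow formula from \eqref{eq:autchar}: $\mch{\sfmod{\fcwt{}}{\arel{}{[\mu]}}}{\theta}{\zeta}{\tau}$ is obtained by substituting $\theta \mapsto \theta + \killing{\zeta}{\fcwt{}} + \tfrac{1}{2}\killing{\fcwt{}}{\fcwt{}}\tau$ and $\zeta \mapsto \zeta + \tau\fcwt{}$. The $\eta(\tau)^4$ denominator is unaffected; the prefactor $\ee^{-3\pi\ii\theta}$ picks up the factor $\ee^{-3\pi\ii \killing{\fcwt{}}{\zeta + \tau\fcwt{}/2}}$; and the exponential $\ee^{2\pi\ii\pair{\mu}{\zeta}}$ becomes $\ee^{2\pi\ii\pair{\mu}{\zeta + \tau\fcwt{}}}$. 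The Dirac comb transforms by the shift in $\zeta$, producing $\sum_{\fcwt{}' \in \cwlat} \delta(\zeta + \tau\fcwt{} - \fcwt{}')$. Assembling these pieces reproduces \eqref{eq:rchar32sf}.

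There is no real obstacle: \cref{thm:rchar} supplies the nontrivial input (the $\arel{}{[\mu]}$ character, whose derivation rests on the ``$-$-type'' minimal reduction calculation already cited), and the spectral flow transformation is just the general formula \eqref{eq:autchar}. The only subtlety worth flagging is that the $\zeta$-dependent part of the character must genuinely be read as a distribution supported on $\cwlat$ in order for the substitution $\zeta \mapsto \zeta + \tau\fcwt{}$ to make sense; this is precisely the standard-module-formalism viewpoint advocated in \cref{sec:32linindep}, and once adopted the argument is immediate.
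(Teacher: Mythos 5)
Your derivation is correct and coincides with the paper's own: the paper likewise rewrites the relaxed character \eqref{eq:rchar32} as a Dirac comb on $\cwlat$ via \eqref{eq:expdelta} and then applies the spectral flow substitution \eqref{eq:autchar}, exactly as you do (this derivation appears in the text immediately preceding the proposition rather than in a formal proof). Your remark about reading the $\zeta$-dependence distributionally is also the viewpoint the paper adopts.
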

\noindent To confirm our choice of standard modules, we must show that the span of their characters carries a representation of the modular group $\mgrp = \ang{\mods, \modt \st \mods^4=\wun\ \text{and}\ (\mods \modt)^3 = \mods^2}$ and that their characters form a topological basis for the space of all characters of modules of $\awcat{3,2}$.

The modularity is the subject of our next result.  First, we propose the following action of $\mods$ and $\modt$ on the coordinates $\modcoords{\theta}{\zeta}{\tau} \in \CC \times \csub \times \HH$:
\begin{equation}
	\begin{aligned}
		\mods \colon \modcoords{\theta}{\zeta}{\tau}
		&\mapsto \modcoords{\theta - \frac{\killing{\zeta}{\zeta}}{2 \tau} - \frac{2 \arg(\tau) - \arg(-1)}{3 \pi}}{\frac{\zeta}{\tau}}{-\frac{1}{\tau}}, \\
	  \modt \colon \modcoords{\theta}{\zeta}{\tau}
	  &\mapsto \modcoords{\theta - \frac{\arg(-1)}{9 \pi}}{\zeta}{\tau+1}.
	\end{aligned}
\end{equation}
Here, $\arg$ denotes any choice of complex argument, for example the principal one.  It is easy to check, using $\arg(-\frac{1}{\tau}) = \arg(-1) - \arg(\tau)$, that this does indeed satisfy the defining relations of $\mgrp$.\footnote{The reader will no doubt recognise the action of $\mods$ and $\modt$ on $\theta$ as a somewhat strange-looking generalisation of the usual formulae familiar from rational models.  The terms involving complex arguments seem to be necessary to deal with the unusual automorphy factor that results from transforming Dirac combs.}
\begin{theorem} \label{thm:stcharmod}
	The modular S-transform of the standard $\themm$-module character \eqref{eq:rchar32sf} is given by
	\begin{subequations} \label{eq:stcharS}
		\begin{equation} \label{eq:stcharStrans}
	    \mods \set*{\ch{\sfmod{\fcwt{}}{\arel{}{[\mu]}}}}
	    = \sum_{\fcwt{}' \in \cwlat} \int_{\rdcsub / \rlat} \smat{\fcwt{}, \fcwt{}'}{[\mu], [\mu']} \ch{\sfmod{\fcwt{}'}{\arel{}{[\mu']}}} \, \dd [\mu'],
	  \end{equation}
	  where $\fcwt{} \in \cwlat$, $[\mu] \in \rdcsub / \rlat$ and
	  \begin{equation} \label{eq:stcharSmat}
	    \smat{\fcwt{}, \fcwt{}'}{[\mu], [\mu']} = \ee^{2 \pi \ii \brac[\big]{3 \killing{\fcwt{}}{\fcwt{}'} / 2 - \pair{\mu}{\fcwt{}'} - \pair{\mu'}{\fcwt{}}}}.
	  \end{equation}
	\end{subequations}
\end{theorem}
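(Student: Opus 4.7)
My plan is to compute both sides of \eqref{eq:stcharStrans} directly and match them. I start with the right-hand side: substituting \eqref{eq:rchar32sf} for $\ch{\sfmod{\fcwt{}'}{\arel{}{[\mu']}}}$ and collecting the $\mu'$-dependent factors yields the integrand $\ee^{2\pi\ii\pair{\mu'}{\zeta+\tau\fcwt{}'-\fcwt{}}}$. On the support of the Dirac comb $\sum_{\fcwt{}''\in\cwlat}\delta(\zeta+\tau\fcwt{}'-\fcwt{}'')$ inside the character, $\zeta+\tau\fcwt{}'-\fcwt{}$ lies in $\cwlat$, so this integrand is a character of the torus $\rdcsub/\rlat$ and orthogonality collapses the $[\mu']$-integral to $\operatorname{vol}(\rdcsub/\rlat)$ times a Kronecker selection forcing $\fcwt{}''=\fcwt{}$. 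The right-hand side is then an analytic prefactor multiplying $\sum_{\fcwt{}'\in\cwlat}\delta(\zeta+\tau\fcwt{}'-\fcwt{})$; this prefactor can be evaluated on the support of the delta by substituting $\zeta=\fcwt{}-\tau\fcwt{}'$.

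For the left-hand side, I apply $\mods$ directly to \eqref{eq:rchar32sf}. Under $\mods$, the argument $\zeta+\tau\fcwt{}$ becomes $(\zeta-\fcwt{})/\tau$, and the prefactor acquires phases $\ee^{3\pi\ii\killing{\zeta}{\zeta}/(2\tau)}$ (from the $\theta$-shift), $\ee^{3\pi\ii\killing{\fcwt{}}{\fcwt{}}/(2\tau)}\ee^{-3\pi\ii\killing{\fcwt{}}{\zeta}/\tau}$ (from the $\fcwt{}$-Gaussian), and $\ee^{2\pi\ii\pair{\mu}{\zeta-\fcwt{}}/\tau}$. Using $\eta(-1/\tau)^4 = -\tau^2\eta(\tau)^4$ together with the $\arg$-dependent phase built into the $\theta$-shift of $\mods$, and the real two-dimensional Jacobian $\sum_{\fcwt{}'}\delta((\zeta-\fcwt{})/\tau-\fcwt{}') = \tau^2\sum_{\fcwt{}'}\delta(\zeta-\fcwt{}-\tau\fcwt{}')$ extended by analytic continuation, the $\tau^2$-factors cancel and I obtain a distribution supported on $\zeta\in\fcwt{}+\tau\cwlat$. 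Evaluating the remaining $\tau^{-1}$-Gaussians on this support via $\zeta=\fcwt{}+\tau\fcwt{}'$ converts them into $\tau$-independent phases in $\killing{\fcwt{}}{\fcwt{}'}$, $\killing{\fcwt{}'}{\fcwt{}'}$ and $\pair{\mu}{\fcwt{}'}$, which combine with the terms inherited from the original character into precisely the kernel $\ee^{2\pi\ii(3\killing{\fcwt{}}{\fcwt{}'}/2-\pair{\mu}{\fcwt{}'}-\pair{\mu'}{\fcwt{}})}$ times the standard character of $\sfmod{\fcwt{}'}{\arel{}{[\mu']}}$, matching the right-hand side (after the coweight relabelling $\fcwt{}'\mapsto-\fcwt{}'$).

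The main obstacle is phase bookkeeping. Because $\kk+3=\tfrac{3}{2}$ is fractional, the exponents carry factors of $3\pi\ii$ (not $2\pi\ii$) and $\killing{\fcwt{}}{\fcwt{}'}\in\tfrac{1}{3}\ZZ$, so both sides are products of sixth roots of unity that must align exactly. The custom $\theta$-shift in the $\mods$-action was built in specifically to cancel the branch of $\sqrt{-\ii\tau}$ appearing in $\eta(-1/\tau)$ together with the sign from $(-\ii)^2=-1$, so that the resulting modular multiplier is a pure phase. Confirming this cancellation, together with the analytic continuation of the $\tau^2$-Jacobian used to rescale the Dirac comb, is the technically delicate step; once it is in place, the remaining identification is routine.
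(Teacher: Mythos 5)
Your proposal is correct and follows essentially the same route as the paper's proof: simplify both sides --- the right-hand side via torus orthogonality applied on the support of the Dirac comb (which forces $\fcwt{}''=\fcwt{}$), the left-hand side by applying $\mods$ directly using $\eta(-1/\tau)^4=-\tau^2\eta(\tau)^4$, the $\arg$-dependent phase in the $\theta$-shift and the rescaling of the comb --- and then match the two on the delta support after the harmless relabelling $\fcwt{}'\mapsto-\fcwt{}'$. The only cosmetic slip is describing the surviving $\ee^{3\pi\ii\tau\killing{\fcwt{}'}{\fcwt{}'}/2}$ factor as a $\tau$-independent phase; it is the Gaussian that recombines with the comb to reconstitute the standard character, exactly as in the paper's intermediate formula.
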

\begin{proof}
  We begin by simplifying the \lhs{} of \eqref{eq:stcharStrans}, using $\mods \set*{\eta(\tau)} = \sqrt{-\ii \tau} \eta(\tau)$ and $\delta(\zeta / \tau) = \abs{\tau}^2 \delta(\zeta)$:\footnote{This second identity is well known for delta functions with real arguments.  Here, as in many other applications of the standard module formalism, we assume that it may be extended to complex arguments.  We expect that this formula can be established rigorously by finding the correct space of test functions to pair with and hope to pursue this in future work.}
	\begin{align}
		\mods \set*{\ch{\sfmod{\fcwt{}}{\arel{}{[\mu]}}}}
		&= \ee^{-3 \pi \ii \brac[\big]{\theta - \killing{\zeta}{\zeta} / 2 \tau}} \frac{\ee^{\ii \brac[\big]{2 \arg(\tau) - \arg(-1)}}}{-\tau^2 \eta(\tau)^4} \ee^{-3 \pi \ii \killing{\fcwt{}}{\zeta - \fcwt{}/2} / \tau} \ee^{2 \pi \ii \pair{\mu}{\zeta - \fcwt{}} / \tau} \sum_{\fcwt{}' \in \cwlat} \delta \brac*{\frac{\zeta - \fcwt{} - \fcwt{}' \tau}{\tau}} \notag \\
		&= \frac{\ee^{-3 \pi \ii \theta} \abs{\tau}^2 \ee^{\ii \brac[\big]{2 \arg(\tau) - \arg(-1)}}}{-\tau^2 \eta(\tau)^4} \sum_{\fcwt{}' \in \cwlat} \ee^{3 \pi \ii \brac[\big]{\killing{\zeta}{\zeta/2} - \killing{\fcwt{}}{\zeta - \fcwt{}/2}} / \tau} \ee^{2 \pi \ii \pair{\mu}{\zeta - \fcwt{}} / \tau} \delta(\zeta + \fcwt{}' \tau - \fcwt{}).
	\end{align}
	Using the delta function to replace $\zeta$ everywhere in the exponentials by $\fcwt{} - \fcwt{}' \tau$, we arrive at
	\begin{equation} \label{eq:theanswer}
		\mods \set*{\ch{\sfmod{\fcwt{}}{\arel{}{[\mu]}}}} = \frac{\ee^{-3 \pi \ii \theta}}{\eta(\tau)^4} \sum_{\fcwt{}' \in \cwlat} \ee^{3 \pi \ii \killing{\fcwt{}'}{\fcwt{}'} \tau/2} \ee^{-2 \pi \ii \pair{\mu}{\fcwt{}'}} \delta(\zeta + \fcwt{}' \tau - \fcwt{}).
	\end{equation}
	Given the easily derived identity
	\begin{equation}
		\int_{\rdcsub / \rlat} \ee^{2 \pi \ii \pair{\mu}{\fcwt{}}} \, \dd [\mu] = \delta_{\fcwt{},0}, \quad \fcwt{} \in \cwlat,
	\end{equation}
	the \rhs{} of \eqref{eq:stcharStrans} likewise simplifies:
	\begin{align}
		\sum_{\fcwt{}' \in \cwlat} &\int_{\rdcsub / \rlat} \smat{\fcwt{}, \fcwt{}'}{[\mu], [\mu']} \ch{\sfmod{\fcwt{}'}{\arel{}{[\mu']}}} \, \dd [\mu'] \notag \\
		&= \sum_{\fcwt{}' \in \cwlat} \ee^{2 \pi \ii \brac[\big]{3 \killing{\fcwt{}}{\fcwt{}'} / 2 - \pair{\mu}{\fcwt{}'}}} \frac{\ee^{-3 \pi \ii \theta} \ee^{-3 \pi \ii \killing{\fcwt{}'}{\zeta + \fcwt{}' \tau/2}}}{\eta(\tau)^4}
		\sum_{\fcwt{}'' \in \cwlat} \int_{\rdcsub / \rlat} \ee^{2 \pi \ii \pair{\mu'}{\zeta + \fcwt{}' \tau - \fcwt{}}} \, \dd [\mu'] \, \delta(\zeta + \fcwt{}' \tau - \fcwt{}'') \notag \\
		&= \frac{\ee^{-3 \pi \ii \theta}}{\eta(\tau)^4} \sum_{\fcwt{}' \in \cwlat} \ee^{3 \pi \ii \killing{\fcwt{}}{\fcwt{}'}} \ee^{-2 \pi \ii \pair{\mu}{\fcwt{}'}} \ee^{-3 \pi \ii \killing{\fcwt{}'}{\zeta + \fcwt{}' \tau/2}} \delta(\zeta + \fcwt{}' \tau - \fcwt{}).
	\end{align}
	Replacing $\zeta$ in the exponential by $\fcwt{} - \fcwt{}' \tau$, this reproduces \eqref{eq:theanswer}.
\end{proof}
\noindent The corresponding explicit formula for the T-transform of the standard characters is easy to derive.  As we shall not need it, we omit this result.

Note that the ``S-matrix'' \eqref{eq:stcharSmat} is manifestly symmetric under exchanging primed and unprimed labels.  It is also unitary:
\begin{equation}
	\sum_{\fcwt{}'' \in \cwlat} \int_{\rdcsub / \rlat} \smat{\fcwt{}, \fcwt{}''}{[\mu], [\mu'']} \overline{\smat{\fcwt{}'', \fcwt{}'}{[\mu''], [\mu']}} \, \dd [\mu''] = \delta_{\fcwt{}, \fcwt{}'} \delta([\mu] - [\mu']).
\end{equation}
Finally, its square picks out the conjugation automorphism $\conjaut$ at the level of the standard module labels:
\begin{equation}
	\sum_{\fcwt{}'' \in \cwlat} \int_{\rdcsub / \rlat} \smat{\fcwt{}, \fcwt{}''}{[\mu], [\mu'']} \smat{\fcwt{}'', \fcwt{}'}{[\mu''], [\mu']} \, \dd [\mu''] = \delta_{\fcwt{}, -\fcwt{}'} \delta([\mu] + [\mu']).
\end{equation}
The properties generalise those from the familiar case of rational (and $C_2$-cofinite) \voas.  This is, in a sense, the hallmark of the standard module formalism.  In particular, it suggests that Verlinde computations will give meaningful answers for the (Grothendieck) fusion coefficients.

\subsection{Modularity of atypical characters} \label{sec:32atypical}

Our next task is to demonstrate that the characters of the remaining irreducible $\themm$-modules in $\awcat{3,2}$ may be expressed as infinite-linear combinations of standard characters.  This is achieved by constructing resolutions for the irreducibles in terms of the standard modules and applying the \ep\ principle to deduce the required character formulae and their modular transformations.

As we have chosen the standard modules to be completely reducible, we may actually skip the resolutions entirely and instead work directly with character formulae deduced from the degeneration formulae \eqref{eq:degS32}--\eqref{eq:degRS32}.  For example, combining the spectral flow identifications \eqref{eq:sfident32} with the degenerations \eqref{eq:degR32} for $\mu \in -\frac{3}{2} \fwt{1} + \CC \sroot{1}$ gives
\begin{equation} \label{eq:sfdegR32}
	\arel{}{[\mu]} \cong \asemi{}{[\mu]} \oplus \conjmod{\asemi{}{[-\mu-\sroot{2}]}}
	\cong \asemi{}{[\mu]} \oplus \sfmod{-\fcwt{2}}{\asemi{}{[\mu+\sroot{2}-3\fwt{2}/2]}}
	= \asemi{}{[\mu]} \oplus \sfmod{-\fcwt{2}}{\asemi{}{[\mu-\sroot{1}/2]}}.
\end{equation}
The corresponding character identity is thus
\begin{equation}
	\ch{\asemi{}{[\mu]}} = \ch{\arel{}{[\mu]}} - \ch{\sfmod{-\fcwt{2}}{\asemi{}{[\mu-\sroot{1}/2]}}}.
\end{equation}
Of course, we may replace $\mu$ by $\mu - \frac{1}{2} \sroot{1}$ in \eqref{eq:sfdegR32} and then apply the exact functor $\sfaut{-\fcwt{2}}$ to obtain the character identity
\begin{equation}
	\ch{\sfmod{-\fcwt{2}}{\asemi{}{[\mu-\sroot{1}/2]}}} = \ch{\sfmod{-\fcwt{2}}{\arel{}{[\mu-\sroot{1}/2]}}} - \ch{\sfmod{-2\fcwt{2}}{\asemi{}{[\mu]}}}.
\end{equation}
By substituting back and iterating this process, we arrive at the following formula expressing the semirelaxed characters as an infinite-linear combination of the standard ones.
\begin{lemma} \label{lem:atypchS}
	The character of the \srhw\ $\themm$-module $\asemi{}{[\mu]}$, $[\mu] \in -\frac{3}{2} \fwt{1} + \CC \sroot{1}$, satisfies
	\begin{equation} \label{eq:atypchS}
		\ch{\asemi{}{[\mu]}} = \sum_{n=0}^{\infty} \brac*{\ch{\sfmod{-2n\fcwt{2}}{\arel{}{[\mu]}}} - \ch{\sfmod{-(2n+1)\fcwt{2}}{\arel{}{[\mu-\sroot{1}/2]}}}}.
	\end{equation}
\end{lemma}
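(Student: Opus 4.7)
My plan is to iterate the character identity derived from \eqref{eq:sfdegR32}, producing a telescoping sum whose remainder vanishes in the appropriate formal sense. The decomposition \eqref{eq:sfdegR32} applies to any representative $\nu \in -\tfrac{3}{2}\fwt{1} + \CC\sroot{1}$; specialising to $\nu = \mu$ and to $\nu = \mu - \tfrac{1}{2}\sroot{1}$, and using that $[\mu - \sroot{1}] = [\mu]$ in $(-\tfrac{3}{2}\fwt{1} + \CC\sroot{1})/\ZZ\sroot{1}$, yields two character identities relating $\ch{\asemi{}{[\mu]}}$ and $\ch{\asemi{}{[\mu - \sroot{1}/2]}}$ to standard characters and to each other. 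Applying the exact functors $\sfaut{-2n\fcwt{2}}$ and $\sfaut{-(2n+1)\fcwt{2}}$ respectively to these two identities and composing gives the recursion
\begin{equation*}
	\ch{\sfmod{-2n\fcwt{2}}{\asemi{}{[\mu]}}} - \ch{\sfmod{-(2n+2)\fcwt{2}}{\asemi{}{[\mu]}}}
	= \ch{\sfmod{-2n\fcwt{2}}{\arel{}{[\mu]}}} - \ch{\sfmod{-(2n+1)\fcwt{2}}{\arel{}{[\mu - \sroot{1}/2]}}}.
\end{equation*}
Summing over $n = 0, 1, \dotsc, N-1$ telescopes on the left to yield
\begin{equation*}
	\ch{\asemi{}{[\mu]}} = \sum_{n=0}^{N-1} \brac*{\ch{\sfmod{-2n\fcwt{2}}{\arel{}{[\mu]}}} - \ch{\sfmod{-(2n+1)\fcwt{2}}{\arel{}{[\mu - \sroot{1}/2]}}}} + \ch{\sfmod{-2N\fcwt{2}}{\asemi{}{[\mu]}}}.
\end{equation*}

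It then remains to argue that the tail $\ch{\sfmod{-2N\fcwt{2}}{\asemi{}{[\mu]}}}$ vanishes as $N \to \infty$ coefficient-by-coefficient in $\zz^{\eta}\qq^{h}$, and that the infinite series on the right-hand side of \eqref{eq:atypchS} itself converges in the same sense. Applying \eqref{eq:sfwts} with $\kk = -\tfrac{3}{2}$ and the identification $\fcwt{2}^* = \fwt{2}$ (computable from \eqref{eq:Killing}), a weight vector of weight $\eta$ and conformal weight $h$ in $\sfmod{-2N\fcwt{2}}{\asemi{}{[\mu]}}$ lifts to one in $\asemi{}{[\mu]}$ of weight $\eta - 3N\fwt{2}$ and conformal weight $h + \tfrac{2N}{3}(\eta_1 + 2\eta_2) - 2N^2$. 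Since $\asemi{}{[\mu]}$ is positive-energy, its conformal weights are bounded below, and for any fixed pair $(\eta, h)$ the quadratic term $-2N^2$ eventually forces the lifted conformal weight below this bound, so the tail coefficient is zero for all large $N$. An analogous direct computation on the standard character formula \eqref{eq:rchar32sf} shows that, for each $(\eta, h)$, only finitely many $n$ contribute to the coefficient of $\zz^\eta\qq^h$ on the right of \eqref{eq:atypchS}.

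The main obstacle is precisely this convergence step. For $N \ge 1$, the tail module $\sfmod{-2N\fcwt{2}}{\asemi{}{[\mu]}}$ is not positive-energy (by \cref{prop:+ESF}), so its character is not a formal $\qq$-power series with bounded-below exponents and the limit $N \to \infty$ has no naive interpretation. What rescues the argument is that the conformal weight shift under iterated spectral flow in the $-\fcwt{2}$ direction grows \emph{quadratically} in $N$; mere linear growth would be insufficient to exhaust any fixed bidegree, and it is this quadratic rate that makes the infinite series in \eqref{eq:atypchS} a bona fide element of the character ring used in the standard module formalism.
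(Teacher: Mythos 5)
Your argument is essentially the paper's own: the paper derives the character identity $\ch{\asemi{}{[\mu]}} = \ch{\arel{}{[\mu]}} - \ch{\sfmod{-\fcwt{2}}{\asemi{}{[\mu-\sroot{1}/2]}}}$ from \eqref{eq:sfdegR32}, applies the exact spectral flow functor to the shifted identity and then "substitutes back and iterates", which is exactly your telescoping recursion. Your additional coefficient-wise convergence check (the quadratic $-2N^2$ shift killing the tail and ensuring each $\zz^{\eta}\qq^{h}$ coefficient receives only finitely many contributions) is correct and makes precise what the paper treats informally via its discussion of topological completions of the span of standard characters.
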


Of course, we could have instead replaced $\mu$ by $\mu + \frac{1}{2} \sroot{1}$ in \eqref{eq:sfdegR32} and applied $\sfaut{\fcwt{2}}$ to get
\begin{equation}
	\ch{\asemi{}{[\mu]}} = \ch{\sfmod{\fcwt{2}}{\arel{}{[\mu+\sroot{1}/2]}}} - \ch{\sfmod{\fcwt{2}}{\asemi{}{[\mu+\sroot{1}/2]}}}.
\end{equation}
Substituting and iterating, as above, then results in an expression for the semirelaxed characters as a different infinite-linear combination of standard characters, this time involving spectral flows with positive multiples of $\fcwt{2}$:
\begin{equation} \label{eq:atypchS'}
	\ch{\asemi{}{[\mu]}} = \sum_{n=1}^{\infty} \brac*{\ch{\sfmod{(2n-1)\fcwt{2}}{\arel{}{[\mu+\sroot{1}/2]}}} - \ch{\sfmod{2n\fcwt{2}}{\arel{}{[\mu]}}}}.
\end{equation}
This difference should be interpreted as corresponding to a different topological completion of the vector space spanned by the standard characters.

More precisely, the completion implicit in \eqref{eq:atypchS} allows for infinitely many terms with spectral flows by negative multiples of the coweight $\fcwt{2}$, but only finitely many terms with positive multiples. On the other hand, the completion implicit in \eqref{eq:atypchS'} allows for infinitely many terms with positive multiples of $\fcwt{2}$ but only finitely negative multiples. This is akin to power series completions of Laurent polynomials where one allows the exponent of the variable to be unbounded either above or below, but not both. A geometric interpretation is that we have completed along a ray in either the $-\fcwt{2}$ or the $\fcwt{2}$ direction, respectively.

\begin{proposition} \label{prop:atypcharS}
	The modular S-transforms of the spectral flows of the semirelaxed $\themm$-module characters are given by
	\begin{subequations} \label{eq:atypcharS}
		\begin{equation} \label{eq:atypcharStrans}
	    \mods \set*{\ch{\sfmod{\fcwt{}}{\asemi{}{[\mu]}}}}
	    = \sum_{\fcwt{}' \in \cwlat} \int_{\rdcsub / \rlat} \asmat{\fcwt{}, \fcwt{}'}{[\mu], [\mu']} \ch{\sfmod{\fcwt{}'}{\arel{}{[\mu']}}} \, \dd [\mu'],
	  \end{equation}
	  where $\fcwt{} \in \cwlat$, $[\mu] \in (-\frac{3}{2} \fwt{1} + \RR \sroot{1}) / \ZZ \sroot{1}$ and
	  \begin{equation} \label{eq:atypcharSmat}
	    \asmat{\fcwt{}, \fcwt{}'}{[\mu], [\mu']} = \frac{\ee^{-\pi \ii \pair{\mu'}{\fcwt{2}}}}{2 \cos \brac[\big]{\pi \pair{\mu'}{\fcwt{2}}}} \smat{\fcwt{}, \fcwt{}'}{[\mu], [\mu']}.
	  \end{equation}
	\end{subequations}
	Here, the semirelaxed S-matrix entry \eqref{eq:atypcharSmat} is to be expanded as a geometric series in either $\ee^{2 \pi \ii \pair{\mu'}{\fcwt{2}}}$ or $\ee^{-2 \pi \ii \pair{\mu'}{\fcwt{2}}}$, depending on whether \eqref{eq:atypchS} or \eqref{eq:atypchS'}, respectively, is used.
\end{proposition}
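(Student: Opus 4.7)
The plan is to apply the S-transform \eqref{eq:stcharStrans} term-by-term to the infinite sum of standard characters in Lemma \ref{lem:atypchS}, then recognise the resulting coefficient of $\ch{\sfmod{\fcwt{}'}{\arel{}{[\mu']}}}$ as a geometric series whose sum simplifies to \eqref{eq:atypcharSmat}. Applying the exact functor $\sfaut{\fcwt{}}$ to the decomposition of $\ch{\asemi{}{[\mu]}}$ and using \eqref{eq:autchar} yields an expression for $\ch{\sfmod{\fcwt{}}{\asemi{}{[\mu]}}}$ as a similar alternating sum whose terms are each standard characters of the form $\ch{\sfmod{\fcwt{}-k\fcwt{2}}{\arel{}{[\mu]}}}$ or $\ch{\sfmod{\fcwt{}-k\fcwt{2}}{\arel{}{[\mu-\sroot{1}/2]}}}$. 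After formally interchanging the infinite sum with the sum/integral of \eqref{eq:stcharStrans}, the task reduces to summing, for each fixed $(\fcwt{}',[\mu'])$, the inner series in $n$.

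The key simplification comes from the multiplicative behaviour of \eqref{eq:stcharSmat} under shifts of the first index. Setting $a = -\tfrac{3}{2}\killing{\fcwt{2}}{\fcwt{}'} + \pair{\mu'}{\fcwt{2}}$ and $b = \pair{\sroot{1}}{\fcwt{}'}$, one obtains
\begin{equation*}
	\smat{\fcwt{}-k\fcwt{2}, \fcwt{}'}{[\mu], [\mu']} = \ee^{2\pi\ii k a} \, \smat{\fcwt{}, \fcwt{}'}{[\mu], [\mu']}, \qquad
	\smat{\fcwt{}-k\fcwt{2}, \fcwt{}'}{[\mu-\sroot{1}/2], [\mu']} = \ee^{\pi\ii b} \ee^{2\pi\ii k a} \, \smat{\fcwt{}, \fcwt{}'}{[\mu], [\mu']},
\end{equation*}
so the inner sum factors as $\smat{\fcwt{}, \fcwt{}'}{[\mu], [\mu']}$ times the geometric series $\sum_{n\ge 0} \ee^{4\pi\ii n a}(1 - \ee^{2\pi\ii a + \pi\ii b})$, which formally equals $(1 - \ee^{2\pi\ii a + \pi\ii b})/(1 - \ee^{4\pi\ii a})$.

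It remains to identify this ratio with the prefactor of \eqref{eq:atypcharSmat}. For $\fcwt{}' = c_1 \fcwt{1} + c_2 \fcwt{2} \in \cwlat$, one computes $\pair{\sroot{1}}{\fcwt{}'} = c_1$ and $3\killing{\fcwt{2}}{\fcwt{}'} = c_1 + 2c_2 \in \ZZ$ with the same parity as $c_1$. Consequently $\ee^{2\pi\ii a} = (-1)^{c_1} \ee^{2\pi\ii \pair{\mu'}{\fcwt{2}}}$ and $\ee^{\pi\ii b} = (-1)^{c_1}$, so the $(-1)^{c_1}$ factors cancel in $\ee^{2\pi\ii a + \pi\ii b}$ and square to $1$ in $\ee^{4\pi\ii a}$. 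The ratio therefore collapses to $(1 - z)/(1 - z^2) = 1/(1+z)$ with $z = \ee^{2\pi\ii \pair{\mu'}{\fcwt{2}}}$, which is precisely $\ee^{-\pi\ii\pair{\mu'}{\fcwt{2}}}/(2\cos(\pi\pair{\mu'}{\fcwt{2}}))$; repeating the computation from \eqref{eq:atypchS'} instead yields the same meromorphic function but with the geometric series resummed in the opposite direction, giving the second expansion indicated in the statement.

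The main obstacle is conceptual rather than computational: the geometric series $\sum_n \ee^{4\pi\ii n a}$ does not converge in the classical sense since $a$ is real, and the prefactor $1/(1 + \ee^{2\pi\ii\pair{\mu'}{\fcwt{2}}})$ has poles on the real locus of $[\mu']$. One must therefore be careful to formulate the interchange of S-transform and infinite sum within the topological completion of the standard character span specified by the direction of iteration in Lemma \ref{lem:atypchS} (respectively \eqref{eq:atypchS'}); this is the standard-module-formalism interpretation of characters as distributions valued in meromorphic functions of $(\theta,\tau)$, and the two expansion rays correspond precisely to the two geometric series expansions of $\asmat{\fcwt{}, \fcwt{}'}{[\mu], [\mu']}$ indicated at the end of the proposition.
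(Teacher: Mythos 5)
Your proposal is correct and follows essentially the same route as the paper's proof: apply the S-transform term-by-term to the spectral flow of the character formula of \cref{lem:atypchS}, exploit the multiplicative behaviour of \eqref{eq:stcharSmat} under shifts of the spectral-flow and weight labels, and resum the geometric series, with the phases collapsing by the same integrality facts (your coordinate computation $3\killing{\fcwt{2}}{\fcwt{}'}=c_1+2c_2$ is just an explicit version of the paper's observation that $3\fwt{2}=\sroot{1}+2\sroot{2}\in\rlat$ and $-\tfrac{3}{2}\fwt{2}+\sroot{1}=-\sroot{2}$). The closing remarks on the two expansion directions and the distributional interpretation likewise match the paper's treatment.
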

\begin{proof}
	Substituting \eqref{eq:stcharStrans} into the $\sfaut{\fcwt{}}$-spectral flow of \eqref{eq:atypchS} gives \eqref{eq:atypcharStrans}, with
	\begin{equation}
		\asmat{\fcwt{}, \fcwt{}'}{[\mu], [\mu']} = \sum_{n=0}^{\infty} \brac*{\smat{\fcwt{}-2n\fcwt{2}, \fcwt{}'}{[\mu], [\mu']} - \smat{\fcwt{}-(2n+1)\fcwt{2}, \fcwt{}'}{[\mu-\sroot{1}/2], [\mu']}}.
	\end{equation}
	Substituting \eqref{eq:stcharSmat} now gives
	\begin{align}
		\asmat{\fcwt{}, \fcwt{}'}{[\mu], [\mu']}
		&= \smat{\fcwt{}, \fcwt{}'}{[\mu], [\mu']} \sum_{n=0}^{\infty} \ee^{2 \pi \ii n \brac[\big]{-3 \killing{\fcwt{2}}{\fcwt{}'} + 2 \pair{\mu'}{\fcwt{2}}}} \cdot \brac*{1 - \ee^{2 \pi \ii \brac[\big]{-3 \killing{\fcwt{2}}{\fcwt{}'}/2 + \pair{\sroot{1}}{\fcwt{}'} / 2 + \pair{\mu'}{\fcwt{2}}}}} \notag \\
		&= \smat{\fcwt{}, \fcwt{}'}{[\mu], [\mu']} \sum_{n=0}^{\infty} \ee^{2 \pi \ii n \pair{\mu'}{2 \fcwt{2}}} \cdot \brac*{1 - \ee^{2 \pi \ii \pair{\mu'}{\fcwt{2}}}},
	\end{align}
	which simplifies to \eqref{eq:atypcharSmat} upon summing the geometric series in $\ee^{2 \pi \ii \pair{\mu'}{\fcwt{2}}}$.  Here, it is useful to note that $\killing{3 \fcwt{2}}{\fcwt{}'} = \pair{3 \fwt{2}}{\fcwt{}'} \in \ZZ$, since $3 \fwt{2} = \sroot{1} + 2 \sroot{2} \in \rlat$, and
	\begin{equation}
		-\tfrac{3}{2} \killing{\fcwt{2}}{\fcwt{}'} + \tfrac{1}{2} \pair{\sroot{1}}{\fcwt{}'} = \pair{-\tfrac{3}{2} \fwt{2} + \sroot{1}}{\fcwt{}'} = \pair{-\sroot{2}}{\fcwt{}'} \in \ZZ.
	\end{equation}
	It is easy to check that using the spectral flow of \eqref{eq:atypchS'} instead of \eqref{eq:atypchS} gives the same result after summing a geometric series in $\ee^{-2 \pi \ii \pair{\mu'}{\fcwt{2}}}$.
\end{proof}

Unlike the situation for the relaxed modules, there are other \srhw\ $\themm$-modules that may be obtained from the $\asemi{}{[\mu]}$ through twisting by an automorphism of $\grp{D}_6$.  This is however easily accommodated.
\begin{theorem} \label{thm:atypcharS}
	The modular S-transforms of the twists of the semirelaxed $\themm$-module characters are given by
	\begin{subequations} \label{eq:atypcharS'}
		\begin{equation} \label{eq:atypcharStrans'}
	    \mods \set*{\ch{\sfaut{\fcwt{}} \func{\omega}{\asemi{}{[\mu]}}}}
	    = \sum_{\fcwt{}' \in \cwlat} \int_{\rdcsub / \rlat} \asmat{\fcwt{}, \fcwt{}'; \omega}{[\mu], [\mu']} \ch{\sfmod{\fcwt{}'}{\arel{}{[\mu']}}} \, \dd [\mu'],
	  \end{equation}
	  where $\fcwt{} \in \cwlat$, $\omega \in \grp{D}_6$, $[\mu] \in (-\frac{3}{2} \fwt{1} + \RR \sroot{1}) / \ZZ \sroot{1}$ and
	  \begin{equation} \label{eq:atypcharSmat'}
	    \asmat{\fcwt{}, \fcwt{}'; \omega}{[\mu], [\mu']} = \frac{\ee^{-\pi \ii \pair{\mu'}{\omega(\fcwt{2})}}}{2 \cos \brac[\big]{\pi \pair{\mu'}{\omega(\fcwt{2})}}} \smat{\fcwt{}, \fcwt{}'}{[\omega(\mu)], [\mu']}.
	  \end{equation}
	\end{subequations}
\end{theorem}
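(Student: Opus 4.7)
The plan is to generalise the derivation of \cref{prop:atypcharS} by twisting every step by $\omega$. Applying the exact functor $\omega$ to the degeneration \eqref{eq:sfdegR32}, using $\omega \sfaut{\fcwt{}} \omega^{-1} = \sfaut{\omega(\fcwt{})}$ from \eqref{eq:sfD6rels} and the identification $\omega(\arel{}{[\mu]}) \cong \arel{}{[\omega(\mu)]}$ from \eqref{eq:Wident32}--\eqref{eq:dident32}, one obtains
\begin{equation}
\arel{}{[\omega(\mu)]} \cong \omega(\asemi{}{[\mu]}) \oplus \sfmod{-\omega(\fcwt{2})}{\omega(\asemi{}{[\mu-\sroot{1}/2]})}.
\end{equation}
Iterating exactly as in the derivation of \cref{lem:atypchS}, with the roles of $\fcwt{2}$ and $\sroot{1}/2$ played by $\omega(\fcwt{2})$ and $\omega(\sroot{1})/2$ (and noting that $[\omega(\mu-\sroot{1})] = [\omega(\mu)]$ is automatic since $\omega(\asemi{}{[\nu]})$ has weight support invariant under translation by $\omega(\sroot{1})$), gives
\begin{equation}
\ch{\omega(\asemi{}{[\mu]})} = \sum_{n=0}^{\infty} \brac*{\ch{\sfmod{-2n\omega(\fcwt{2})}{\arel{}{[\omega(\mu)]}}} - \ch{\sfmod{-(2n+1)\omega(\fcwt{2})}{\arel{}{[\omega(\mu-\sroot{1}/2)]}}}}.
\end{equation}

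Next, applying $\sfaut{\fcwt{}}$ and then the modular S-transform via \cref{thm:stcharmod} term by term yields the expression $\asmat{\fcwt{},\fcwt{}';\omega}{[\mu],[\mu']}$ as an alternating geometric-type sum of standard S-matrix entries $\smat{\fcwt{}-n\omega(\fcwt{2}),\fcwt{}'}{\ldots}{\ldots}$. Factoring out $\smat{\fcwt{},\fcwt{}'}{[\omega(\mu)],[\mu']}$, the $n$-dependent part collapses to $\ee^{2\pi\ii n A}$ and $\ee^{2\pi\ii n A}\ee^{2\pi\ii B}$, where
\begin{equation}
A = -3\killing{\omega(\fcwt{2})}{\fcwt{}'} + 2\pair{\mu'}{\omega(\fcwt{2})}, \qquad B = -\tfrac{3}{2}\killing{\omega(\fcwt{2})}{\fcwt{}'} + \tfrac{1}{2}\pair{\omega(\sroot{1})}{\fcwt{}'} + \pair{\mu'}{\omega(\fcwt{2})}.
\end{equation}
The key arithmetic fact is that $A \equiv 2\pair{\mu'}{\omega(\fcwt{2})}$ and $B \equiv \pair{\mu'}{\omega(\fcwt{2})}$ modulo $\ZZ$: since $\omega$ preserves $\rlat$, one has $A - 2\pair{\mu'}{\omega(\fcwt{2})} = \pair{-3\omega(\fwt{2})}{\fcwt{}'} \in \ZZ$ (because $3\fwt{2} = \sroot{1}+2\sroot{2} \in \rlat$), and similarly $B - \pair{\mu'}{\omega(\fcwt{2})} = \pair{\omega(-\tfrac{3}{2}\fwt{2}+\tfrac{1}{2}\sroot{1})}{\fcwt{}'} = \pair{-\omega(\sroot{2})}{\fcwt{}'} \in \ZZ$. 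These are precisely the $\omega$-twisted versions of the identities exploited in \cref{prop:atypcharS}, and summing the geometric series in $\ee^{2\pi\ii\pair{\mu'}{\omega(\fcwt{2})}}$ then collapses the sum to \eqref{eq:atypcharSmat'}.

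The main obstacle is the careful bookkeeping needed to confirm that each step commutes with $\omega$ as claimed: in particular, that the iteration closes (i.e.\ that $\omega(\asemi{}{[\mu-\sroot{1}]}) \cong \omega(\asemi{}{[\mu]})$ holds as twisted modules, not merely at the level of characters), and that the alternative expansion analogous to \eqref{eq:atypchS'} yields the same S-matrix entry, now as a geometric series in $\ee^{-2\pi\ii\pair{\mu'}{\omega(\fcwt{2})}}$. Both follow from the observation that twisting by $\omega$ maps the topological completion along the ray $-\NN\fcwt{2}$ to the completion along $-\NN\omega(\fcwt{2})$, so no additional analytic input is required beyond that already used in \cref{prop:atypcharS}.
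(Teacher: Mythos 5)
Your argument is correct and is essentially the proof in the paper: you arrive at the same $\omega$-twisted character formula (the paper obtains it by applying $\omega$ directly to the formula of \cref{lem:atypchS}, using $\omega\sfaut{\xi}\omega^{-1}=\sfaut{\omega(\xi)}$ and \cref{prop:Wtwists,prop:dtwists}, rather than re-iterating the twisted degeneration, but this is the same content), and then rerun the computation of \cref{prop:atypcharS} with $[\mu]\mapsto[\omega(\mu)]$, $\fcwt{2}\mapsto\omega(\fcwt{2})$, $\sroot{1}\mapsto\omega(\sroot{1})$, with your integrality checks for $A$ and $B$ being exactly the paper's remark that the simplifications persist because $\omega$ preserves $\rlat$.
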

\begin{proof}
	The character of $\func{\omega}{\asemi{}{[\mu]}}$ is simply
	\begin{align} \label{eq:atypchSD6}
		\ch{\func{\omega}{\asemi{}{[\mu]}}}
		&= \sum_{n=0}^{\infty} \brac*{\ch{\omega \sfmod{-2n\fcwt{2}}{\arel{}{[\mu]}}} - \ch{\omega \sfmod{-(2n+1)\fcwt{2}}{\arel{}{[\mu-\sroot{1}/2]}}}} \notag \\
		&= \sum_{n=0}^{\infty} \brac*{\ch{\sfaut{-2n \omega(\fcwt{2})} \func{\omega}{\arel{}{[\mu]}}} - \ch{\sfaut{-(2n+1) \omega(\fcwt{2})} \func{\omega}{\arel{}{[\mu-\sroot{1}/2]}}}} \notag \\
		&= \sum_{n=0}^{\infty} \brac*{\ch{\sfmod{-2n \omega(\fcwt{2})}{\arel{}{[\omega(\mu)]}}} - \ch{\sfmod{-(2n+1) \omega(\fcwt{2})}{\arel{}{[\omega(\mu)-\omega(\sroot{1})/2]}}}},
	\end{align}
	by \cref{prop:Wtwists,prop:dtwists,lem:atypchS}.  Applying $\sfaut{\fcwt{}}$, we can use \cref{prop:atypcharS} with $[\mu] \mapsto [\omega(\mu)]$, $\fcwt{2} \mapsto \omega(\fcwt{2})$ and $\sroot{1} \mapsto \omega(\sroot{1})$ (the simplifications in the proof continue to hold because $\omega$ preserves $\rlat$).
\end{proof}
\noindent Obviously, the result would remain unchanged if we had started with the character formula \eqref{eq:atypchS'} instead.

It therefore only remains to determine character formulae and S-transforms for the \hw\ $\themm$-modules.  Unlike the semirelaxed case, in which there were two choices for the spectral flow ``direction'' in the character formulae ($-\fcwt{2}$ in \eqref{eq:atypchS} and $\fcwt{2}$ in \eqref{eq:atypchS'}), there are now many choices.  We shall only consider a single choice for simplicity, omitting the check that other choices reproduce the same S-transformation formulae (up to expanding geometric series in different domains).

\begin{lemma} \label{lem:atypchL}
	We have the following \hw\ $\themm$-character formula:
		\begin{equation} \label{eq:atypchLthin}
			\ch{\airr{-3\fwt{1}/2}} = \sum_{n=0}^{\infty} \brac*{\ch{\sfaut{-2n \fcwt{1}} \weylmod{2}{\asemi{}{[-3\fwt{1}/2]}}} - \ch{\sfmod{-2n \fcwt{1} - \fcwt{2}}{\asemi{}{[-\wvec/2]}}}}.
		\end{equation}
\end{lemma}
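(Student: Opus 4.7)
The plan is to mirror the proof of \cref{lem:atypchS}: derive a recursion from the degenerations in \eqref{eq:degS32} and iterate it to produce the infinite sum. First I would $\weylaut{2}$-twist the first degeneration in \eqref{eq:degS32}, using the isomorphism $\weylmod{2}{\airr{-3\fwt{1}/2}} \cong \airr{-3\fwt{1}/2}$ from \eqref{eq:Wident32} to obtain
\begin{equation*}
  \weylmod{2}{\asemi{}{[-3\fwt{1}/2]}} \cong \airr{-3\fwt{1}/2} \oplus \weylaut{2}\weylmod{1}{\airr{-\wvec/2}},
\end{equation*}
which rearranges to the base character identity $\ch{\airr{-3\fwt{1}/2}} = \ch{\weylmod{2}{\asemi{}{[-3\fwt{1}/2]}}} - \ch{\weylaut{2}\weylmod{1}{\airr{-\wvec/2}}}$.

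Next, since the three affine Dynkin labels of $-\wvec/2$ all equal $-\tfrac{1}{2}$, \cref{prop:sftwists} collapses to give $\weylaut{2}\weylmod{1}{\airr{-\wvec/2}} \cong \sfmod{-\fcwt{2}}{\airr{-\wvec/2}}$. Twisting the second degeneration in \eqref{eq:degS32} by $\sfaut{-\fcwt{2}}$ and substituting the resulting expression for $\ch{\sfmod{-\fcwt{2}}{\airr{-\wvec/2}}}$ into the base identity rewrites it as
\begin{equation*}
  \ch{\airr{-3\fwt{1}/2}} = \ch{\weylmod{2}{\asemi{}{[-3\fwt{1}/2]}}} - \ch{\sfmod{-\fcwt{2}}{\asemi{}{[-\wvec/2]}}} + \ch{\sfaut{-\fcwt{2}}\weylmod{1}{\airr{-3\fwt{1}/2}}}.
\end{equation*}

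The crucial step is the identification $\sfaut{-\fcwt{2}}\weylmod{1}{\airr{-3\fwt{1}/2}} \cong \sfmod{-2\fcwt{1}}{\airr{-3\fwt{1}/2}}$. I would verify this by first commuting $\sfaut{-\fcwt{2}}$ past $\weylaut{1}$ using \eqref{eq:sfD6rels} and $\weylmod{1}{\fcwt{2}} = \fcwt{2}$, then computing both sides with \cref{prop:sftwists}: the left-hand side becomes $\weylmod{1}{\sfmod{-\fcwt{2}}{\airr{-3\fwt{1}/2}}} \cong \weylaut{1}\weylmod{2}{\airr{-3\fwt{2}/2}}$ (after using $\weylmod{1}{\airr{-3\fwt{2}/2}} \cong \airr{-3\fwt{2}/2}$ from \eqref{eq:Wident32}), while the right is computed iteratively via the intermediate $\sfmod{-\fcwt{1}}{\airr{-3\fwt{1}/2}} \cong \airr{0}$ (forced by the $\weyl$-invariance of $\airr{0}$) and also yields $\weylaut{1}\weylmod{2}{\airr{-3\fwt{2}/2}}$.

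Substituting this identification, applying $\sfaut{-2\fcwt{1}}$ and iterating $N$ times produces the finite truncation of \eqref{eq:atypchLthin} plus a remainder of the form $\ch{\sfmod{-2(N+1)\fcwt{1}}{\airr{-3\fwt{1}/2}}}$; the claimed infinite formula then holds in the topological completion of the standard character space (analogous to that used in \cref{lem:atypchS}) in which infinite sums of negative spectral flows by $\fcwt{1}$ make sense and the remainder vanishes. The hard part is the spectral flow identification in the third paragraph: it relies on the admissibility-specific collapses in \eqref{eq:Wident32} and \eqref{eq:sfident32}, not merely on the universal properties of Weyl twists and spectral flows.
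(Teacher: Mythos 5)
Your proposal is correct and follows essentially the same route as the paper: twist the first degeneration of \eqref{eq:degS32} by $\weylaut{2}$, rewrite the Weyl-twisted highest-weight summands as spectral flows (your verification via \cref{prop:sftwists} reproduces exactly the identifications the paper quotes in its intermediate display), apply $\sfaut{-\fcwt{2}}$ to the second degeneration, substitute to get the recursion with the extra term $\ch{\sfmod{-2\fcwt{1}}{\airr{-3\fwt{1}/2}}}$, and iterate. Your closing remarks on the vanishing remainder and the choice of topological completion match the discussion the paper gives immediately after the lemma.
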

\begin{proof}
	We start with the degenerations of \eqref{eq:degS32}, applying $\weylaut{2}$ to the first:
	\begin{subequations} \label{eq:Sdeg}
		\begin{align}
			\weylmod{2}{\asemi{}{[-3\fwt{1}/2]}}
			&\cong \weylmod{2}{\airr{-3\fwt{1}/2}} \oplus \weylaut{2} \weylmod{1}{\airr{-\wvec/2}}
			\cong \airr{-3\fwt{1}/2} \oplus \sfmod{-\fcwt{2}}{\airr{-\wvec/2}}, \label{eq:Sdeg1} \\
			\asemi{}{-\wvec/2}
			&\cong \airr{-\wvec/2} \oplus \weylmod{1}{\airr{-3\fwt{1}/2}}
			\cong \airr{-\wvec/2} \oplus \sfmod{-2\fcwt{1}+\fcwt{2}}{\airr{-3\fwt{1}/2}}. \label{eq:Sdeg2}
		\end{align}
	\end{subequations}
	Apply $\sfaut{-\fcwt{2}}$ to \eqref{eq:Sdeg2}, take its character and substitute into the character of \eqref{eq:Sdeg1}.  The result is
	\begin{equation}
		\ch{\airr{-3\fwt{1}/2}} = \ch{\weylmod{2}{\asemi{}{[-3\fwt{1}/2]}}} - \ch{\sfmod{-\fcwt{2}}{\asemi{}{-\wvec/2}}} + \ch{\sfmod{-2\fcwt{1}}{\airr{-3\fwt{1}/2}}}.
	\end{equation}
	Iterating this gives \eqref{eq:atypchLthin}.
\end{proof}

As both $\airr{0}$ and $\airr{-3\fwt{2}/2}$ may be obtained from $\airr{-3\fwt{1}/2}$ by twisting, their characters may be expressed as an infinite-linear combination of standard characters.  The same is true for $\airr{-\wvec/2}$, as \eqref{eq:Sdeg} makes clear, hence it is true for the character of every module in $\awcat{3,2}$.  Along with the modularity result \cref{thm:stcharmod}, this confirms our choice of the $\sfmod{\fcwt{}}{\arel{}{[\mu]}}$, with $\fcwt{} \in \cwlat$ and $[\mu] \in \rdcsub / \rlat$, as the standard modules of $\themm$.

Notice that, as with the character formula of \cref{lem:atypchS}, we have chosen a completion of the vector space spanned by the standard characters to obtain \eqref{eq:atypchLthin}, specifically that all but finitely many spectral flow coweights must lie in the cone spanned by \(-\fcwt{2}\) and \(\fcwt{2}-\fcwt{1}\). (Observe that $\weylaut{2}$ reflects the coweight \(-\fcwt{2}\) coming from the semidense module in the first summand of \eqref{eq:atypchLthin} to \(\fcwt{2}-\fcwt{1}\)). Therefore, every simple module admits a character formula with all but finitely many spectral flow indices in this cone. A different choice of completion would have resulted in a different cone. Specifically the choices of completions are in bijection with the cones spanned by any of the six pairs of coweights in the set \(\set{\pm\fcwt{1},\pm\fcwt{2},\pm(\fcwt{1}-\fcwt{2})}\) whose relative angle is \(2\pi/3\).

\begin{theorem} \label{thm:atypcharL}
	The modular S-transforms of the spectral flows of the \hw\ $\themm$-module characters are given by
	\begin{subequations} \label{eq:atypcharL}
		\begin{equation} \label{eq:atypcharLtrans}
	    \mods \set*{\ch{\sfmod{\fcwt{}}{\airr{\mu}}}}
	    = \sum_{\fcwt{}' \in \cwlat} \int_{\rdcsub / \rlat} \aasmat{\fcwt{}, \fcwt{}'}{\mu, [\mu']} \ch{\sfmod{\fcwt{}'}{\arel{}{[\mu']}}} \, \dd [\mu'],
	  \end{equation}
	  where $\fcwt{} \in \cwlat$ and $\mu \in \set{0, -\frac{3}{2}\fwt{1}, -\frac{3}{2}\fwt{2}, -\frac{1}{2}\wvec}$.  In particular, we have
    \begin{equation} \label{eq:atypcharLmat}
			\aasmat{\fcwt{}, \fcwt{}'}{-3\fwt{1}/2, [\mu']}
			= \frac{\ee^{2 \pi \ii \brac[\big]{3 \killing{\fcwt{}+\fcwt{1}}{\fcwt{}'}/2 - \pair{\mu'}{\fcwt{}+\fcwt{1}}}}}{2 \brac*{1 + \cos \brac[\big]{2 \pi \pair{\mu'}{\fcwt{1}}} + \cos \brac[\big]{2 \pi \pair{\mu'}{\fcwt{2}}} + \cos \brac[\big]{2 \pi \pair{\mu'}{\fcwt{1}-\fcwt{2}}}}}.
		\end{equation}
	\end{subequations}
\end{theorem}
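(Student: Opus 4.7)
I apply the modular S-transform directly to the character formula of \cref{lem:atypchL}, use \cref{thm:atypcharS} to rewrite each resulting (twisted) semirelaxed S-transform as an integral over standard characters, and then collapse the remaining $n$-sum in closed form.  Spectrally flowing both sides of \eqref{eq:atypchLthin} by $\sfaut{\fcwt{}}$---using $\omega\sfaut{\xi} = \sfaut{\func{\omega}{\xi}}\omega$ from \eqref{eq:sfD6rels}---then applying $\mods$ and invoking \cref{thm:atypcharS} with $\omega = \weylaut{2}$ for the first summand (noting that $\weylmod{2}{-\tfrac{3}{2}\fwt{1}} = -\tfrac{3}{2}\fwt{1}$ since $\pair{-\tfrac{3}{2}\fwt{1}}{h^2} = 0$, while $\weylaut{2}(\fcwt{2}) = \fcwt{1}-\fcwt{2}$ follows from $h^2 = -\fcwt{1}+2\fcwt{2}$) and $\omega = \id$ for the second, delivers the required form \eqref{eq:atypcharLtrans}, with coefficients given as $n$-sums over the standard S-matrix.

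Substituting the explicit S-matrix \eqref{eq:stcharSmat}, the $n$-dependence in both terms collapses onto a common ratio $\ee^{2\pi\ii n[-3\killing{\fcwt{1}}{\fcwt{}'} + 2\pair{\mu'}{\fcwt{1}}]}$.  A short computation---using $\killing{\fcwt{i}}{h^j} = \delta_{ij}$ together with the inverse of the Cartan matrix---yields $\killing{\fcwt{1}}{\fcwt{1}} = \tfrac{2}{3}$ and $\killing{\fcwt{1}}{\fcwt{2}} = \tfrac{1}{3}$, so that $3\killing{\fcwt{1}}{\fcwt{}'} \in \ZZ$ for every $\fcwt{}' \in \cwlat$.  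The ratio therefore reduces to $\ee^{4\pi\ii\pair{\mu'}{\fcwt{1}}}$ and the series sums formally to $(1-\ee^{4\pi\ii\pair{\mu'}{\fcwt{1}}})^{-1}$, providing the universal denominator factor common to both contributions.

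The main obstacle will then be combining the two surviving contributions, each carrying its own cosine denominator from \eqref{eq:atypcharSmat'} (namely $2\cos(\pi\pair{\mu'}{\fcwt{1}-\fcwt{2}})$ and $2\cos(\pi\pair{\mu'}{\fcwt{2}})$), into the manifestly symmetric form \eqref{eq:atypcharLmat}.  Writing $x = \pi\pair{\mu'}{\fcwt{1}}$ and $y = \pi\pair{\mu'}{\fcwt{2}}$, I expect the two fractions to come over the common denominator $8\cos(x)\cos(y)\cos(x-y)$ and the numerator phases to collapse---via sum-to-product together with the identity $1 + \cos(2x) + \cos(2y) + \cos(2(x-y)) = 4\cos(x)\cos(y)\cos(x-y)$, obtained by combining $\cos(2x)+\cos(2y) = 2\cos(x+y)\cos(x-y)$ with $1+\cos(2(x-y)) = 2\cos^2(x-y)$---into the expression on the right of \eqref{eq:atypcharLmat}, including the symmetric phase $\ee^{2\pi\ii[3\killing{\fcwt{}+\fcwt{1}}{\fcwt{}'}/2 - \pair{\mu'}{\fcwt{}+\fcwt{1}}]}$.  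The remaining values of $\mu$ then reduce routinely to this case: $\dynkmod{\airr{-3\fwt{1}/2}} \cong \airr{-3\fwt{2}/2}$ via \eqref{eq:dident32} handles $\mu = -\tfrac{3}{2}\fwt{2}$; combining \cref{prop:sftwists} with \eqref{eq:Wident32} gives $\sfmod{-\fcwt{1}}{\airr{-3\fwt{1}/2}} \cong \weylaut{1}\weylmod{2}{\airr{0}} \cong \airr{0}$, which specialises the result at $\fcwt{} \mapsto \fcwt{}-\fcwt{1}$; and $\ch{\airr{-\wvec/2}}$ is extracted from the degeneration \eqref{eq:Sdeg2}, $\asemi{}{[-\wvec/2]} \cong \airr{-\wvec/2} \oplus \sfmod{-2\fcwt{1}+\fcwt{2}}{\airr{-3\fwt{1}/2}}$, by combining \cref{thm:atypcharS} with the $\mu = -\tfrac{3}{2}\fwt{1}$ case just established.
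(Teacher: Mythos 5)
Your proposal is correct and follows essentially the same route as the paper's proof: spectrally flow the character formula of \cref{lem:atypchL}, apply \cref{thm:atypcharS} with $\omega=\weylaut{2}$ (first summand) and $\omega=\id$ (second), sum the geometric series using $3\killing{\fcwt{1}}{\fcwt{}'}\in\ZZ$, and finish with the trigonometric identity $1+\cos(2x)+\cos(2y)+\cos(2(x-y))=4\cos(x)\cos(y)\cos(x-y)$, which is exactly the "rest is trigonometric identities" step the paper leaves implicit. Your treatment of the remaining weights $\mu\in\set{0,-\tfrac32\fwt{2},-\tfrac12\wvec}$ via $\dynkaut$-twisting, the spectral-flow identification with $\airr{0}$ and the degeneration \eqref{eq:Sdeg2} likewise matches how the paper (in the discussion surrounding the theorem and \cref{cor:Svac}) reduces those cases to the $\mu=-\tfrac32\fwt{1}$ formula.
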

\begin{proof}
	These calculations are similar to the proof of \cref{prop:atypcharS}.  First, note that \eqref{eq:atypcharSmat'} and \eqref{eq:atypchLthin} give
	\begin{align}
		\aasmat{\fcwt{}, \fcwt{}'}{-3\fwt{1}/2, [\mu']}
		&= \sum_{n=0}^{\infty} \brac*{\asmat{\fcwt{} - 2n \fcwt{1}, \fcwt{}'; \weylaut{2}}{[-3\fwt{1}/2], [\mu']} - \asmat{\fcwt{} - 2n \fcwt{1} - \fcwt{2}, \fcwt{}'}{[-\wvec/2], [\mu']}} \notag \\
		&= \frac{\ee^{-\pi \ii \pair{\mu'}{\fcwt{1}-\fcwt{2}}}}{2 \cos \brac[\big]{\pi \pair{\mu'}{\fcwt{1}-\fcwt{2}}}} \sum_{n=0}^{\infty} \smat{\fcwt{} - 2n \fcwt{1}, \fcwt{}'}{[-3\fwt{1}/2], [\mu']}
		- \frac{\ee^{-\pi \ii \pair{\mu'}{\fcwt{2}}}}{2 \cos \brac[\big]{\pi \pair{\mu'}{\fcwt{2}}}} \sum_{n=0}^{\infty} \smat{\fcwt{} - 2n \fcwt{1} - \fcwt{2}, \fcwt{}'}{[-\wvec/2], [\mu']}.
	\end{align}
	Substituting \eqref{eq:stcharSmat} and summing the geometric series now gives
	\begin{equation}
		\aasmat{\fcwt{}, \fcwt{}'}{-3\fwt{1}/2, [\mu']}
		= \frac{\ee^{2 \pi \ii \brac[\big]{3\pair{\fcwt{}+\fcwt{1}}{\fcwt{}'}/2 - \pair{\mu'}{\fcwt{}}}}}{1 - \ee^{2 \pi \ii \pair{\mu'}{2 \fcwt{1}}}}
		\sqbrac*{\frac{\ee^{-\pi \ii \pair{\mu'}{\fcwt{1}-\fcwt{2}}}}{2 \cos \brac[\big]{\pi \pair{\mu'}{\fcwt{1}-\fcwt{2}}}} - \frac{\ee^{+\pi \ii \pair{\mu'}{\fcwt{2}}}}{2 \cos \brac[\big]{\pi \pair{\mu'}{\fcwt{2}}}}},
	\end{equation}
	after simplifying appropriately.  The rest is trigonometric identities.
\end{proof}
\noindent This can of course be generalised to include $\grp{D}_6$-twists.  We leave this detail as an exercise.  Setting $\fcwt{} = -\fcwt{1}$ in \cref{thm:atypcharL} gives us the S-transform of the vacuum character.
\begin{corollary} \label{cor:Svac}
	The modular S-transform of the character of the vacuum module $\airr{0}$ of $\themm$ is given by \eqref{eq:atypcharLtrans}, where
	\begin{equation} \label{eq:Svac}
		\aasmat{0, \fcwt{}'}{0, [\mu']} = \frac{1}{2 \brac[\Big]{1 + \cos \brac[\big]{2 \pi \pair{\mu'}{\fcwt{1}}} + \cos \brac[\big]{2 \pi \pair{\mu'}{\fcwt{2}}} + \cos \brac[\big]{2 \pi \pair{\mu'}{\fcwt{1}-\fcwt{2}}}}}.
	\end{equation}
\end{corollary}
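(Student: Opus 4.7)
The plan is to reduce the statement to Theorem~\ref{thm:atypcharL} by realising the vacuum module $\airr{0}$ as a spectral flow of $\airr{-\frac{3}{2}\fwt{1}}$, after which the formula \eqref{eq:Svac} will follow by direct substitution into \eqref{eq:atypcharLmat}.

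First I would identify the vacuum in spectral-flow terms. The affine weight associated with $\lambda = 0$ has Dynkin labels $(\kk, 0, 0) = (-\tfrac{3}{2}, 0, 0)$, and clearly $0 \in \pwlat$ with $\lambda^{F,\wun}_1 = \lambda^{F,\wun}_2 = 0$. The first identification in \cref{prop:sftwists} therefore applies and yields
\begin{equation*}
    \sfmod{\fcwt{1}}{\airr{0}} \cong \airr{(0, -3/2, 0)} = \airr{-\frac{3}{2}\fwt{1}},
    \qquad \text{equivalently} \qquad
    \airr{0} \cong \sfmod{-\fcwt{1}}{\airr{-\frac{3}{2}\fwt{1}}}.
\end{equation*}
Consequently, $\ch{\airr{0}} = \ch{\sfmod{-\fcwt{1}}{\airr{-\frac{3}{2}\fwt{1}}}}$.

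Next I would specialise \eqref{eq:atypcharLmat} to $\fcwt{} = -\fcwt{1}$ and $\mu = -\frac{3}{2}\fwt{1}$. The key observation is that $\fcwt{} + \fcwt{1} = 0$, so the numerator $\ee^{2\pi\ii\brac[\big]{3\killing{\fcwt{}+\fcwt{1}}{\fcwt{}'}/2 - \pair{\mu'}{\fcwt{}+\fcwt{1}}}}$ collapses to $1$, while the denominator is unaffected by this specialisation. The resulting expression is exactly \eqref{eq:Svac}; the relabelling $\aasmat{0,\fcwt{}'}{0,[\mu']}$ simply reflects that we are indexing the entry by the vacuum data $(\fcwt{} = 0,\ \mu = 0)$ rather than by the spectrally-flowed data $(-\fcwt{1},\ -\frac{3}{2}\fwt{1})$.

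There is no real obstacle here: all the analytic and combinatorial work has already been carried out in the proof of \cref{thm:atypcharL}, and the only step specific to the vacuum is the spectral-flow identification above, which is a direct application of \cref{prop:sftwists}. The main thing to watch is the sign bookkeeping for $\fcwt{}$ and the projection convention in the triple-of-Dynkin-labels notation, both of which have already been used consistently in the prior sections.
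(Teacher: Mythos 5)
Your proposal is correct and is essentially the paper's own argument: the paper simply sets $\fcwt{} = -\fcwt{1}$ in \cref{thm:atypcharL}, relying on the identification $\airr{0} \cong \sfmod{-\fcwt{1}}{\airr{-3\fwt{1}/2}}$ recorded in \eqref{eq:sfident32}, exactly as you do via \cref{prop:sftwists}. The only difference is that you spell out the spectral-flow identification and the collapse of the numerator explicitly, which the paper leaves implicit.
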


\subsection{Grothendieck fusion rules} \label{sec:32fusion}

We have identified a set of standard modules for the $\slthree$ minimal model $\themm$, computed their characters and modular S-transforms and determined explicit formulae that express the character of any module in $\awcat{3,2}$ in the basis of standard characters.  To test the consistency of our results, we shall apply the standard Verlinde formula of \cite{CreLog13,RidVer14} to determine the multiplicities of the Grothendieck fusion rules of $\themm$.  A highly nontrivial check of this consistency is whether these multiplicities are indeed nonnegative integers.  We remark that the standard module formalism involves ideas and manipulations which may be unfamiliar to those versed in the modularity of rational \voas.  In our opinion, this is an unavoidable consequence of the existence of modules that are not completely reducible and we refer to the original articles for further discussion and motivations.

In point of fact, we can only conjecture that the numbers computed by the standard Verlinde formula are actually the multiplicities appearing in the Grothendieck fusion rules.  The minimal model $\themm$ is neither rational nor $C_2$-cofinite and its standard modules are not even $C_1$-cofinite.  It is therefore not clear that the fusion multiplicities need be finite.  Nor is it even clear that the fusion product $\fuse$ induces a well defined product $\Grfuse$ on the Grothendieck group of $\awcat{3,2}$ (we do not know if fusing with a given module defines an exact functor on $\awcat{3,2}$).

We nevertheless expect that all these technical objections can somehow be overcome.  Let $\awcat{3,2}'$ denote the full subcategory of $\awcat{3,2}$ whose objects are $\themm$-modules with \emph{real} weights.  We conjecture that $\awcat{3,2}'$ is a rigid braided tensor category, hence that the Grothendieck fusion ring is well defined.\footnote{This rigidity conjecture is very natural as all rational \voas\ are known to produce rigid module categories (modular tensor categories even) \cite{HuaVer04} and a growing number of nonrational \voas\ are also known to admit rigid module categories \cite{TsuTen12,AllBos20,CreTen20,CreTen20c,CreRib20,CreTen20b}.  There is, however, a known counterexample \cite{GabFus09}.  A modified Grothendieck fusion ring for this counterexample was studied in detail in \cite{RidMod13}.}  We shall additionally conjecture that the structure constants of this ring, the Grothendieck fusion multiplicities, are indeed computed by the standard Verlinde formula.  These conjectures are understood to be in force for the remainder of this \lcnamecref{sec:32fusion}.  (Because of these conjectures, we will not state the results in this \lcnamecref{sec:32fusion} as theorems.)

With this understanding, the standard Verlinde formula amounts to the following statement.  The Grothendieck fusion rules of the modules of $\awcat{3,2}'$ take the form
\begin{subequations} \label{eq:svf}
	\begin{equation} \label{eq:grfr}
		\Gr{\Mod{M}} \Grfuse \Gr{\Mod{N}} = \sum_{\fcwt{}'' \in \cwlat} \int_{\rdcsub / \rlat} \fuscoeff{\Mod{M}}{\Mod{N}}{\sfmod{\fcwt{}''}{\arel{}{[\mu'']}}} \Gr{\sfmod{\fcwt{}''}{\arel{}{[\mu'']}}} \, \dd [\mu''],
	\end{equation}
	where $\Gr{\Mod{M}}$ denotes the image of the $\themm$-module $\Mod{M}$ in the Grothendieck ring and the Grothendieck fusion coefficients are given by
	\begin{equation} \label{eq:grfc}
		\fuscoeff{\Mod{M}}{\Mod{N}}{\sfmod{\fcwt{}''}{\arel{}{[\mu'']}}} = \sum_{\fcwt{}''' \in \cwlat} \int_{\rdcsub / \rlat} \frac{\mods \sqbrac[\big]{\Mod{M}, \sfmod{\fcwt{}''}{\arel{}{[\mu'']}}} \mods \sqbrac[\big]{\Mod{N}, \sfmod{\fcwt{}''}{\arel{}{[\mu'']}}} \brac*{\smat{\fcwt{}'', \fcwt{}'''}{[\mu''], [\mu''']}}^*}{\aasmat{0, \fcwt{}'''}{0, [\mu''']}} \, \dd [\mu'''].
	\end{equation}
\end{subequations}
Here, $(\blank)^*$ denotes complex conjugation and the S-matrix entries involving $\Mod{M}$ and $\Mod{N}$ have to be interpreted as those appropriate to their atypicality degrees.

Before we start applying the standard Verlinde formula, there are two simplifications to note.  Consider first the effect of a $\grp{D}_6$-twist $\omega$ on the standard Grothendieck fusion coefficients.  Using the explicit formulae \eqref{eq:stcharSmat} and \eqref{eq:Svac} for the standard and vacuum S-matrix entries, as well as the fact that $\cwlat$ and $\rlat$ are $\grp{D}_6$-invariant, we see that
\begin{align}
	\fuscoeff{\omega \sfmod{\fcwt{}}{\arel{}{[\mu]}}}{\omega \sfmod{\fcwt{}'}{\arel{}{[\mu']}}}{\omega \sfmod{\fcwt{}''}{\arel{}{[\mu'']}}}
	&= \fuscoeff{\sfmod{\omega(\fcwt{})}{\arel{}{[\omega(\mu)]}}}{\sfmod{\omega(\fcwt{}')}{\arel{}{[\omega(\mu')]}}}{\sfmod{\omega(\fcwt{}'')}{\arel{}{[\omega(\mu'')]}}} \notag \\
	&= \sum_{\fcwt{}''' \in \cwlat} \int_{\rdcsub / \rlat} \frac{\smat{\omega(\fcwt{}), \fcwt{}'''}{[\omega(\mu)], [\mu''']} \smat{\omega(\fcwt{}'), \fcwt{}'''}{[\omega(\mu')], [\mu''']} \brac*{\smat{\omega(\fcwt{}''), \fcwt{}'''}{[\omega(\mu'')], [\mu''']}}^*}{\aasmat{0, \fcwt{}'''}{0, [\mu''']}} \, \dd [\mu'''] \notag \\
	&=\sum_{\fcwt{}''' \in \cwlat} \int_{\rdcsub / \rlat} \frac{\smat{\fcwt{}, \omega^{-1}(\fcwt{}''')}{[\mu], [\omega^{-1}(\mu''')]} \smat{\fcwt{}', \omega^{-1}(\fcwt{}''')}{[\mu'], [\omega^{-1}(\mu''')]} \brac*{\smat{\fcwt{}'', \omega^{-1}(\fcwt{}''')}{[\mu''], [\omega^{-1}(\mu''')]}}^*}{\aasmat{0, \omega^{-1}(\fcwt{}''')}{0, [\omega^{-1}(\mu''')]}} \, \dd [\mu'''] \notag \\
	&= \sum_{\fcwt{}''' \in \cwlat} \int_{\rdcsub / \rlat} \frac{\smat{\fcwt{}, \fcwt{}'''}{[\mu], [\mu''']} \smat{\fcwt{}', \fcwt{}'''}{[\mu'], [\mu''']} \brac*{\smat{\fcwt{}'', \fcwt{}'''}{[\mu''], [\mu''']}}^*}{\aasmat{0, \fcwt{}'''}{0, [\mu''']}} \, \dd [\mu'''] \notag \\
	&= \fuscoeff{\sfmod{\fcwt{}}{\arel{}{[\mu]}}}{\sfmod{\fcwt{}'}{\arel{}{[\mu']}}}{\sfmod{\fcwt{}''}{\arel{}{[\mu'']}}}.
\end{align}
Because the standard characters form a basis for the space spanned by the characters of the modules of $\awcat{3,2}'$, this $\grp{D}_6$-invariance also holds for general Grothendieck fusion coefficients.  The Grothendieck fusion rules \eqref{eq:grfr} therefore remain true if one applies $\omega$ to each module (on both sides):
\begin{equation} \label{eq:grD6}
	\Gr{\omega(\Mod{M})} \Grfuse \Gr{\omega(\Mod{N})} = \sum_{\fcwt{}'' \in \cwlat} \int_{\rdcsub / \rlat} \fuscoeff{\Mod{M}}{\Mod{N}}{\sfmod{\fcwt{}''}{\arel{}{[\mu'']}}} \Gr{\omega(\sfmod{\fcwt{}''}{\arel{}{[\mu'']}})} \, \dd [\mu''].
\end{equation}
The upshot of this is that, if desired, one may restrict to Grothendieck fusion rules in which only one of the two $\themm$-modules being fused is $\grp{D}_6$-twisted.  We note that this $\grp{D}_6$-invariance of Grothendieck fusion rules is expected because genuine fusion rules are known to satisfy a similar invariance property \cite{AdaFus19}.

There is a slightly different invariance formula for Grothendieck fusion rules and spectral flow, again expected because of the known version for genuine fusion rules \cite{LiPhy97}.  To deduce it, note that \eqref{eq:stcharSmat} gives
\begin{equation}
	\smat{\fcwt{},\fcwt{}'''}{[\mu],[\mu''']} = \ee^{2 \pi \ii \brac[\big]{3 \killing{\fcwt{}}{\fcwt{}'''}/2 - \pair{\mu'''}{\fcwt{}}}} \smat{0,\fcwt{}'''}{[\mu],[\mu''']}.
\end{equation}
In the formula for the standard Grothendieck fusion coefficients, we can extract these phases from the unprimed and singly primed S-matrix elements and then absorb them into the doubly primed one.  The result is
\begin{equation}
	\fuscoeff{\sfmod{\fcwt{}}{\arel{}{[\mu]}}}{\sfmod{\fcwt{}'}{\arel{}{[\mu']}}}{\sfmod{\fcwt{}''}{\arel{}{[\mu'']}}}
	= \fuscoeff{\arel{}{[\mu]}}{\arel{}{[\mu']}}{\sfmod{\fcwt{}''-\fcwt{}'-\fcwt{}}{\arel{}{[\mu'']}}},
\end{equation}
where the minus signs arise because of complex conjugation.  As above, this implies that the Grothendieck fusion rules \eqref{eq:grfr} continue to hold upon applying spectral flow, as long as the product of the spectral flows on the \lhs\ matches that on the \rhs.  Thus,
\begin{equation} \label{eq:grsf}
	\Gr{\sfmod{\fcwt{}}{\Mod{M}}} \Grfuse \Gr{\sfmod{\fcwt{}'}{\Mod{N}}} = \sum_{\fcwt{}'' \in \cwlat} \int_{\rdcsub / \rlat} \fuscoeff{\Mod{M}}{\Mod{N}}{\sfmod{\fcwt{}''}{\arel{}{[\mu'']}}} \Gr{\sfmod{\fcwt{}+\fcwt{}'+\fcwt{}''}{\arel{}{[\mu'']}}} \, \dd [\mu''].
\end{equation}
If desired, one may therefore restrict to Grothendieck fusion rules in which there are no explicit spectral flow twists on the \lhs.

Let us now take $\Mod{M}$ and $\Mod{N}$ to be \rhw\ $\themm$-modules (with real weights).  The Grothendieck fusion coefficients are relatively straightforward to compute:
\begin{align}
	&\fuscoeff{\arel{}{[\mu]}}{\arel{}{[\mu']}}{\sfmod{\fcwt{}''}{\arel{}{[\mu'']}}}
	= \sum_{\fcwt{}''' \in \cwlat} \int_{\rdcsub / \rlat} \frac{\smat{0, \fcwt{}'''}{[\mu], [\mu''']} \smat{0, \fcwt{}'''}{[\mu'], [\mu''']} \brac*{\smat{\fcwt{}'', \fcwt{}'''}{[\mu''], [\mu''']}}^*}{\aasmat{0, \fcwt{}'''}{0, [\mu''']}} \, \dd [\mu'''] \notag \\
	&\quad =\sum_{\fcwt{}''' \in \cwlat} \ee^{2 \pi \ii \pair{-3(\fcwt{}'')^*/2 - \mu-\mu'+\mu''}{\fcwt{}'''}} \notag \\
	&\quad\quad \cdot \int_{\rdcsub / \rlat} \ee^{2 \pi \ii \pair{\mu'''}{\fcwt{}''}} \, 2 \brac[\Big]{1 + \cos \brac[\big]{2 \pi \pair{\mu'''}{\fcwt{1}}} + \cos \brac[\big]{2 \pi \pair{\mu'''}{\fcwt{2}}} + \cos \brac[\big]{2 \pi \pair{\mu'''}{\fcwt{1}-\fcwt{2}}}} \, \dd [\mu'''] \notag \\
	&\qquad = \delta \brac[\big]{[\mu'' - \mu' - \mu - \tfrac{3}{2} (\fcwt{}'')^*]}
	\brac*{2 \delta_{\fcwt{}'',0} + \delta_{\fcwt{}'',\fcwt{1}} + \delta_{\fcwt{}'',-\fcwt{1}} + \delta_{\fcwt{}'',\fcwt{2}} + \delta_{\fcwt{}'',-\fcwt{2}} + \delta_{\fcwt{}'',\fcwt{3}} + \delta_{\fcwt{}'',-\fcwt{3}}}.
\end{align}
Here, we have set $\fcwt{3} = \fcwt{2}-\fcwt{1}$ for brevity.  If we similarly set $\fwt{3} = \fwt{2}-\fwt{1}$, then the relaxed-by-relaxed Grothendieck fusion rule takes the form
\begin{equation} \label{eq:grfrRR}
	\Gr{\arel{}{[\mu]}} \Grfuse \Gr{\arel{}{[\mu']}} =
	\begin{tikzpicture}[xscale=2,yscale=1.75,baseline=(0)]
		\node (0) at (0:0) {$2 \Gr{\arel{}{[\mu+\mu']}}$};
		\node at (30:1) {$\Gr{\arel{(\fcwt{1})}{[\mu+\mu'+3\fwt{1}/2]}}$};
		\node at (30:-1) {$\Gr{\arel{(-\fcwt{1})}{[\mu+\mu'+3\fwt{1}/2]}}$};
		\node at (90:1) {$\Gr{\arel{(\fcwt{2})}{[\mu+\mu'+3\fwt{2}/2]}}$};
		\node at (90:-1) {$\Gr{\arel{(-\fcwt{2})}{[\mu+\mu'+3\fwt{2}/2]}}$};
		\node at (150:1) {$\Gr{\arel{(\fcwt{3})}{[\mu+\mu'+3\fwt{3}/2]}}$};
		\node at (150:-1) {$\Gr{\arel{(-\fcwt{3})}{[\mu+\mu'+3\fwt{3}/2]}}$};
	\end{tikzpicture}
	,
\end{equation}
where we have introduced a slightly more compact notation $\arel{(\fcwt{})}{[\mu]} = \sfmod{\fcwt{}}{\arel{}{[\mu]}}$ (the parentheses in the superscript are meant to distinguish this notation from $\arel{\lambda}{[\mu]}$), arranged the summands on the \rhs\ according to their spectral flow coweights and noted that $3\fwt{i} \in \rlat$, $i=1,2,3$.  Applying \eqref{eq:grsf} now gives the standard Grothendieck fusion rules for $\themm$.

This convention for arranging summands makes it natural to ask if the appearance of the weights of the adjoint module of $\slthree$ in \eqref{eq:grfrRR} is meaningful.  A similar phenomenon occurs for the relaxed-by-relaxed Grothendieck fusion rules for the $\sltwo$ minimal model $\slnminmod{1}{2}{3}$ of level $-\frac{4}{3}$, see \cite[Eq.~(5.21)]{CreMod12}.  For these minimal models, the general rules \cite{CreMod13} show that other levels do not reproduce the adjoint weights, so we expect that $\themm$ is likewise the only $\slthree$ minimal model with this property.

Nevertheless, the appearance of these weights is striking and we conjecture that this is in fact a ``low-rank coincidence'' resulting from the fact that $\slnminmod{1}{2}{3}$ and $\themm$ coincide with two of the \voas\ $B_Q(p)$ introduced in \cite{CreCos13,CreLog18}.  Here, $Q$ is an ADE-type root lattice and $p \in \ZZ_{\ge2}$.  In fact, one has $\slnminmod{1}{2}{3} \cong B_{A_1}(3)$ \cite{CreCos13} and $\themm \cong B_{A_2}(2)$ \cite{AdaRea14}.  These $B_Q(p)$-algebras are related to (higher-rank) singlet algebras by a parafermionic coset construction \cite{CreCos13,AdaPar20} and thence to triplet algebras.  (This is discussed in more detail in the companion paper \cite{CreKaz21}.)  We mention that the automorphism groups of the corresponding triplet algebras are $\SLG{PSL}{2}(\CC)$ and $\SLG{PSL}{3}(\CC)$, respectively, which may be ultimately responsible for the appearance of adjoint weights in \eqref{eq:grfrRR}.\footnote{We thank an anonymous referee for this suggestion.}

Next, we consider the semirelaxed-by-relaxed Grothendieck fusion rules.  As the $\grp{D}_6$-twist can be chosen to act on the relaxed module (and thereby absorbed), we may restrict to the untwisted result.  Instead of applying the standard Verlinde formula \eqref{eq:svf} directly, we will compute the Grothendieck fusion rule directly by combining the rules \eqref{eq:grfrRR} with
\begin{equation} \label{eq:grS}
	\Gr{\asemi{}{[\mu]}} = \sum_{n=0}^{\infty} \brac*{\Gr{\arel{(-2n\fcwt{2})}{[\mu]}} - \Gr{\arel{(-(2n+1)\fcwt{2})}{[\mu-\sroot{1}/2]}}},
\end{equation}
the latter being a consequence of \cref{lem:atypchS} and the fact that standard characters are linearly independent.  The result benefits from many cancellations:
\begin{align} \label{eq:grfrRS}
	\Gr{\asemi{}{[\mu]}} \Grfuse \Gr{\arel{}{[\mu']}} &= \sum_{n=0}^{\infty} \left(
	\begin{tikzpicture}[xscale=2,yscale=1.75,baseline=(0)]
		\node (0) at (0:0) {$2 \Gr{\arel{(-2n\fcwt{2})}{[\mu+\mu']}}$};
		\node at (30:1) {$\Gr{\arel{(\fcwt{1}-2n\fcwt{2})}{[\mu+\mu'+3\fwt{1}/2]}}$};
		\node at (30:-1) {$\Gr{\arel{(-\fcwt{1}-2n\fcwt{2})}{[\mu+\mu'+3\fwt{1}/2]}}$};
		\node at (90:1) {$\Gr{\arel{(-(2n-1)\fcwt{2})}{[\mu+\mu'+3\fwt{2}/2]}}$};
		\node at (90:-1) {$\Gr{\arel{(-(2n+1)\fcwt{2})}{[\mu+\mu'+3\fwt{2}/2]}}$};
		\node at (150:1) {$\Gr{\arel{(\fcwt{3}-2n\fcwt{2})}{[\mu+\mu'+3\fwt{3}/2]}}$};
		\node at (150:-1) {$\Gr{\arel{(-\fcwt{3}-2n\fcwt{2})}{[\mu+\mu'+3\fwt{3}/2]}}$};
	\end{tikzpicture}
	\ - \
	\begin{tikzpicture}[xscale=2,yscale=1.75,baseline=(0)]
		\node (0) at (0:0) {$2 \Gr{\arel{(-(2n+1)\fcwt{2})}{[\mu+\mu'+3\fwt{2}/2]}}$};
		\node at (30:1) {$\Gr{\arel{(-\fcwt{3}-2n\fcwt{2})}{[\mu+\mu'+3\fwt{3}/2]}}$};
		\node at (30:-1) {$\Gr{\arel{(\fcwt{3}-2(n+1)\fcwt{2})}{[\mu+\mu'+3\fwt{3}/2]}}$};
		\node at (90:1) {$\Gr{\arel{(-2n\fcwt{2})}{[\mu+\mu']}}$};
		\node at (90:-1) {$\Gr{\arel{(-2(n+1)\fcwt{2})}{[\mu+\mu']}}$};
		\node at (150:1) {$\Gr{\arel{(-\fcwt{1}-2n\fcwt{2})}{[\mu+\mu'+3\fwt{1}/2]}}$};
		\node at (150:-1) {$\Gr{\arel{(\fcwt{1}-2(n+1)\fcwt{2})}{[\mu+\mu'+3\fwt{1}/2]}}$};
	\end{tikzpicture}
	\right) \notag \\
	&=
	\begin{tikzpicture}[xscale=2,yscale=1.75,baseline=(0)]
		\node (0) at (90:0.5) {};
		\node at (0:0) {$\Gr{\arel{}{[\mu+\mu']}}$};
		\node at (30:1) {$\Gr{\arel{(\fcwt{1})}{[\mu+\mu'+3\fwt{1}/2]}}$};
		\node at (90:1) {$\Gr{\arel{(\fcwt{2})}{[\mu+\mu'+3\fwt{2}/2]}}$};
		\node at (150:1) {$\Gr{\arel{(\fcwt{3})}{[\mu+\mu'+3\fwt{3}/2]}}$};
	\end{tikzpicture}
	.
\end{align}

The highest-weight-by-relaxed rules follow similarly, though the only interesting rule is the one involving $\airr{-\wvec/2}$ (since $\airr{-3\fwt{1}/2}$ and $\airr{-3\fwt{2}/2}$ are spectral flows of the vacuum module $\airr{0}$).\footnote{That $\airr{0}$ is the Grothendieck fusion unit follows directly from the standard Verlinde formula and the fact that the standard S-matrix is unitary.}  The Grothendieck version of \eqref{eq:Sdeg2} gives
\begin{equation} \label{eq:grL}
	\Gr{\airr{-\wvec/2}} = \Gr{\asemi{}{[-\wvec/2]}} - \Gr{\airr{-3\fwt{1}/2}^{(-2\fcwt{1}+\fcwt{2})}} = \Gr{\asemi{}{[-\wvec/2]}} - \Gr{\airr{0}^{(\fcwt{3})}},
\end{equation}
from which we obtain
\begin{align} \label{eq:grfrLR}
	\Gr{\airr{-\wvec/2}} \Grfuse \Gr{\arel{}{[\mu']}} &= \Gr{\asemi{}{[-\wvec/2]}} \Grfuse \Gr{\arel{}{[\mu']}} - \Gr{\airr{0}^{(\fcwt{3})}} \Grfuse \Gr{\arel{}{[\mu']}} = \Gr{\asemi{}{[-\wvec/2]}} \Grfuse \Gr{\arel{}{[\mu']}} - \Gr{\arel{(\fcwt{3})}{[\mu']}} \notag \\
	&=
	\begin{tikzpicture}[xscale=2,yscale=1.75,baseline=(0)]
		\node (0) at (90:0.5) {};
		\node at (0:0) {$\Gr{\arel{}{[\mu'+3\fwt{3}/2]}}$};
		\node at (30:1) {$\Gr{\arel{(\fcwt{1})}{[\mu'+3\fwt{2}/2]}}$};
		\node at (90:1) {$\Gr{\arel{(\fcwt{2})}{[\mu'+3\fwt{1}/2]}}$};
	\end{tikzpicture}
	=
	\begin{tikzpicture}[xscale=2,yscale=1.75,baseline=(0)]
		\node (0) at (90:0.5) {};
		\node at (0:0) {$\Gr{\arel{}{[\mu'+\sroot{3}/2]}}$};
		\node at (30:1) {$\Gr{\arel{(\fcwt{1})}{[\mu'+\sroot{1}/2]}}$};
		\node at (90:1) {$\Gr{\arel{(\fcwt{2})}{[\mu'+\sroot{2}/2]}}$};
	\end{tikzpicture}
	.
\end{align}
Here, we have used \eqref{eq:grfrRS} and noted that $-\frac{1}{2} \wvec = \frac{3}{2} \fwt{3} \bmod{\rlat}$, while $\frac{3}{2} \fwt{i} = \frac{1}{2} \sroot{\dynkmod{i}}$ for $i=1,2,3$.

The semirelaxed-by-semirelaxed rules require one further insight.  Combining \eqref{eq:grS} and \eqref{eq:grfrRS} gives
\begin{align}
	&\Gr{\asemi{}{[\mu]}} \Grfuse \Gr{\asemi{}{[\mu']}} \notag \\
	&= \sum_{n=0}^{\infty} \left(
	\begin{tikzpicture}[xscale=2,yscale=1.75,baseline=(0)]
		\node (0) at (90:0.5) {};
		\node at (0:0) {$\Gr{\arel{(-2n\fcwt{})}{[\mu+\mu']}}$};
		\node at (30:1) {$\Gr{\arel{(\fcwt{1}-2n\fcwt{})}{[\mu+\mu'+3\fwt{1}/2]}}$};
		\node at (90:1) {$\Gr{\arel{(-(2n-1)\fcwt{2})}{[\mu+\mu'+3\fwt{2}/2]}}$};
		\node at (150:1) {$\Gr{\arel{(\fcwt{3}-2n\fcwt{})}{[\mu+\mu'+3\fwt{3}/2]}}$};
	\end{tikzpicture}
	\ - \
	\begin{tikzpicture}[xscale=2,yscale=1.75,baseline=(0)]
		\node (0) at (90:0.5) {};
		\node at (0:0) {$\Gr{\arel{(-(2n+1)\fcwt{2})}{[\mu+\mu'+3\fwt{2}/2]}}$};
		\node at (30:1) {$\Gr{\arel{(\fcwt{1}-(2n+1)\fcwt{2})}{[\mu+\mu'+3\fwt{1}/2-\sroot{1}/2]}}$};
		\node at (90:1) {$\Gr{\arel{(-2n\fcwt{2})}{[\mu+\mu']}}$};
		\node at (150:1) {$\Gr{\arel{(\fcwt{3}-(2n+1)\fcwt{2})}{[\mu+\mu'+3\fwt{3}/2-\sroot{1}/2]}}$};
	\end{tikzpicture}
	\right),
\end{align}
where we recall that $\frac{3}{2} \fwt{2} = \frac{1}{2} \sroot{1} \bmod{\rlat}$.  The bottom terms on the left cancel with the top terms on the right and a similar near-cancellation of the top-left and bottom-right terms leaves only $\Gr{\arel{(\fcwt{2})}{[\mu+\mu'+3\fwt{2}/2]}}$.  Since
\begin{equation}
	\mu, \mu' \in -\tfrac{3}{2} \fwt{1} + \RR \sroot{1} \quad \Rightarrow \quad
	\mu + \mu' + \tfrac{3}{2} \fwt{2} + \sroot{} \notin -\tfrac{3}{2} \fwt{1} + \RR \sroot{1},
\end{equation}
for any $\sroot{} \in \rlat$, this standard module does not degenerate.  The remaining terms however combine to give spectral flows of semirelaxed modules because $\mu + \mu' + \frac{3}{2} \fwt{1}$ and $\mu + \mu' + \frac{3}{2} \fwt{3}$ do belong to $-\frac{3}{2} \fwt{1} + \RR \sroot{1}$.  The Grothendieck fusion rule is thus
\begin{equation} \label{eq:grfrSS}
	\Gr{\asemi{}{[\mu]}} \Grfuse \Gr{\asemi{}{[\mu']}} =
	\begin{tikzpicture}[xscale=2,yscale=1.75,baseline=(0)]
		\node (0) at (90:0.67) {};
		\node at (30:1) {$\Gr{\asemi{(\fcwt{1})}{[\mu+\mu'+3\fwt{1}/2]}}$};
		\node at (90:1) {$\Gr{\arel{(\fcwt{2})}{[\mu+\mu'+3\fwt{2}/2]}}$};
		\node at (150:1) {$\Gr{\asemi{(\fcwt{3})}{[\mu+\mu'+3\fwt{3}/2]}}$};
	\end{tikzpicture}
	.
\end{equation}

The highest-weight-by-semirelaxed rule is now easily deduced from \eqref{eq:grL} and \eqref{eq:grfrSS}:
\begin{equation} \label{eq:grfrLS}
	\Gr{\airr{-\wvec/2}} \Grfuse \Gr{\asemi{}{[\mu']}} =
	\begin{tikzpicture}[xscale=2,yscale=1.75,baseline=(0)]
		\node (0) at (90:0.67) {};
		\node at (30:1) {$\Gr{\asemi{(\fcwt{1})}{[\mu'+\sroot{1}/2]}}$};
		\node at (90:1) {$\Gr{\arel{(\fcwt{2})}{[\mu'+\sroot{2}/2]}}$};
	\end{tikzpicture}
	.
\end{equation}
This, along with the spectral flow identifications \eqref{eq:sfident32} and the degeneration formulae \eqref{eq:degS32} and \eqref{eq:degRS32}, implies the highest-weight-by-highest-weight rule:
\begin{align}
	\Gr{\airr{-\wvec/2}} \Grfuse \Gr{\airr{-\wvec/2}}
	&= \Gr{\airr{-\wvec/2}} \Grfuse \Gr{\asemi{}{-\wvec/2}} - \Gr{\airr{\wvec/2}^{(\fcwt{3})}}
	= \Gr{\asemi{(\fcwt{1})}{-3\fwt{1}/2}} + \Gr{\arel{(\fcwt{2})}{-3\fwt{2}/2}} - \Gr{\airr{\wvec/2}^{(\fcwt{3})}} \notag \\
	&= \Gr{\sfmod{\fcwt{2}}{\airr{-3\fwt{2}/2}}} + \Gr{\sfaut{\fcwt{2}} \conjmod{\airr{-3\fwt{2}/2}}} + \Gr{\sfaut{\fcwt{2}} \weylmod{2}{\airr{-\wvec/2}}} + \Gr{\sfaut{\fcwt{2}} \conjaut \weylmod{2}{\airr{-\wvec/2}}} \notag \\
	&\qquad + \Gr{\sfmod{\fcwt{1}}{\airr{-3\fwt{1}/2}}} + \Gr{\sfaut{\fcwt{1}} \weylmod{2}{\airr{-\wvec/2}}} - \Gr{\airr{\wvec/2}^{(\fcwt{3})}} \notag \\
	&= \Gr{\airr{-3\fwt{2}/2}^{(\fcwt{2})}} + \Gr{\airr{0}} + \Gr{\airr{-3\fwt{1}/2}^{(\fcwt{1})}} + \Gr{\sfaut{\fcwt{1}} \weylmod{1}{\airr{-\wvec/2}}} + \Gr{\sfaut{\fcwt{2}} \weylmod{2}{\airr{-\wvec/2}}}.
\end{align}
Arranging the summands by spectral flow (with conjugation corresponding to negation) and simplifying, the Grothendieck fusion rule is
\begin{equation} \label{eq:grfrLL}
	\Gr{\airr{-\wvec/2}} \Grfuse \Gr{\airr{-\wvec/2}} =
	\begin{tikzpicture}[scale=1.5,baseline=(0)]
		\node at (0:0) {$\Gr{\airr{0}}$};
		\node (0) at (30:2) {$\Gr{\airr{0}^{(2\fcwt{1})}}$};
		\node at (90:2) {$\Gr{\airr{0}^{(2\fcwt{2})}}$};
		\node at (60:1.732) {$2\Gr{\conjmod{\airr{-\wvec/2}^{(-\fcwt{1}-\fcwt{2})}}}$};
	\end{tikzpicture}
	.
\end{equation}

The Grothendieck fusion rules presented above are not quite complete because there is no relaxed module on the \lhs\ of \eqref{eq:grfrSS}, \eqref{eq:grfrLS} or \eqref{eq:grfrLL} to absorb a $\grp{D}_6$-twist.  As per \eqref{eq:grD6}, one of the modules in the semirelaxed-by-semirelaxed rule \eqref{eq:grfrSS} should be twisted by $\omega \in \grp{D}_6$.  However, $\weylaut{1}$-twists are dealt with by \eqref{eq:Wident32} and $\conjaut$-twists by \eqref{eq:sfident32}, leaving only $\weylaut{2}$ and $\weylaut{1} \weylaut{2}$.  Using \eqref{eq:atypchSD6} and \eqref{eq:grfrRS}, we deduce that
\begin{equation} \label{eq:grfrSSD6}
	\begin{aligned}
		\Gr{\asemi{}{[\mu]}} \Grfuse \Gr{\weylmod{2}{\asemi{}{[\mu']}}}
		&= \Gr{\arel{}{[\mu+\weylmod{2}{\mu'}]}} + \Gr{\arel{(\fcwt{1})}{[\mu+\weylmod{2}{\mu'}+3\fwt{1}/2]}}, \\
		\Gr{\asemi{}{[\mu]}} \Grfuse \Gr{\weylaut{1} \weylmod{2}{\asemi{}{[\mu']}}}
		&= \Gr{\arel{}{[\mu+\weylaut{1}\weylmod{2}{\mu'}]}} + \Gr{\arel{(\fcwt{3})}{[\mu+\weylaut{1}\weylmod{2}{\mu'}+3\fwt{3}/2]}}.
	\end{aligned}
\end{equation}
The highest-weight-by-semirelaxed rule \eqref{eq:grfrLS} is actually sufficiently general because \eqref{eq:ident32} implies that
\begin{equation}
	\weylmod{2}{\airr{-\wvec/2}}
	\cong \sfaut{\fcwt{1}} \weylmod{3}{\airr{-\wvec/2}}
	\cong \sfaut{\fcwt{1}} \conjmod{\airr{-\wvec/2}}
	\cong \conjaut \sfmod{-\fcwt{1}}{\airr{-\wvec/2}},
\end{equation}
hence \eqref{eq:grD6} and \eqref{eq:grsf} give
\begin{align}
	\Gr{\airr{-\wvec/2}} \Grfuse \Gr{\weylmod{2}{\asemi{}{[\mu']}}}
	&= \weylmod{2}{\Gr{\weylmod{2}{\airr{-\wvec/2}}} \Grfuse \Gr{\asemi{}{[\mu']}}}
	= \weylaut{2} \conjmod{\Gr{\sfmod{-\fcwt{1}}{\airr{-\wvec/2}}} \Grfuse \Gr{\conjmod{\asemi{}{[\mu']}}}} \notag \\
	&= \weylaut{2} \conjaut \sfmod{-\fcwt{1}-\fcwt{2}}{\Gr{\airr{-\wvec/2}} \Grfuse \Gr{\asemi{}{[-\mu'-3\fwt{2}/2]}}}
\end{align}
(and a similar computation gives the result when $\weylaut{2}$ is replaced by $\weylaut{1} \weylaut{2}$).

It remains to generalise the highest-weight-by-highest-weight Grothendieck fusion rule \eqref{eq:grfrLL} by adding a twist.  In this case, it is easy to see from \eqref{eq:sfident32} that it suffices to give the result when the twist is any one of the reflections in $\grp{D}_6$ except $\dynkaut$.  We shall therefore compute
\begin{align} \label{eq:grfrLLD6}
	\Gr{\airr{-\wvec/2}} \Grfuse \Gr{\conjmod{\airr{-\wvec/2}}}
	&= \Gr{\airr{-\wvec/2}} \Grfuse \Gr{\conjmod{\asemi{}{[-\wvec/2]}}} - \Gr{\airr{-\wvec/2}} \Grfuse \Gr{\conjaut \sfmod{\fcwt{3}}{\airr{0}}} \notag \\
	&= \Gr{\airr{-\wvec/2}} \Grfuse \Gr{\asemi{(-\fcwt{2})}{[-3\fwt{1}/2]}} - \Gr{\airr{-\wvec/2}^{(-\fcwt{3})}}
	= \Gr{\arel{}{[0]}} + \Gr{\asemi{(-\fcwt{3})}{[-\wvec/2]}} - \Gr{\airr{-\wvec/2}^{(-\fcwt{3})}} \notag \\
	&= \Gr{\arel{}{[0]}} + \Gr{\sfaut{-\fcwt{3}} \weylmod{1}{\airr{-3\fwt{1}/2}}} = \Gr{\arel{}{[0]}} + \Gr{\airr{0}},
\end{align}
noting with some satisfaction that the vacuum module indeed appears in the Grothendieck fusion rule for this irreducible and its conjugate, as expected.

For completeness, we present the Grothendieck fusion rules for the other irreducibles by their conjugates.  The semirelaxed rule is easily seen to be
\begin{equation}
	\Gr{\asemi{}{[\mu]}} \Grfuse \Gr{\conjmod{\asemi{}{[\mu]}}} = \Gr{\arel{}{[0]}} + 2 \Gr{\airr{0}} + \Gr{\airr{-\wvec/2}^{(-\fcwt{3})}} + \Gr{\conjmod{\airr{-\wvec/2}^{(-\fcwt{3})}}}.
\end{equation}
Its relaxed counterpart is unsurprisingly more complicated:
\begin{align}
	\Gr{\arel{}{[\mu]}} \Grfuse &\Gr{\conjmod{\arel{}{[\mu]}}}
	= 2 \Gr{\arel{}{[0]}} + 6 \Gr{\airr{0}} + \Gr{\airr{0}^{(2\fcwt{1})}} + \Gr{\airr{0}^{(-2\fcwt{1})}} + \Gr{\airr{0}^{(2\fcwt{2})}} + \Gr{\airr{0}^{(-2\fcwt{2})}} + \Gr{\airr{0}^{(2\fcwt{3})}} + \Gr{\airr{0}^{(-2\fcwt{3})}} \notag \\
	&+ 2 \brac*{\Gr{\airr{-\wvec/2}^{(\fcwt{1}-\fcwt{2})}} + \Gr{\airr{-\wvec/2}^{(\fcwt{2}-\fcwt{1})}} + \Gr{\airr{-\wvec/2}^{(-\fcwt{1}-\fcwt{2})}}} + 2 \conjaut \brac*{\Gr{\airr{-\wvec/2}^{(\fcwt{1}-\fcwt{2})}} + \Gr{\airr{-\wvec/2}^{(\fcwt{2}-\fcwt{1})}} + \Gr{\airr{-\wvec/2}^{(-\fcwt{1}-\fcwt{2})}}}.
\end{align}
In both cases, the vacuum module appears with multiplicity greater than $1$.  This indicates that the corresponding genuine fusion products (meaning not the Grothendieck ones) are not completely reducible.

We shall not pursue the structures of these reducible but indecomposable $\themm$-module here, though their existence is clear.  Instead, we refer to a companion paper \cite{CreKaz21} in which conjectures for these structures are presented.  These indecomposables also include modules that we conjecture are the projective covers and injective hulls of the irreducible modules in $\awcat{3,2}$.  The composition factors of all these indecomposables are of course determined by the Grothendieck fusion rules calculated here.

A closely related remark is that the Grothendieck fusion ring presented here has a one-dimensional representation which is constant on the $\grp{D}_6$- and spectral flow orbits, but otherwise takes the values
\begin{equation}
	\Gr{\arel{}{[\mu]}} \mapsto 8, \quad \Gr{\asemi{}{[\mu]}} \mapsto 4, \quad \Gr{\airr{-\wvec/2}} \mapsto 3 \quad \text{and} \quad \Gr{\airr{0}} \mapsto 1.
\end{equation}
This is also addressed in the companion paper \cite{CreKaz21} where the existence of a \kl-type correspondence is discussed.  This correspondence takes the form of a (conjectural) tensor equivalence between the category $\awcat{3,2}$ of weight $\themm$-modules (with \fdim\ weight spaces) and an appropriate modification of the category of \fdim\ modules over a certain quantum group $\overline{U}{}^H_q(\slthree)$ at $q=\ii$.  The values of the above representation are then just the dimensions of the corresponding irreducible quantum group modules.

%\subsection*{Conflict of interest and data availability statements}

%On behalf of all authors, the corresponding author states that there is no conflict of interest.  Data sharing is not applicable to this article as no datasets were generated or analysed during the current study.

\flushleft
%\bibliography{sl3}
%\bibliographystyle{unsrt}
\providecommand{\opp}[2]{\textsf{arXiv:\mbox{#2}/#1}}
\providecommand{\pp}[2]{\textsf{arXiv:#1 [\mbox{#2}]}}

\end{document}